\newtheorem{thm}{Theorem}[section]
 \newtheorem{cor}[thm]{Corollary}
 \newtheorem{lem}[thm]{Lemma}
 \newtheorem{prop}[thm]{Proposition}
 \theoremstyle{definition}
 \newtheorem{df}[thm]{Definition}
 \theoremstyle{remark}
 \newtheorem{rem}[thm]{Remark}
 \numberwithin{equation}{section}
\def\be#1 {\begin{equation} \label{#1}}
\newcommand{\ee}{\end{equation}}
\def\sqw{\hbox{\rlap{\leavevmode\raise.3ex\hbox{$\sqcap$}}$%
\sqcup$}}
\def\findem{\ifmmode\sqw\else{\ifhmode\unskip\fi\nobreak\hfil
\penalty50\hskip1em\null\nobreak\hfil\sqw
\parfillskip=0pt\finalhyphendemerits=0\endgraf}\fi}
\newcommand{\R}{\mathbb R}
\newcommand\<{\langle}
\renewcommand\>{\rangle}
\newcommand{\mc}{\mathcal}
\newcommand{\mbf}{\mathbf}
\DeclareMathOperator{\supp}{supp}
\DeclareMathOperator{\dist}{dist}
\newcommand{\bw}{\mbf w}
\newcommand{\bg}{\mathbf g}
\newcommand{\ba}{\mathbf a}
\newcommand{\bff}{\mathbf f}
\newcommand{\cM}{\mathcal M}
\newcommand{\cK}{\mathcal K}
\newcommand{\X}{\mathcal H}
\newcommand{\cZ}{\mathcal Z}
\newcommand{\brho}{\pmb\rho}
\title[A quasilinear KdV equation with degenerate dispersion]{Existence and uniqueness of solutions for a quasilinear KdV equation with degenerate dispersion}
\keywords{Degenerate dispersion, KdV, compacton, local well-posedness}
\author{Pierre Germain}
\email{pgermain@cims.nyu.edu}
\address{Courant Institute of Mathematical Sciences, New York University\\
251 Mercer Street, New York, NY 10012, USA}
\thanks{P.G. was supported by the NSF grant DMS-1501019.}
\author{Benjamin Harrop-Griffiths}
\email{benjamin.harrop-griffiths@cims.nyu.edu}
\address{Courant Institute of Mathematical Sciences, New York University\\
251 Mercer Street, New York, NY 10012, USA}
\thanks{B.H.-G. was supported by a Junior Fellow award from the Simons Foundation.}
\author{Jeremy L.~Marzuola}
\email{marzuola@math.unc.edu}
\address{Mathematics Department, University of North Carolina \\
Phillips Hall, Chapel Hill, NC 27599, USA}
\thanks{J.L.M. was supported in part by U.S. NSF Grants DMS--1312874 and DMS-1352353.}
\thanks{
The authors would like to thank Philip Rosenau for several informative discussions about degenerate dispersive equations and compactons, and for pointing out the elegant waiting time argument of~\cite{2017arXiv170903322Z}.
}
\begin{document}

\begin{abstract}
We consider a quasilinear KdV equation that admits compactly supported traveling wave solutions (compactons). This model is one of the most straightforward instances of degenerate dispersion, a phenomenon that appears in a variety of physical settings as diverse as sedimentation, magma dynamics and shallow water waves. We prove the existence and uniqueness of solutions with sufficiently smooth, spatially localized initial data.
\end{abstract}

\maketitle

\section{Introduction}\label{sect:Intro}

\subsection{The equation}

In this article we consider the existence and uniqueness of real-valued solutions \(u\colon \R_t\times \R_x\rightarrow\R\) of the quasilinear Korteweg-de Vries (KdV) equation,
\begin{equation}
\label{QdV}
\begin{cases}
u_t + \left( u(uu_x)_x + \mu u^3 \right)_x = 0,\vspace{0.1cm}\\
u(0) = u_0.
\end{cases}
\end{equation}
Here, the parameter $\mu$ can take the values $+1$ (focusing case), $-1$ (defocusing case), or $0$ (neutral case). This model appeared in~\cite{MR1376975} as a Hamiltonian variation of the degenerate dispersive models of Rosenau and Hyman~\cite{HR}.

The equation \eqref{QdV} may be formally derived from the Hamiltonian,
\begin{equation}\label{Hamiltonian}
H(u) := \frac{1}{2} \int |uu_x|^2\,dx - \frac \mu 4 \int u^4\,dx,
\end{equation}
given the symplectic form \(\omega(u,v) = \int u\cdot\partial_x^{-1}v\,dx\). In addition to the usual translation and reflection symmetries, the equation \eqref{QdV} enjoys the scaling symmetry,
\begin{equation}\label{Scaling}
u(t,x) \mapsto \sqrt{\lambda}u(\lambda t, x),\qquad \lambda>0,
\end{equation}
which makes it $L^2$-subcritical.

Formally, the flow of the equation~\eqref{QdV} conserves, besides the Hamiltonian, the mass $M$ and the momentum $J$ defined by
\[
M(u) := \int u^2 \,dx, \qquad J(u) := \int u \,dx.
\]
Finally, denoting $u_+ = \max(0,u)$, the positive momentum
$$
J_+(u) := \int u_+ \,dx,
$$
is also conserved for smooth solutions: indeed,
$$
\frac{d}{dt} \int u_+ \,dx = - \int \frac{u_+}{u} \left( u(uu_x)_x + \mu u^3 \right)_x \,dx 
= \int \delta(u)\ u_x \left( u(uu_x)_x + \mu u^3 \right)\,dx = 0.
$$

\subsection{Degenerate dispersive equations} 
The equation~\eqref{QdV} is one of the simplest instances of degenerate dispersive equations: the dispersive term is superlinear, so the dispersive effect degenerates as $u \to 0$.

Degenerate dispersive equations occur in the description of a number of physical phenomena. To name a few: sedimentation~\cite{MR1021642,MR2772627}; dynamics of magma~\cite{MR2285103,MR2434917}; granular media~\cite{PORTER2009666, Nesterenko}; shallow water waves with the Camassa-Holm equation~\cite{MR1234453,MR2278406} and Green-Naghdi equations~\cite{2017arXiv171003651L}; liquid crystals with the Hunter-Saxton equation~\cite{MR1135995}; elasticity~\cite{MR1811323}; nonlinear chains dynamics~\cite{MR1928031,DMR}; cosmology~\cite{MR2728788}. More recently, degenerate dispersive equations were found to describe waves propagating on interfaces~\cite{MR2599457,MR2857003} and even to provide a model for weak turbulence~\cite{MR2651381,MR3171091}.

Similar types of degenerate behavior occur in other PDE contexts: gradient flows such as the porous medium equation or the parabolic p-Laplacian flow (see for example the monographs~\cite{MR2286292,MR1230384}); higher order diffusion such as the thin film equation ~\cite{MR2839301,MR3375546,MR3375536,MR3197240,MR3544328}; weakly hyperbolic equations~\cite{MR1378250,MR1668860}, in particular the compressible Euler equations near vacuum~\cite{MR2547977,MR2779087}. Indeed, many of the techniques in this paper owe inspiration to previous work on degenerate parabolic and hyperbolic equations.

\subsection{Compactons} A feature of many degenerate dispersive equations is that they support \textit{compactons}: traveling waves with compact support. This was first emphasized by Rosenau and Hyman~\cite{HR, MR1294558}, who introduced the model \(K(m,n)\) equations,
\begin{equation}\label{KMN}
u_t + (u^m)_x + (u^n)_{xxx} = 0.
\end{equation}
Subsequently, numerous classes of degenerate dispersive equations exhibiting an array of remarkable traveling wave solutions have been introduced and studied. We refer the reader to the forthcoming review article~\cite{RosenauReview} of Rosenau and Zilburg and the papers~\cite{ROSENAU200644,MR2159688,MR2133462,MR2601818,MR3628993,HR,MR1294558} for a more detailed history of these problems and some recent results.

In a recent article~\cite{GHGM} the authors considered the equation \eqref{QdV} in the focusing case $\mu > 0$, and established the variational properties of several families of traveling wave solutions. They actually worked in a more general framework, where $u^4$ is replaced by $|u|^p$, with $p \geq 2$, in the Hamiltonian \(H(u)\); for simplicity we restrict our attention in this paper to the case $p =4$. The explicit compacton solutions are then given by
\[
u(t,x) = \Phi_{B,c}(x - ct),
\]
where either \(B = 0\), \(c>0\) or \(B>0\), \(c\in \R\) and we define
\begin{equation}\label{PhiBc}
\Phi_{B,c}(x) := \sqrt{c + \sqrt{4B + c^2}\cos(\sqrt 2x)},\qquad x\in \left(-x_{B,c},x_{B,c}\right),
\end{equation}
where \(x_{B,c}>0\) is the smallest positive solution to \(\cos(\sqrt 2x) = -\frac c{\sqrt{4B + c^2}}\). We note that in contrast to the usual KdV equation the compactons may travel in either the positive or negative direction (or even remain stationary). Further, the \(B = 0\) compactons are the minimizers of the Hamiltonian for fixed mass (see~\cite[Theorem~1.2]{GHGM}).

\subsection{Degenerate initial data}
Local well-posedness for non-degenerate initial data (say, perturbations of a constant or of  strictly positive traveling wave solutions) may be obtained from the result of Akhunov~\cite{MR2982797}, building on the work of Kenig-Ponce-Vega~\cite{MR2096797,MR1195480,MR1321214} (see also \cite{MR2955206,MR3263550,MR3280026,MR3417686,MR3179690,MR1468372,MR2446185,MR1448822}). Thus, we now restrict our attention to degenerate initial data.

One motivation for considering degenerate initial data is the stability of compactons: we saw that they are variationally stable if $B=0$... but it seems to be very difficult to construct solutions to the equation~\eqref{QdV} (in any sense) for perturbations (in any topology) of the compactons. In other words, this leads to the question:

\medskip
\noindent\textbf{Question:} Do there exist finite mass / energy solutions to \eqref{QdV} for initial data in an open neighborhood of the compacton solutions (in a suitable topology)?
\medskip

The main goal of this article is to take a first step towards answering this question, by proving local existence and uniqueness of solutions to the equation~\eqref{QdV} for suitable initial data, although we note that our initial data does not include the compacton solutions themselves.

In a recent article~\cite{2017arXiv170903322Z} Zilburg and Rosenau show that classical solutions to \eqref{QdV} in the focusing case must lose regularity in finite time, and that sufficiently smooth solutions obey a ``waiting time'' effect analogously to solutions of degenerate parabolic problems outlined in~\cite{MR1230384}. In Section~\ref{sect:Virial} we briefly sketch their argument and show that it may be adapted to the cases \(\mu = 0,-1\). As a consequence, the solutions constructed in the present article, which have fixed support, will either develop a singularity, or start moving in finite time.

The existence of global weak solutions for a degenerate KdV equation similar to~\eqref{QdV} admitting compactons was previously considered by Ambrose and Wright~\cite{MR3091772}.  The same authors considered the existence of classical solutions to another related model~\cite{MR2586373}. However, previous existence proofs have relied on the presence of higher order conservation laws for solutions, giving a priori control of higher order Sobolev norms. In this article we use the toy model \eqref{QdV} to develop a rather more robust proof of the existence of solutions. Indeed, our proof does not explicitly use the Hamiltonian structure of \eqref{QdV}, but rather the existence of a hydrodynamic formulation (see~\eqref{transport}), and hence we expect it may be applied to a much broader class of degenerate dispersive equations. In particular, we expect that our argument can be applied to obtain existence and uniqueness of solutions to the \(K(m,n)\) equations (defined as in \eqref{KMN}) whenever \(m\geq 1\) and \(n\geq 3\).

\subsection{Endpoint decay rates}
\label{subsectionedr}
We will subsequently assume that the initial data \(u_0\) for \eqref{QdV} is the positive square root of a continuous non-negative function \(\rho = u_0^2\) with simply connected set of positivity
\[
I := \left\{x\in \R:\rho(x)>0\right\}.
\]
In this subsection, it is a bounded open interval, \(I = (x_-,x_+)\).

In order to understand the effect of the endpoint decay on the solution, we consider the leading order part of the linearization of \eqref{QdV} about the initial data,
\begin{equation}\label{QuantumLinear}
u_t + \rho u_{xxx} = 0.
\end{equation}
In the semi-classical regime, if $u_0$ is initially localized in phase space around $(x_0,\xi_0)$, the solution \(u\) to the equation \eqref{QuantumLinear} will be localized on the bicharacteristics of the symbol \(a(x,\xi) = - \rho(x)\xi^3\), given by the classical Hamiltonian flow
\begin{equation}\label{HamiltonianODE}
\begin{cases}
\dot x = a_\xi(x,\xi) = - 3\rho(x)\xi^2,\vspace{0.1cm}\\
\dot\xi = - a_x(x,\xi) = \rho_x(x)\xi^3.
\end{cases}
\end{equation}
Suppose that \(I = \R_+\) and \(\rho(x) = x^k\) for \(0<x\ll1\). We may then explicitly solve the equation \eqref{HamiltonianODE} with initial data \((x_0,\xi_0)\) for some \(0<x_0\ll1\) to obtain,
\begin{align*}
x(t) &= \begin{cases}x_0\left(1 + (k - 3)x_0^{k - 1}\xi_0^2t\right)^{\frac3{3 - k}},&\qquad k\neq 3,\vspace{0.1cm}\\x_0e^{-3x_0^2\xi_0^2t},&\qquad k = 3,\end{cases}\\
\xi(t) &= \begin{cases}\xi_0\left(1 + (k - 3)x_0^{k - 1}\xi_0^2t\right)^{\frac k{k - 3}},&\qquad k\neq 3,\vspace{0.1cm}\\\xi_0e^{3x_0^2\xi_0^2t},&\qquad k = 3.\end{cases}
\end{align*}
In particular, whenever \(k<3\) the frequency will blow up in finite time, whereas when \(k\geq 3\) the frequency blows up in infinite time.

These heuristics suggest that solutions to \eqref{QuantumLinear} may form singularities instantaneously whenever \(k< 3\), whereas one can hope for well-defined solutions on sufficiently short time intervals whenever \(k\geq 3\). As a consequence we make the following definition:
\begin{df} After translation, assume that \(0\in I\). We say that \(u_0\) has \emph{supercritical left endpoint decay} if \(\rho = u_0^2\) satisfies
\[
\int_{x_-}^0 \frac1{\rho(s)^{\frac13}}\,ds < \infty,
\]
Similarly, we say that \(\rho\) has \emph{supercritical right endpoint decay} if
\[
\int_0^{x_+} \frac1{\rho(s)^{\frac13}}\,ds < \infty.
\]
\end{df}
Unfortunately our existence result will not hold for all data without supercritical left endpoint decay, but rather initial conditions for which the frequency grows at a sub-exponential rate. As a consequence we make a further definition:
\begin{df}\label{df:Subcrit} After translation, assume that \(0\in I\). We say that \(u_0\) has a \emph{subcritical left endpoint decay} if \(x_- = -\infty\) or \(\rho = u_0^2\) satisfies
\begin{equation}\label{LEP}
\rho(x) = o\left(\dist(x,x_-)^3\right),\qquad x\downarrow x_-.
\end{equation}
Similarly, we say that \(u_0\) has a \emph{subcritical right endpoint decay} if \(x_+ = \infty\) or \(\rho = u_0^2\) satisfies
\begin{equation}\label{REP}
\rho(x) = o\left(\dist(x,x_+)^3\right),\qquad x\uparrow x_+.
\end{equation}
If \(\rho\) has neither subcritical nor supercritical left (respectively right) endpoint decay we say it has \emph{critical left (respectively right) endpoint decay}.
\end{df}

We note that, provided \(\rho\) is sufficiently smooth, the bicharacteristics will leave a small neighborhood of the right endpoint eventually, leading to a smoothing effect near \(x_+\). Consequently, we do not expect the right endpoint decay to significantly affect the existence of solutions, only the left endpoint decay. However, in this article we restrict our attention to the case of subcritical right endpoint decay. The more involved case of critical or supercritical right endpoint decay will be addressed in a future article.

\subsection{Hydrodynamic solutions and the main result}
We observe that the equation \eqref{QdV} may be written in the hydrodynamic form
\begin{equation}
\label{transport}
u_t + (bu)_x = 0, \quad \mbox{where} \quad b = \frac{1}{2} (u^2)_{xx} + \mu u^2.
\end{equation}
The equation \eqref{transport} makes sense whenever \(u\in C^1([0,T]\times\R)\) and \(b\in C([0,T];C^1(\R))\). This motivates the following definition:
\begin{df}\label{def:hydro}
Given \(T>0\) we say that a non-negative function \(u\in C^1([0,T]\times\R)\) is a \emph{hydrodynamic solution} of \eqref{QdV} if \(u^2\in C([0,T];C^3(\R))\) and \(u\) satisfies the equation \eqref{transport} for all \((t,x)\in[0,T]\times \R\).
\end{df}
Evidently, classical solutions to \eqref{QdV}, i.e. \(u\in C([0,T];C^3(\R))\cap C^1([0,T];C(\R))\), are hydrodynamic solutions. However, if either endpoint is finite this definition allows for the case that \(u(t,x)= o(\dist(x,x_\pm)^{\frac32})\) as \(x\rightarrow x_\pm\), which is subcritical endpoint decay in the sense of Definition~\ref{df:Subcrit}, but not a classical solution. We note that this definition is not restricted to solutions that vanish at infinity and hence includes non-degenerate solutions.

%\begin{rem}
%We observe that subcritical endpoint decay, defined as in Definition~\ref{df:Subcrit}, is a necessary condition whenever \(u(t)^2\in C^3(\R)\) has simply connected set of positivity. In particular, we expect this to be the optimal decay rate for a well-defined hydrodynamic solution with simply connected set of positivity. Indeed, if either of the the endpoint decay conditions \eqref{LEP} or \eqref{REP} fail then the velocity \(b(t)\not\in C^1(\R)\).
%\end{rem}

To further motivate this definition we have the following uniqueness result, the proof of which is delayed to Section~\ref{sect:Uniqueness}:
\begin{thm}\label{thm:Unique}
Given non-negative initial data \(u_0\in C_b^1(\R)\) so that \(u_0^2\in C^3_b(\R)\) and \(u_0^{\frac43}\in C^2_b(\R)\) there exists at most one hydrodynamic solution of \eqref{QdV} so that \(u^2\in C([0,T];C^3_b(\R))\) and \(u^{\frac43}\in C([0,T];C^2_b(\R))\).
\end{thm}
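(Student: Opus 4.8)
The plan is to establish uniqueness via an energy estimate for the difference of two hydrodynamic solutions, working with the variable $\rho = u^2$ rather than $u$ itself, since the natural regularity hypotheses are stated in terms of $\rho \in C^3_b$. Suppose $u^{(1)}, u^{(2)}$ are two hydrodynamic solutions with the same initial data, and set $\rho^{(i)} = (u^{(i)})^2$, $b^{(i)} = \tfrac12 (\rho^{(i)})_{xx} + \mu \rho^{(i)}$. From the transport equation $u_t + (bu)_x = 0$ one derives a clean evolution equation for $\rho$: multiplying by $2u$ gives $\rho_t + (b\rho)_x + b\rho_x = 0$, i.e. $\rho_t + 2 b \rho_x + b_x \rho = 0$, or equivalently $\rho_t + (b\rho)_x + b \rho_x = 0$. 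Substituting $b = \tfrac12 \rho_{xx} + \mu \rho$ yields a closed quasilinear equation for $\rho$ alone, of the schematic form $\rho_t + \rho \rho_{xxx} + (\text{lower order in } \rho, \rho_x, \rho_{xx}) = 0$. This is the degenerate dispersive equation \eqref{QuantumLinear} at leading order, with coefficient $\rho \geq 0$.

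The core of the argument is then a weighted $L^2$ (or $L^\infty$-in-$x$) estimate for $\sigma := \rho^{(1)} - \rho^{(2)}$. Writing the equation for $\sigma$, the leading term is $\sigma_t + \tfrac12 \rho^{(1)} \sigma_{xxx} + \dots$, and the dangerous contributions are those where three derivatives land on $\sigma$. Because $\rho^{(1)} \geq 0$, an integration by parts on $\int \rho^{(1)} \sigma_{xxx} \sigma \, dx$ produces $-\tfrac32 \int \rho^{(1)}_x \sigma_{xx} \sigma_x \, dx + \dots$; this is where the hypothesis $u^{4/3} = \rho^{2/3} \in C^2_b$ should enter, controlling $\rho^{1/3}$-weighted second derivatives so that the commutator terms close. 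I would therefore choose a weight adapted to $\rho^{(1)}$ — plausibly estimating $\sigma$ in a norm like $\int (\rho^{(1)})^{-\alpha} \sigma^2$ or a Sobolev-type norm with $\rho$-dependent weights — chosen so that the top-order terms either cancel by antisymmetry or are absorbed using the $C^3_b$ bound on $\rho^{(i)}$ and the $C^2_b$ bound on $\rho^{2/3}$. One then obtains a Gronwall inequality $\tfrac{d}{dt} \|\sigma\|^2 \lesssim \|\sigma\|^2$ (with constants depending on the fixed norms of the two solutions on $[0,T]$), and since $\sigma(0) = 0$ conclude $\sigma \equiv 0$, hence $\rho^{(1)} = \rho^{(2)}$ and $u^{(1)} = u^{(2)}$.

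The main obstacle, as is typical for degenerate dispersive problems, is the loss of derivatives in the top-order term: $\sigma_{xxx}$ appears but $\sigma$ is only controlled in a low norm, and the degeneracy of $\rho^{(1)}$ at the endpoints means one cannot simply divide by the coefficient. Making the weighted energy estimate close will require carefully tracking how $\rho^{(1)}_x$ and $\rho^{(1)}_{xx}$ are controlled by fractional powers of $\rho^{(1)}$ — this is precisely the role of the auxiliary regularity assumption $\rho^{2/3} = u^{4/3} \in C^2_b$, which encodes the right vanishing rate of the derivatives relative to $\rho$ itself (since $(\rho^{2/3})_{xx}$ bounded forces $\rho_{xx}$ to vanish like $\rho^{1/3}$ near a zero of $\rho$, up to lower order). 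A secondary technical point is justifying the integrations by parts and the a priori finiteness of the weighted norms given only $C^k_b$ regularity — one may need to first localize away from $\{\rho = 0\}$ and pass to the limit, or work with the quantity $w = \rho^{1/3}$ or similar, for which the equation is less singular.
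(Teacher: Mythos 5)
Your overall strategy is the same as the paper's: pass to $\rho = u^2$, write a closed equation for $\rho$, and run a weighted $L^2$ energy estimate in which the hypothesis $\rho^{2/3} = u^{4/3} \in C^2_b$ controls the singular coefficients. In fact the paper's weight is exactly $\rho^{-1/3}$ (i.e.\ it estimates $\|w^{-1/3}\bw\|_{L^2}$ for the first variation $\bw$), and the technical input you correctly anticipate --- that boundedness of $(\rho^{2/3})_{xx}$ forces the right vanishing rates of $\rho_x, \rho_{xx}$ relative to powers of $\rho$ --- is made precise via Glaeser's inequality, yielding $\|w^{-2/3}w_x\|_{L^\infty} + \|w^{-1/3}w_{xx}\|_{L^\infty} \lesssim \|w^{2/3}\|_{W^{2,\infty}} + \|w\|_{W^{3,\infty}}$. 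So you have the right idea for the heart of the argument.

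Two issues, one minor and one a genuine gap. Minor: your equation for $\rho$ has a slip. Multiplying $u_t + (bu)_x = 0$ by $2u$ gives $\rho_t + 2b_x\rho + b\rho_x = 0$, i.e.\ $\rho_t + (b\rho)_x + b_x\rho = 0$, not $\rho_t + (b\rho)_x + b\rho_x = 0$; this changes the coefficient of $\rho\rho_{xxx}$ and the lower-order structure, so the integrations by parts must be redone carefully. The genuine gap: you conclude from $\rho^{(1)} = \rho^{(2)}$ that $u^{(1)} = u^{(2)}$, but this does not follow for a $C^1$ function that is only assumed non-negative at $t = 0$; a priori $u^{(i)}$ could become negative in a region where $u_0$ vanishes, since $\rho$ cannot see sign. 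The paper closes this by observing that, with $b$ fixed, the hydrodynamic equation is a linear transport equation in $u$, so $u(t,x) = Y_x(t,x)u_0(Y(t,x))$ where $Y$ is the inverse Lagrangian flow and $Y_x > 0$; hence $u \geq 0$ is propagated and $u = \sqrt\rho$. Without some such argument your reduction to $\rho$-uniqueness does not give $u$-uniqueness.
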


Here we write \(C_b^k = C^k\cap W^{k,\infty}\). The restriction that \(u(t)^{\frac43}\in C^2_b\) is required to rule out the possibility that \(u^2\) vanishes quadratically at an isolated zero and it seems reasonable to expect this may be replaced by assuming that \(u_0^2\in C^3_b(\R)\) has simply connected set of positivity.

Our main result is then (roughly) the following:
\begin{thm}[Rough statement]\label{thm:MainRough}
Let $u_0$ be sufficiently smooth and with simply connected set of positivity. Assume further that
\begin{itemize}
\item either $u_0$ is compactly supported with subcritical left and right endpoint decay
\item or \(u_0\) is supported on \(\R\) and asymptotically approaches a bounded non-degenerate traveling wave or zero.
\end{itemize}
Then there exists a time \(T>0\) and a unique hydrodynamic solution of the equation \eqref{QdV} on the time interval \([0,T]\).
\end{thm}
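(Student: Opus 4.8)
The plan is to construct a solution by compactness, as a limit of solutions with non-degenerate data; uniqueness is already supplied by Theorem~\ref{thm:Unique}. First I would regularize the data, setting $\rho_\varepsilon := \rho + \varepsilon^2\psi$ with $\psi>0$ smooth and sharing the behaviour at infinity of the limiting profile, and $u_0^\varepsilon := \sqrt{\rho_\varepsilon}$, so that $u_0^\varepsilon\geq c\varepsilon$ is bounded away from zero in the interior while $(u_0^\varepsilon)^2\to\rho$ in $C^3$. For such non-degenerate data, \eqref{QdV} is a quasilinear KdV-type equation with uniformly non-degenerate dispersion, and the theory of Akhunov~\cite{MR2982797} (following Kenig--Ponce--Vega) provides smooth solutions $u^\varepsilon$ on maximal intervals $[0,T_\varepsilon)$, which remain positive while they exist.

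The heart of the argument is then a uniform-in-$\varepsilon$ a priori estimate showing that the $u^\varepsilon$ persist, with bounds, on a common interval $[0,T]$, $T>0$ independent of $\varepsilon$, in norms adapted to the impending degeneracy. I would work with $\rho^\varepsilon := (u^\varepsilon)^2$, which solves
\[
\rho_t + \rho\,\rho_{xxx} + \tfrac12\rho_x\rho_{xx} + 3\mu\,\rho\,\rho_x = 0,
\]
a quasilinear dispersive equation whose leading operator $\rho\,\partial_x^3$ degenerates exactly where $u^\varepsilon\to0$. Two ingredients enter. Since $u^\varepsilon$ satisfies the transport equation \eqref{transport} with velocity $b^\varepsilon = \tfrac12((u^\varepsilon)^2)_{xx} + \mu(u^\varepsilon)^2$, its positivity set (as degenerate data are approached) is carried by the flow of $b^\varepsilon$; I would track the moving endpoints $x_\pm^\varepsilon(t)$ and, near each one, pass to the ``travel-time'' coordinate $dy = \rho^{-1/3}\,dx$ singled out by the bicharacteristic analysis of Section~\ref{subsectionedr}, in which $\rho\,\partial_x^3$ becomes $\partial_y^3$ plus lower-order terms, and the subcritical decay hypotheses~\eqref{LEP}--\eqref{REP} become the statement that these lower-order coefficients decay (so that the induced frequency growth is sub-exponential rather than exponential). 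In these coordinates I would then run a weighted energy estimate: differentiate the equation up to the relevant order, pair against a weight that degenerates at the endpoints at the rate prescribed by the coordinate change, and absorb the commutators $[\partial_y^k,\rho\,\partial_x^3]$ after a Kenig--Ponce--Vega-type gauge transformation removing the dangerous lower-order term, obtaining a differential inequality $\tfrac{d}{dt}\mathcal E^\varepsilon(t)\lesssim \mathcal E^\varepsilon(t)^N$ uniform in $\varepsilon$. Simultaneously one must propagate a matching lower bound, essentially $(u^\varepsilon)^2\gtrsim\dist(\cdot,\text{endpoints})^3$ away from the endpoints, which prevents the formation of a new interior zero and keeps the positivity set simply connected --- this is the role of the $u^{4/3}$ hypothesis in Theorem~\ref{thm:Unique}.

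With these bounds in hand, $\{u^\varepsilon\}$ is equicontinuous in space and $\{\partial_t u^\varepsilon\}$ is bounded in a weaker norm, so Arzel\`a--Ascoli (together with $(u^\varepsilon)^2$ bounded in $C([0,T];C^3_{\mathrm{loc}})$) extracts a subsequence converging to some non-negative $u$; one verifies that $u^2\in C([0,T];C^3)$, $u^{4/3}\in C([0,T];C^2)$, that $u$ has the asserted endpoint behaviour, and that $u$ solves \eqref{transport}, hence is a hydrodynamic solution of \eqref{QdV} in the sense of Definition~\ref{def:hydro}. The two cases in the statement are treated identically; in the second case $u_0^\varepsilon$ is simply chosen with the prescribed asymptotics at infinity (the data being already non-degenerate there). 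Uniqueness then follows from Theorem~\ref{thm:Unique}.

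The main obstacle is the uniform estimate near the degenerate endpoints. There the dispersive smoothing of \eqref{QdV} disappears, so the usual local-smoothing and maximal-function estimates are unavailable, and the variable-coefficient commutators are genuinely borderline: the subcritical decay condition is exactly the threshold --- the $k\geq3$ versus $k<3$ dichotomy of the bicharacteristic ODE of Section~\ref{subsectionedr} --- at which the weighted norm is merely preserved rather than amplified exponentially. Arranging the degenerating weight, the gauge transformation, and the time-dependent change of variables straightening the moving endpoints to be mutually compatible, with matching rates of degeneracy, is the delicate core of the proof.
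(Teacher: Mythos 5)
Your proposal takes a genuinely different route from the paper, and some of those differences are real tradeoffs. You regularize the \emph{data} (set $\rho_\varepsilon = \rho + \varepsilon^2\psi$ and invoke Akhunov's non-degenerate theory), whereas the paper regularizes the \emph{equation} (adds $-\nu\partial_y^4$) and does so only \emph{after} passing to Lagrangian coordinates. That last change of frame is doing a lot of work that your sketch has to replicate by hand: since the Lagrangian map transports the support of the solution, the degeneracy locus becomes fixed in time, the ``travel-time'' coordinate $dy = \rho^{-1/3}\,dx$ is taken with respect to the \emph{initial} density and is therefore time-independent, and there are no moving endpoints to straighten. Your Eulerian version requires a time-dependent coordinate change following $x_\pm^\varepsilon(t)$ whose compatibility with the weighted energy functional you flag, correctly, as the delicate core --- the paper simply designs that difficulty away.

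The more serious issue is the mechanism you propose for the energy estimate. You write that the dangerous lower-order term is to be removed ``after a Kenig--Ponce--Vega-type gauge transformation.'' In travel-time coordinates the linearization reads $Z_t + Z_{yyy} + \tfrac52\tfrac{\rho_y}{\rho}Z_{yy} + \dots = 0$, and the coefficient $a \sim \rho_y/\rho$ has antiderivative $\ln\rho$, which is \emph{unbounded} near the endpoints. The Mizohata condition~\eqref{Mizohata} therefore fails, and a KPV-style exponential gauge is not a bounded change of topology: the associated weight $\Phi = e^{A/3} = \rho^{5/6}$ degenerates. The paper makes exactly this observation and resolves it not by a gauge in the usual sense but by (i) the substitution $W = \rho^{5/6}Z$, which produces an equation whose linearization has \emph{no} $W_{yy}$ term, and (ii) working in the weighted spaces $\X^{N,K}$ of~\eqref{FunctionSpace}, whose polynomial weights compensate for the decay of $\rho^{5/6}$ and are propagated by the Airy flow on $O(1)$ timescales (via $[\partial_t + \partial_y^3,\,y-3t\partial_y^2]=0$). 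This last propagation fact --- not the gauge --- is what closes the estimate. Your differential inequality $\tfrac{d}{dt}\mathcal E^\varepsilon \lesssim (\mathcal E^\varepsilon)^N$ will not hold uniformly in $\varepsilon$ near the endpoints without some substitute for it, because the untransformed commutators really do produce logarithmic-in-time amplification precisely at the subcritical threshold. Until that mechanism is specified, the central uniform a priori bound in your outline is asserted rather than proved.
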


A rigorous statement of Theorem~\ref{thm:MainRough} is given in Theorem~\ref{thm:MainFull}.

\begin{rem}
The regularity and localization assumptions on the initial data are roughly:
\begin{enumerate}
\item There exists some integer \(K_0\geq 0\) so that \(\left(\int_0^x\rho(s)^{-\frac13}\,ds\right)^{\frac 65 K_0}\rho(x)\gtrsim 1\) (see \eqref{SlowDecayInit})
\item For integers \(1\leq n\leq 2K_0 + 4\) we have \(\partial_x^n\left(\rho^{\frac n3 + \frac16}\right)\in L^2\) (see \eqref{InitBound})
\end{enumerate}
\end{rem}

\begin{rem}
For compactly supported data, our result is essentially optimal as far  as endpoint decay rates are concerned: we can handle all smooth initial data that satisfies \(\rho\sim \dist(x,x_\pm)^{\alpha_\pm}\) as \(x\rightarrow x_\pm\) for \(\alpha_\pm>3\) (see Section~\ref{sect:Examples}). This is optimal in two respects:
\begin{itemize}
\item In the light of the bicharacteristic computation done in~\eqref{subsectionedr}.
\item In the hydrodynamic formulation, \(\alpha_\pm>3\) corresponds to requiring $b \in \mathcal{C}^1$. This is essentially optimal if one wants to define characteristics by the Picard-Lindel\"of theorem.
\end{itemize}
\end{rem}

\begin{rem}
It is possible to obtain a quantitative lower bound for the lifespan of existence from our result, although as it is likely far from optimal we do not attempt to track it carefully. However, it is clear from the proof that the lifespan depends not only on the size of the initial data, but also on the rate of decay of the initial data at the endpoints of its support and on the smallest local minimum of the initial data on the set of positivity \(I\).
\end{rem}

\subsection{Strategy of the proof} We now outline the strategy of the proof. The first difficulty is to give the equation an appropriate form to derive energy estimates. This is done in several steps:

\bigskip

\noindent \underline{Lagrangian formulation.} A key difficulty of working in the original frame is that the degeneracy at the endpoints will be time-dependent. In order to remove this time-dependence we switch to a moving frame, an approach that is common in degenerate hyperbolic and parabolic equations (see for example~\cite{MR1378250,MR1668860,MR3197240,MR3544328,KochHT}).
Recalling the hydrodynamic formulation \eqref{transport}, we let $X$ be the Lagrangian map associated to the vector field $b$; in other words
\begin{equation}\label{VFFlow}
\begin{cases}
X_t(t,x) = b(t,X(t,x))\vspace{0.1cm}\\
X(0,x) = x.
\end{cases}
\end{equation}
Letting \(Z(t,x) = \dfrac1{X_x(t,x)} - 1\) and $\rho = (u_0)^2$, the Cauchy problem for $u$ is equivalent to
\begin{equation}\label{Zeqn1}
\begin{cases}
Z_t + (1 + Z)^2\left(\frac12(1 + Z)\left((1 + Z)\left((1 + Z)^2\rho\right)_x\right)_x + \mu (1 + Z)^2\rho\right)_x = 0\vspace{0.1cm}\\
Z(0,x) = 0.
\end{cases}
\end{equation}

\bigskip

\noindent \underline{Change of independent coordinates and the Mizohata condition.}
The linearized problem for $Z$ about \(Z = 0\) reads $Z_t + \rho Z_{xxx} + \dots = 0$. In order to make the leading order coefficient constant, we set
$$
y = \int_0^x \frac1{\rho(s)^{\frac13}} \, ds,
$$
so that the linearized problem around $Z=0$ becomes
\begin{equation}
\label{linearizedZ}
Z_t + Z_{yyy} + \frac{5}{2} \frac{\rho_y}{\rho} Z_{yy} + \dots = 0
\end{equation}
The top order term has a constant coefficient, which greatly simplifies estimates. However, a new problem arises since this linearized problem violates the Mizohata condition: recall that a necessary condition for (forwards in time) local well-posedness in Sobolev spaces of the equation
\begin{equation}\label{NaiveToy}
w_t + w_{yyy} + a(y)w_{yy} = 0,
\end{equation}
on \(\R\), where \(a\) is assumed to be smooth and bounded, is the Mizohata condition~\cite{MR2967120,MR3179690,MR3266990,MR860041}
\begin{equation}\label{Mizohata}
\sup\limits_{y_1\leq y_2}\int_{y_1}^{y_2}a(s)\,ds <\infty.
\end{equation}
Thus, when proving local well-posedness for non-degenerate quasilinear KdV equations one typically assumes additional \(L^1\)-type integrability conditions for the initial data to ensure the condition~\eqref{Mizohata} is satisfied. Indeed, one may take advantage of the failure of~\eqref{Mizohata} to obtain ill-posedness in Sobolev spaces for quasilinear problems~\cite{MR2967120,MR2446185}. We remark that for non-degenerate initial data the Hamiltonian structure of \eqref{QdV} may be used to remove integrability conditions and prove local well-posedness in Sobolev spaces.

In order to circumvent this difficulty and obtain well-posedness for \eqref{NaiveToy} we must consequently work in a different topology. This relies on two key observations about linear KdV-type equations of the form \eqref{NaiveToy}: first, introducing the weight \(\Phi = e^{\frac13 A}\), where \(A\) is an antiderivative of \(a\), we may obtain energy estimates for \eqref{NaiveToy} in the weighted space \(L^2(\Phi^2\,dx)\). Indeed, integrating by parts yields,
\[
\frac d{dt}\|\Phi w\|_{L^2}^2 \lesssim \|a\|_{W^{3,\infty}}\|\Phi w\|_{L^2}^2.
\]
Second, for sufficiently smooth initial data, polynomial weights are propagated by the linear KdV flow on \(O(1)\) timescales. This is most readily seen from the identity \([\partial_t + \partial_y^3,y - 3t\partial_y^2] = 0\), and leads to the definition of the weighted Sobolev spaces \(\X^{N,K}\) in Section~\ref{sect:FunctionSpaces}.

Returning to the linearization~\eqref{linearizedZ} of the equation for \(Z\), we see that we should take \(\Phi = \rho^{\frac56}\). Due to the subcritical endpoint decay assumptions, $\rho(y)$ decays polynomially (as will be illustrated on several examples below); this implies that the Mizohata condition is barely violated and we can use the fact the the linear KdV equation propagates polynomial weights on \(O(1)\) timescales to prove the existence of solutions of the equation \eqref{Zeqn1} in weighted Sobolev spaces of the type $L^2(\rho^{\frac53}\,dx)$. However, it will be more convenient to perform one last change of coordinates...

\bigskip
\noindent \underline{Change of dependent coordinates.} Motivated by the previous discussion, we set $W = \rho^{5/6} Z$; it satisfies the equation
\begin{equation}
\label{equationW}
W_t + (1+g) W_{yyy} + \frac{7}{5} g_y W_{yy} + N(y,W,W_y) + \rho^{5/6} F = 0,
\end{equation}
where $N$ is a polynomial in $W$ and $W_y$, $F$ is a function of $\rho$ and its derivatives, and
$$
g = (1+ \rho^{-5/6} W)^5 - 1.
$$
This is the form of the equation which we will use to perform estimates.

Notice that the linearization of the equation around 0 now reads
$$
W_t + W_{yyy} + \{ \mbox{terms of order less than $1$} \} = 0;
$$
in particular the quadratic term $W_{yy}$ term has disappeared.

\bigskip
\noindent
\underline{Construction of solutions.} The equation~\eqref{equationW} is the one which we use to construct solutions. Our scheme is the following
\begin{itemize}
\item Regularization of the equation is achieved by adding a term $-\nu W_{yyyy}$ on the right-hand side of~\eqref{equationW}. This allows construction of local solutions over a time span $[0,T(\nu)]$.
\item Energy estimates in weighted Sobolev spaces allow to obtain a uniform time of existence (in $\nu$) as well as uniform bounds on the corresponding solutions. These energy estimates are delicate, and rely crucially on the structure of~\eqref{equationW}.
\item Finally, a simple compactness argument allows us to pass to the limit $\nu \to 0$, first finding a convergent sequence of solutions, and then passing to the limit in the equation.
\end{itemize}

%%%%%%%%%%%%%%%%%%%%%%%
%%% TECHNICAL STUFF %%%
%%%%%%%%%%%%%%%%%%%%%%%

\section{Some technical estimates}
\subsection{Function spaces}\label{sect:FunctionSpaces}
We will seek solutions to the equation \eqref{equationW} in the weighted Sobolev spaces \(\X^{N,K}\subset L^2\), defined to be the completion of \(C^\infty_0(\R)\) under the norm,
\begin{equation}\label{FunctionSpace}
\| f \|_{\X^{N,K}} = \sum\limits_{k = 0}^K\sum\limits_{n = 0}^{2(K - k) + N}\|\< y\>^k\partial_y^n f\|_{L^2}.
\end{equation}

Before recording some of the basic properties of the $\X^{N,K}$ space, let us explain in a few sentences why this space is adapted to the (flat) Airy equation. Arguing heuristically, consider data of $L^2$ mass one, which are localized in phase-space around $(x_0,\xi_0)$; this gives a norm in $\X^{N,K}$ of order $A \sim \sum_{k=0}^K x_0^k \xi_0^{2(K-k) + N}$. At time $t=1$, the solution of the Airy equation should be localized in phase space around $(x_0-3 \xi_0^2,\xi_0)$, giving a norm of order $B \sim \sum_{k=0}^K (x_0 - 3 \xi_0^2)^k \xi_0^{2(K-k) + N}$. Since $B \lesssim A$, it should be expected that the Airy equation is locally well-posed in $\X^{N,K}$.

Turning to the properties of $\X^{N,K}$, we remark first that 
\[
\|\partial_y^nf\|_{\X^{N,K}}\lesssim \|f\|_{\X^{N + n,K}},
\]
and that we have the interpolation estimate,
\begin{equation}\label{Interpolate}
\| f \|_{\X^{4,K}}^2\lesssim \| f \|_{\X^{1,K}}\| f \|_{\X^{7, K}}.
\end{equation}
Further, if \(P_{\leq j}\) is the usual Littlewood-Paley projection to frequencies \(\lesssim 2^j\) and \(P_{>j} = 1 - P_{\leq j}\) we have the estimate for $j\geq0$
\begin{equation}\label{Bernie}
\| P_{\leq j}f \|_{\X^{N + n,K}}\lesssim 2^{nj}\| f\|_{\X^{N,K}},\qquad \| P_{>j} f\|_{\X^{N,K}}\lesssim 2^{-nj}\| f \|_{\X^{N + n,K}}.
\end{equation}

We will construct solutions using a parabolic regularization given by the semigroup \(e^{-\nu t\partial_y^4}\). This motivates defining the subspace \(\cZ^{N,K}\subset C([0,T];\X^{N,K})\) with norm
\begin{equation}\label{SNorm}
\| f \|_{\cZ^{N,K}} = \sum\limits_{ n = 0}^3 \|(\nu t)^{\frac n4} f \|_{L^\infty([0,T];\X^{N + n,K})}
\end{equation}
We then have the following lemma:
\begin{lem}
Let \(N,K\geq 0\), \(0<\nu \ll1\) and \(T\lesssim \nu^{-1}\). Then for all \(G\in C^\infty_0([0,T]\times\mathbb{R})\) we have the estimates
\begin{align}
\|e^{-\nu t\partial_x^4}f\|_{\cZ^{N,K}}&\lesssim \|f\|_{\X^{N,K}}\label{semigroupInit}
\\\label{ParabolicReg}
\left\|\int_0^t e^{-\nu(t - s)\partial_y^4}G(s)\,ds\right\|_{\cZ^{N,K}}&\lesssim \nu^{-\frac34}T^{\frac14} \|(\nu t)^{\frac34}G\|_{L^\infty([0,T];\X^{N,K})},
\end{align}
where the constant is independent of \(\nu\).
\end{lem}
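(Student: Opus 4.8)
The plan is to prove the two estimates \eqref{semigroupInit} and \eqref{ParabolicReg} by reducing everything to the behavior of the semigroup $e^{-\nu t\partial_y^4}$ (equivalently $e^{-t\partial_y^4}$ after rescaling) on the spaces $\X^{N,K}$. The core observation is that $e^{-t\partial_y^4}$ is a Fourier multiplier with symbol $e^{-t\xi^4}$, and that the $\X^{N,K}$ norm is built out of the operators $\langle y\rangle^k\partial_y^n$. Since $\partial_y$ commutes with the semigroup and $[\,y,\partial_y^4\,]=4\partial_y^3$, conjugating $\langle y\rangle^k$ through the semigroup produces only lower-order corrections, each carrying a gain of $t$ from one application of the equation $[\partial_t+\partial_y^4,\,y]=-4\partial_y^3$. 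So the whole problem comes down to two scalar facts about the symbol: first, $\|e^{-t\partial_y^4}\|_{L^2\to L^2}\le 1$ (trivially, since $|e^{-t\xi^4}|\le1$); and second, the smoothing estimate $\|\partial_y^m e^{-t\partial_y^4}f\|_{L^2}\lesssim t^{-m/4}\|f\|_{L^2}$, which follows from $\sup_\xi |\xi|^m e^{-t\xi^4} \lesssim t^{-m/4}$.

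\textbf{Step 1 (the homogeneous estimate \eqref{semigroupInit}).} Unwinding the definition \eqref{SNorm}, I must bound $\sum_{n=0}^3 (\nu t)^{n/4}\|e^{-\nu t\partial_y^4}f\|_{\X^{N+n,K}}$ by $\|f\|_{\X^{N,K}}$. Fix $n\in\{0,1,2,3\}$ and a pair $(k,m)$ with $0\le k\le K$, $0\le m\le 2(K-k)+N+n$ appearing in the $\X^{N+n,K}$ norm; I need $(\nu t)^{n/4}\|\langle y\rangle^k\partial_y^m e^{-\nu t\partial_y^4}f\|_{L^2}\lesssim\|f\|_{\X^{N,K}}$. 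Write $\partial_y^m = \partial_y^{m_0}\partial_y^{m_1}$ with $m_1 = \min(m,n)$ (so $m_1\le 3$), apply the smoothing estimate to the $\partial_y^{m_1}$ factor to absorb a power $(\nu t)^{-m_1/4}$ against the prefactor $(\nu t)^{n/4}$, leaving $(\nu t)^{(n-m_1)/4}\ge 0$ harmless for $t$ bounded (or, if $m\ge n$, no smoothing is needed and one just commutes $\partial_y^m$ past the semigroup). To move $\langle y\rangle^k$ past $e^{-\nu t\partial_y^4}$, I use the commutator identity iteratively: $e^{-\nu t\partial_y^4}\langle y\rangle^k = \langle y\rangle^k e^{-\nu t\partial_y^4} + \sum (\text{polynomial in } y \text{ of degree} <k)\cdot(\nu t)^{j}\partial_y^{3j} e^{-\nu t\partial_y^4}$ for $1\le j\le k$; each correction term is lower order in $\langle y\rangle$ and the extra $(\nu t)^j \partial_y^{3j}$ is again controlled by smoothing (costing $t^{-3j/4}$, beaten by $t^j$ for $t$ bounded). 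Collecting, every term reduces to $\|\langle y\rangle^{k'}\partial_y^{m'}f\|_{L^2}$ with $k'\le k$ and $m' = m - m_1 + 3j$; one then checks the index $m'$ still satisfies $m'\le 2(K-k')+N$ — this is a bookkeeping inequality: we traded $m_1\ge$ (amount of smoothing) derivatives and gained $3j$ from commutators while dropping $j$ powers of $y$, and the worst case $j=k-k'$, $m_1=n$ gives $m' \le 2(K-k)+N+n - n + 3(k-k') = 2(K-k')+N + (k - k') \le 2(K-k')+N$... wait, this needs $k-k'\le 0$, so one must be more careful: the smoothing actually has to be allocated to kill not just the $(\nu t)^{n/4}$ but also the commutator factors. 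The correct accounting uses that each commutator step spends one power of $(\nu t)$ which, via smoothing, removes four derivatives' worth — three from the explicit $\partial_y^{3}$ and one to spare — so the net derivative count stays controlled; I would set this up as a single induction on $k$.

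\textbf{Step 2 (the Duhamel estimate \eqref{ParabolicReg}).} This follows from \eqref{semigroupInit} together with a crude integration in $s$. Writing $v(t)=\int_0^t e^{-\nu(t-s)\partial_y^4}G(s)\,ds$, for each of the four terms $(\nu t)^{n/4}\|v\|_{\X^{N+n,K}}$ in the $\cZ^{N,K}$ norm I estimate $\|(\nu t)^{n/4} e^{-\nu(t-s)\partial_y^4} G(s)\|_{\X^{N+n,K}}$ by allocating the smoothing of $e^{-\nu(t-s)\partial_y^4}$ in the style of Step 1: I gain $(\nu(t-s))^{-3/4}$ to convert $G(s)\in\X^{N,K}$ norm-wise up to $\X^{N+3,K}$, write $(\nu t)^{n/4}\lesssim (\nu t)^{3/4}$, pull out $(\nu s)^{3/4}$ to match the right-hand side norm, and are left to bound $\int_0^t (\nu(t-s))^{-3/4} (\nu s)^{-3/4}(\nu t)^{3/4}\,ds\cdot\|(\nu t)^{3/4}G\|_{L^\infty\X^{N,K}}$. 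The $s$-integral is $\nu^{-3/2}(\nu t)^{3/4}\int_0^t (t-s)^{-3/4}s^{-3/4}\,ds = \nu^{-3/2}(\nu t)^{3/4} t^{-1/2} B(\tfrac14,\tfrac14)\lesssim \nu^{-3/4} t^{1/4}$, which is exactly the claimed factor $\nu^{-3/4}T^{1/4}$ after taking the sup over $t\le T$. The commutator corrections from moving $\langle y\rangle^k$ through the semigroup are handled as in Step 1 and only improve the powers of $(t-s)$.

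\textbf{Main obstacle.} The genuinely delicate point is the index bookkeeping in Step 1: verifying that after commuting weights past the semigroup and spending the parabolic smoothing, every resulting term $\|\langle y\rangle^{k'}\partial_y^{m'}f\|_{L^2}$ has $(k',m')$ lying in the admissible range $\{0\le k'\le K,\ 0\le m' \le 2(K-k')+N\}$ defining $\|f\|_{\X^{N,K}}$ — i.e., that the relation $2(K-k)+N$ built into \eqref{FunctionSpace} is precisely calibrated so that trading one power of $\langle y\rangle$ costs at most two derivatives, matching the Airy scaling $y\sim 3t\xi^2$ flagged in the heuristic before the lemma. The restriction $T\lesssim\nu^{-1}$ enters exactly to ensure the positive powers of $(\nu t)$ produced by over-smoothing (when $m\ge n$ or when commutators are generous) stay $O(1)$ and do not spoil the bound.
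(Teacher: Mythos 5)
Your approach is essentially the paper's: both estimates are reduced to (i) $L^1$-kernel/Fourier-multiplier smoothing bounds for $\partial_y^m e^{-\nu t\partial_y^4}$, (ii) iterated use of the commutator $[e^{-\nu t\partial_y^4},y]=-4\nu t\,\partial_y^3 e^{-\nu t\partial_y^4}$ to conjugate the weights $\langle y\rangle^k$ through the semigroup, and (iii) a Beta-function convolution estimate in $s$ for the Duhamel term. (A small sign slip: $[\partial_t+\partial_y^4,y]=+4\partial_y^3$, not $-4\partial_y^3$; it has no bearing on the argument.) Step 2 is fine up to the throwaway line $(\nu t)^{n/4}\lesssim(\nu t)^{3/4}$, which is backwards for $\nu t\le 1$ and $n<3$; what you actually use (and what works) is that each $n\in\{0,1,2,3\}$ gives $\int_0^t(\nu(t-s))^{-n/4}(\nu s)^{-3/4}\,ds\lesssim\nu^{-(n+3)/4}t^{(1-n)/4}$, and multiplying by $(\nu t)^{n/4}$ yields $\nu^{-3/4}t^{1/4}$ in every case.

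The real soft spot, which you correctly flag yourself, is the index bookkeeping in Step 1. It fails in the form you first write it precisely because you send the $3j$ commutator-generated derivatives onto $f$ (getting $m'=m-m_1+3j$). The clean resolution — and the one the paper uses — is never to let those derivatives touch $f$: after conjugation one has operators of the schematic form $\partial_y^{m+3j}\,(\nu t)^j\,e^{-\nu t\partial_y^4}\,\langle y\rangle^{k-j}$, and since all the $\partial_y$'s commute with the semigroup, one freely chooses how many to leave on $f$ and how many to keep in the kernel. Keeping $m_1+3j$ in the kernel costs $(\nu t)^{-(m_1+3j)/4}$, which against the available $(\nu t)^{n/4+j}$ leaves $(\nu t)^{(n-m_1)/4+j/4}\lesssim 1$ for $\nu t\lesssim 1$; and the $m'=m-m_1$ derivatives remaining on $\langle y\rangle^{k-j}f$ then satisfy $m'\le 2(K-(k-j))+N$ with room to spare. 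Put differently, the paper's formulation — that $[y^K\partial_y^n,e^{-\nu t\partial_y^4}]$ is a sum of $\partial_y^n(\nu t\partial_y^3)^\ell e^{-\nu t\partial_y^4}y^m$ with $\ell+m\le K$, whose kernel has $L^1$ norm $\lesssim(\nu t)^{(\ell-n)/4}\lesssim(\nu t)^{-n/4}$ when $\nu t\lesssim 1$ — absorbs the $3\ell$ derivatives and the extra $(\nu t)^\ell$ into a single bounded operator, so the constraint on $m$ reduces to $m\le K$, trivially compatible with $\X^{N,K}$. Your phrase "three from the explicit $\partial_y^3$ and one to spare" gestures at this but stops short of the correct statement: the leftover $(\nu t)^{j/4}$ is not extra smoothing to be allocated, it is simply the bounded overshoot that the hypothesis $\nu T\lesssim 1$ renders harmless. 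The induction on $k$ you propose would work once set up this way, so the gap is fillable; but as written, the accounting does not close.
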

\begin{proof} Starting with the case $K=0$, denote the Fourier transform of $e^{-\xi^4}$ by $\psi$. The kernel of $\partial_y^n e^{-\nu(t-s) \partial_y^4}$ is then given by
$$
\frac{1}{(\nu(t-s))^{\frac{n+1}{4}}} \psi^{(n)} \left( \frac{x}{(\nu(t-s))^{1/4}} \right),
$$
which has $L^1$ norm $\lesssim \frac{1}{(\nu(t-s))^{n/4}}$. Therefore, applying Minkowski's inequality we obtain
$$
(\nu t)^{n/4}\|\partial_y^ne^{-\nu t\partial_x^4}f\|_{H^N}\lesssim \|f\|_{H^N},
$$
and similarly,
$$
(\nu t)^{n/4} \left\| \int_0^t \partial_y^n e^{-\nu(t-s) \partial_y^4} G(s)\,ds \right\|_{H^N} \lesssim \int_0^t \frac{1}{(1-\frac{s}{t})^{n/4}} \|G(s) \|_{H^N} \,ds \lesssim t^{\frac14}\| t^{\frac34} G \|_{L^\infty H^N},
$$
where we used the fact that for \(0\leq n\leq 3\),
\[
\int_0^1 \frac1{(1 - s)^{n/4}s^{3/4}}\,ds\lesssim 1.
\]
This gives the desired result for $K=0$.

Turning to the case $K \geq 1$, observe that
$$
[ e^{-\nu t \partial_y^4},y] = - 4 \nu t \partial_y^3 e^{-\nu t \partial_y^4}.
$$
Therefore, $[y^K \partial_y^n,e^{-\nu t \partial_y^4}]$ is a linear combination of terms of the type
$$
\partial_y^n (\nu t \partial_y^3)^\ell e^{-\nu t \partial_y^4} y^m, \qquad \mbox{with}\;\; \ell + m \leq K,
$$
where the kernel of \(\partial_y^n (\nu t \partial_y^3)^\ell e^{-\nu t \partial_y^4}\) is given by
$$
(\nu t)^{\frac{\ell-n-1}{4}} \psi^{(3\ell+n)} \left( \frac{x}{(\nu t)^{1/4}} \right),
$$
which has $L^1$ norm $\lesssim (\nu t)^{\frac{\ell-n}{4}} \lesssim (\nu t)^{-n/4}$ if $\nu t \lesssim 1$. Arguing as before then gives the desired inequality.
\end{proof}

\subsection{Pointwise bounds}
In order to control the pointwise behavior of solutions we first recall the usual \(1d\) Sobolev estimate,
\begin{equation}\label{BasicSobolev}
\|f\|_{L^\infty}\lesssim \|f\|_{H^1}.
\end{equation}
Applying this estimate to \(\<y\>^k \partial_y^n f\) we obtain the following lemma:
\begin{lem}
If \(0\leq k\leq K\) and \(f\in C^\infty_0(\R)\) we have the estimate,
\begin{equation}\label{Sobolev}
\sum\limits_{n = 0}^{2(K - k) + N - 1}\|\<y\>^k\partial_y^nf\|_{L^\infty}\lesssim \|f\|_{\X^{N,K}}.
\end{equation}
\end{lem}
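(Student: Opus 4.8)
The plan is to prove the estimate one summand at a time and then add up the (finitely many) resulting bounds. Fix $k$ with $0\le k\le K$ and an index $n$ with $0\le n\le 2(K-k)+N-1$, and set $g:=\<y\>^k\partial_y^nf$. Since $f\in C^\infty_0(\R)$, the function $g$ is smooth and compactly supported, so applying the one-dimensional Sobolev embedding \eqref{BasicSobolev} to $g$ is legitimate and gives $\|\<y\>^k\partial_y^nf\|_{L^\infty}\lesssim \|g\|_{L^2}+\|\partial_y g\|_{L^2}$.

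Next I would compute $\partial_y g=\<y\>^k\partial_y^{n+1}f+k\,y\<y\>^{k-2}\partial_y^nf$, using $\partial_y\<y\>^k=k\,y\<y\>^{k-2}$. The first term is precisely $\<y\>^k\partial_y^{n+1}f$, and because $n\le 2(K-k)+N-1$ we have $n+1\le 2(K-k)+N$, so $\|\<y\>^k\partial_y^{n+1}f\|_{L^2}$ is one of the summands appearing in $\|f\|_{\X^{N,K}}$ by the definition \eqref{FunctionSpace}. For the second term I would use the elementary pointwise bound $|y\<y\>^{k-2}|\le \<y\>^{k-1}\le \<y\>^{k}$, which holds for all $k\ge 1$ and is vacuous when $k=0$ (the term itself then vanishes); hence $\|k\,y\<y\>^{k-2}\partial_y^nf\|_{L^2}\lesssim \|\<y\>^k\partial_y^nf\|_{L^2}$, which is again a summand of $\|f\|_{\X^{N,K}}$. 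Combining these, $\|\<y\>^k\partial_y^nf\|_{L^\infty}\lesssim \|\<y\>^k\partial_y^nf\|_{L^2}+\|\<y\>^k\partial_y^{n+1}f\|_{L^2}\lesssim \|f\|_{\X^{N,K}}$.

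Finally I would sum this inequality over $n$ from $0$ to $2(K-k)+N-1$; the number of terms depends only on $N$ and $K$, so the implicit constant is harmless and the claimed bound \eqref{Sobolev} follows. There is no serious obstacle here: the only points requiring a little care are keeping the derivative index $n+1$ within the range $[0,\,2(K-k)+N]$ permitted by \eqref{FunctionSpace}, and handling the weight derivative uniformly in $k$, including the degenerate case $k=0$.
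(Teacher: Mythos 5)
Your proof is correct and follows exactly the same strategy as the paper, which simply applies the one-dimensional Sobolev embedding \eqref{BasicSobolev} to $\<y\>^k\partial_y^n f$; you have merely filled in the routine product-rule computation and the verification that the resulting indices stay within the range permitted by \eqref{FunctionSpace}.
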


\begin{rem}\label{rem:Compact1}
We recall that the embedding \eqref{BasicSobolev} is locally compact. As a consequence, the embedding \eqref{Sobolev} is also locally compact.
\end{rem}

\begin{rem}\label{rem:OddSobolev}
In our application of the pointwise estimate \eqref{Sobolev} to control products of functions in \(\X^{N,K}\) we will require a slight refinement when \(N\) is even and \(n\) is odd. Suppose that $0 \leq k \leq K -1$, that $1 \leq n \leq 2(K-k) + N$, and that \(N\) is even while $n$ is odd. Then, for any \(f\in C^\infty_0(\R)\) we may integrate by parts to obtain
\begin{align*}
\|\< y\>^{k + \frac12} \partial_y^nf\|_{L^2}^2 & = - \int \<y\>^{2k + 1} \partial_y^{n + 1}f\cdot \partial_y^{n - 1}f\,dy - (2k+1)\int y\<y\>^{2k - 1}\partial_y^nf\cdot\partial_y^{n - 1}f\,dy \\
& \leq \| \langle y \rangle^{k+1} \partial_y^{n-1} f \|_{L^2} \| \langle y \rangle^{k} \partial_y^{n+1} f \|_{L^2} + \| \langle y \rangle^{k+1} \partial_y^{n-1} f \|_{L^2} \| \langle y \rangle^{k-1} \partial_y^{n} f \|_{L^2} \\
& \lesssim \|f\|_{\X^{N,K}}^2,
\end{align*}
where we have used the fact that as \(n\) is odd while \(N\) is even we have \(n - 1\leq 2(K - (k + 1)) + N\) and \(n + 1\leq 2(K - k) + N\). Applying the usual \(1d\) Sobolev estimate \eqref{BasicSobolev} then yields the slight refinement of the pointwise estimate \eqref{Sobolev},
\[
\|\<y\>^{k+\frac12}\partial_y^{n-1} f\|_{L^\infty}\lesssim \|f\|_{\X^{N,K}}.
\]
\end{rem}

\subsection{Product laws}
Given sufficiently smooth functions \(f_1,\dots,f_M\) we define the multilinear operator
\[
L_n[y,f_1,\dots,f_M] = \sum\limits_{\substack{|\alpha|\leq n+3\\\max\alpha_m\leq n}}\left( C_\alpha(y)\delta^{-(M - 1)}\<y\>^{K(M - 1)}\prod\limits_{m = 1}^M \partial_y^{\alpha_m}f_m(y)\right),
\]
where \(\delta>0\), \(K\geq 0\) and we assume that the coefficients \(C_\alpha\) are smooth, uniformly bounded functions. This type of multilinear expression will appear in the perturbative terms of the equation for \(W\) considered in Section~\ref{sect:Main}. We will also use this as a notation, writing
\[
G = L_n[y,f_1,\dots,f_M],
\]
if a multilinear operator \(G\) may be written in this form.

We then have the following estimate for multilinear operators of this form:
\begin{lem}\label{lem:Multilinear}
Let \(0\leq k\leq K\) and \(0\leq n\leq 2(K - k) + 4\). Then if \(f_1,\dots,f_M\in C^\infty_0(\R)\) and \(C_\alpha\in C^\infty(\R)\) we have the estimate
\begin{equation}\label{MultilinearBound}
\|\<y\>^kL_n[y,f_1,\dots,f_M]\|_{L^2}\lesssim \delta^{-(M-1)}\left(\sum_\alpha \|C_\alpha\|_{L^\infty}\right)\left(\prod\limits_{m=1}^M \|f_m\|_{\X^{4,K}}\right).
\end{equation}
\end{lem}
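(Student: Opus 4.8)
The plan is to reduce the whole estimate to the pointwise bound~\eqref{Sobolev} of the previous lemma together with the single $L^2$ factor we are allowed to keep. Fix a multi-index $\alpha$ appearing in $L_n$, so $|\alpha| \leq n + 3$ and $\max_m \alpha_m \leq n \leq 2(K-k) + 4$. The target is to bound
\[
\Big\| \<y\>^k C_\alpha(y)\,\delta^{-(M-1)}\<y\>^{K(M-1)} \prod_{m=1}^M \partial_y^{\alpha_m} f_m \Big\|_{L^2},
\]
and then sum over the finitely many $\alpha$. First I would pull out $\|C_\alpha\|_{L^\infty}$ and the factor $\delta^{-(M-1)}$, which is exactly the prefactor on the right-hand side of~\eqref{MultilinearBound}. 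What remains is to distribute the weight $\<y\>^{k + K(M-1)}$ among the $M$ factors $\partial_y^{\alpha_m} f_m$ and then put one factor in $L^2$ and the other $M-1$ in $L^\infty$.

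The key bookkeeping step is the weight split: write $\<y\>^{k + K(M-1)} = \<y\>^{k_1}\<y\>^{k_2}\cdots\<y\>^{k_M}$ with $0 \leq k_m \leq K$ for every $m$ and $\sum_m k_m = k + K(M-1)$. This is possible precisely because $k \leq K$: assign $k_{m_0} = k$ to one chosen index $m_0$ and $k_m = K$ to all others (there are $M-1$ of them, contributing $K(M-1)$). For each $m \neq m_0$ I would use the pointwise estimate~\eqref{Sobolev}, which applies since $\alpha_m \leq n \leq 2(K-k) + 4 \leq 2(K - k_m) + 4$ — wait, this needs care: we need $\alpha_m \leq 2(K - k_m) + N - 1$ with $N = 4$, i.e. $\alpha_m \leq 2(K - K) + 3 = 3$ when $k_m = K$. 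Since $\alpha_m$ can be as large as $n$, this is where I must be more careful, and I expect this to be the main obstacle.

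The resolution is to exploit the constraint $|\alpha| = \sum_m \alpha_m \leq n + 3$: at most one of the $\alpha_m$ can be ``large'' (close to $n$), while the rest must sum to at most $3$. So I would choose $m_0$ to be the index achieving the largest $\alpha_m$, keep $\partial_y^{\alpha_{m_0}} f_{m_0}$ (with weight $\<y\>^{k}$) in $L^2$ — bounding it by $\|\<y\>^k \partial_y^{\alpha_{m_0}} f_{m_0}\|_{L^2} \lesssim \|f_{m_0}\|_{\X^{4,K}}$ since $\alpha_{m_0} \leq n \leq 2(K-k)+4$ — and for each $m \neq m_0$ bound $\|\<y\>^K \partial_y^{\alpha_m} f_m\|_{L^\infty}$ via~\eqref{Sobolev}, which is legitimate because $\alpha_m \leq |\alpha| - \alpha_{m_0} \leq n + 3 - \alpha_{m_0}$, and combined with $\alpha_{m_0} \geq \alpha_m$ (by choice of $m_0$) this forces $\alpha_m \leq 3 = 2(K-K) + 4 - 1$. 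Here one small subtlety: if $M = 1$ the product is a single factor with weight $\<y\>^k$ and no $L^\infty$ factors are needed, so that case is immediate; and when $N=4$ is even and some $\alpha_m$ is odd one may invoke Remark~\ref{rem:OddSobolev} instead, though the crude count above already suffices since $3 = 2\cdot 0 + 4 - 1$. Finally I would apply Hölder's inequality (one $L^2$ times $M-1$ copies of $L^\infty$), collect the $\prod_m \|f_m\|_{\X^{4,K}}$, sum over the $O_n(1)$ multi-indices $\alpha$, and absorb $\sum_\alpha \|C_\alpha\|_{L^\infty}$ to conclude~\eqref{MultilinearBound}.
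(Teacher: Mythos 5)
The proposal contains a genuine gap that invalidates the argument for $K \geq 1$, $M \geq 2$.

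You adopt a rigid weight distribution: the factor with the most derivatives ($m_0$) receives weight $\<y\>^k$ and is placed in $L^2$, while all other factors receive the maximal weight $\<y\>^K$ and are placed in $L^\infty$. For the $L^\infty$ bound with weight $\<y\>^K$ to be admissible via \eqref{Sobolev} with $N=4$, you need $\alpha_m \leq 2(K-K)+3 = 3$ for every $m \neq m_0$. You assert that the constraint $\sum_m \alpha_m \leq n+3$ combined with $\alpha_m \leq \alpha_{m_0}$ forces $\alpha_m \leq 3$, but these two facts only give $2\alpha_m \leq \alpha_m + \alpha_{m_0} \leq n+3$, i.e.\ $\alpha_m \leq (n+3)/2$, which can be much larger than $3$. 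The claim ``at most one $\alpha_m$ can be large while the rest sum to at most $3$'' would require $\alpha_{m_0} = n$, which is not guaranteed. A concrete failure: $M=2$, $K=2$, $k=0$, $n = 2K+4 = 8$, $\alpha_1 = \alpha_2 = 5$ (legal since $|\alpha| = 10 \leq 11$, each $\leq 8$). Your scheme would try to bound $\|\<y\>^2 \partial_y^5 f\|_{L^\infty}$ by $\|f\|_{\X^{4,2}}$, but \eqref{Sobolev} requires $5 \leq 3$, which is false. The same problem occurs for $M \geq 3$, e.g.\ $K=3$, $k=0$, $\alpha_1=\alpha_2=\alpha_3 = 4$.

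The fix, which is what the paper's proof does, is to let the weight $\beta_m$ assigned to the $m$-th factor depend on its derivative count $\alpha_m$: roughly $\beta_m \approx K + 3/2 - \alpha_m/2$ for the $L^\infty$ factors and $\beta_M \approx K + 2 - \alpha_M/2$ for the $L^2$ factor (using floors, and the parity refinement of Remark~\ref{rem:OddSobolev} in borderline cases). A factor carrying more derivatives gets less weight, which is precisely what \eqref{Sobolev} permits, and one then checks that $\sum_m \beta_m \geq k + K(M-1)$ using $\sum \alpha_m \leq n+3$ and $n \leq 2(K-k)+4$. After first reducing to $\alpha_m \geq 3$ (estimating small-derivative factors in $L^\infty$ with their full weight $K$), this counting closes for $M\geq 3$, with the $M=2$ endpoint cases needing the parity argument. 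Your rigid split $k + K + \cdots + K$ cannot implement this trade-off, which is why it breaks.
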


\begin{proof} \underline{First reduction.} After reordering the indices, we can assume that $\alpha_1 \leq \alpha_2 \leq \dots \leq \alpha_M$.

Let us see quickly why the result is easy if the $\alpha_i$ are sufficiently small: first, if $\alpha_M \leq 3$, the desired bound easily holds, since $k \leq K$. Second, assume that $\alpha_1 \dots \alpha_{m_0}$ are all $\leq 2$, with $1 \leq m_0 \leq K-1$. Then one can estimate $\langle y \rangle^K \partial_y^{\alpha_m} f_m$ in $L^\infty$, for $m \leq m_0$, and matters reduce to proving the desired result for
$$
L_n'[y,f_{m_0+1},\dots,f_M] = \sum\limits_{\substack{|\alpha|\leq n+3\\\max\alpha_m\leq n}}\left( C_\alpha(y)\delta^{-(M - m_0 - 1)}\<y\>^{K(M - m_0 - 1)}\prod\limits_{m = m_0+1}^M \partial_y^{\alpha_m}f_m(y)\right).
$$
In other words, we can assume in the following that $\alpha_m \geq 3$ for $m \leq M-1$ and $\alpha_M \geq 4$.

Also notice that the case $k=0$ is easily dealt with; therefore, we shall assume in the following that $k \geq 1$.

\bigskip

\noindent
\underline{The case $M=1$.} It is immediate.

\bigskip

\noindent \underline{The case $M \geq 3$.} 
Our aim is to bound in $L^2$ 
$$
\langle y \rangle^{k + K (M-1)} \prod_{m=1}^M \partial^{\alpha_m}_y f_m,
$$
where $\alpha_1 \leq \alpha_2 \leq \dots \leq \alpha_M$, under the assumption that $f_m \in \X^{4,K}$ for all $m$.

The idea is to estimate the function carrying the most derivatives, namely $f_M$, in $L^2$, and all the others in $L^\infty$. Observe that
\begin{itemize}
\item On the one hand, $\| \langle y \rangle^{\beta_M} \partial_y^{\alpha_M} f_M \|_{L^2} \lesssim \| f_M \|_{\X^{4,K}}$ provided $\alpha_M \leq 2K - 2\beta_M +4$ and $\beta_M \leq K$; and the latter condition follows from the former since $\alpha_M \geq 4$.

\item On the other hand, if $m \leq M-1$, $\| \langle y \rangle^{\beta_m} \partial_y^{\alpha_m} f_m \|_{L^\infty} \lesssim \| f_M \|_{\X^{N,K}}$ provided $\alpha_m \leq 2K - 2\beta_m + 3$ and $\beta_m \leq K$; and the latter condition follows from the former since $\alpha_m \geq 3$.
\end{itemize}
Since $\beta_M$ and $\beta_m$ must be integers, the best choice possible is
$$
\beta_M = \left \lfloor K+2-\frac{\alpha_M}{2} \right \rfloor \qquad \mbox{and} \qquad \beta_m = \left \lfloor K+\frac{3}{2}-\frac{\alpha_m}{2} \right \rfloor \quad \mbox{for $1\leq m \leq M-1$}
$$
(notice that $\beta_m \geq 0$ for all $m$ since $k \geq 1$).
In order for the desired bound to hold, we need $\sum_{m=1}^M \beta_m \geq k + K (M-1)$, which follows from
$$
K+2-\frac{\alpha_M}{2} + \sum_{m=1}^{M-1} \left( K+\frac{3}{2}-\frac{\alpha_m}{2}\right) - \frac{M}{2} \geq k + K (M-1),
$$
where the summand $- \frac{M}{2}$ on the left-hand side comes from the rounding errors. Since $\sum \alpha_m \leq n+3$, the above inequality holds if
$$
n \leq 2K - 2k + 2M -2.
$$
This inequality is satisfied, under the assumptions of the lemma, if $M \geq 3$.

\bigskip 

\noindent \underline{The case $M=2$.} The above argument suffices if $\alpha_1 + \alpha_2 \leq 2(K-k) + 5$. Further, in the case that \(\alpha_1 + \alpha_2 = 2(K - k) + 6\) the \(\alpha_j\) have the same parity, so taking \(\beta_1,\beta_2\) as above we see that \(\beta_1 + \beta_2 = k + K\). In the remaining case \(\alpha_1 + \alpha_2 = n + 3 = 2(K - k) + 7\), we take
\[
\beta_1 = K + \frac32 - \frac{\alpha_1}2,\qquad \beta_2 = K + 2 - \frac{\alpha_2}2.
\]
If \(\alpha_1\) is odd and \(\alpha_2\) is even then the \(\beta_j\) are integers. If instead \(\alpha_1\) is even and \(\alpha_2\) is odd we apply the refined \(L^\infty\) and \(L^2\) estimates of Remark~\ref{rem:OddSobolev} (using $k \geq 1$, and since $4 \leq \alpha_1 \leq 4+2(K-k)$ and $5 \leq \alpha_2 \leq 4 + 2(K-k)$) to obtain the desired bound.
\end{proof}

\begin{rem}\label{rem:Compact2}
From Remark~\ref{rem:Compact1} and the proof of Lemma~\ref{lem:Multilinear} we see that whenever \(f^{(j)}\rightharpoonup f\) in \(\X^{4,K}\) we may pass to a subsequence to ensure that
\[\<y\>^kL_n[y,\overbrace{f^{(j)},\dots,f^{(j)}}^M]\rightharpoonup \<y\>^kL_n[y,\overbrace{f,\dots,f}^M]
\]
in \(L^2\) for any \(0\leq k\leq K\) and \(0\leq n\leq 2(K - k) + 4\).
\end{rem}

\subsection{Linear estimates}
We complete this section by considering a priori estimates for a model linear equation,
\begin{equation}\label{ModelSmooth}
\begin{cases}
\bw_t + (1 + \bg)\bw_{yyy} + \beta \bg_y\bw_{yy} + \ba \bw_y + \bff = - \nu\bw_{yyyy}\vspace{.1cm}\\
\bw(0) = 0,
\end{cases}
\end{equation}
where \(\beta\in\R\), \(\nu\geq0\) are constants and \(\bg,\ba,\bff\) are sufficiently smooth functions. This will provide a model for the equation satisfied by \(\partial_y^n W\) and will subsequently by used to obtain uniform (in \(\nu\)) bounds for solutions.

Our main a priori estimate for solutions to \eqref{ModelSmooth} is the following:
\begin{prop}\label{prop:MainAP}
Let \(T>0\) and suppose that
\begin{equation}\label{BasicAPriori}
\|\bg\|_{L^\infty([0,T];L^\infty)}\leq\frac12,\qquad \|\bg\|_{L^\infty([0,T];W^{3,\infty})} + \|\ba\|_{L^\infty([0,T];W^{1,\infty})}\lesssim 1.
\end{equation}
Then, if \(\bw\) is a sufficiently smooth, localized solution to \eqref{ModelSmooth} and \(0\leq \nu\ll1\) is sufficiently small we have the estimate
\begin{equation}\label{ModelAPriori}
\|\bw\|_{L^\infty([0,T];L^2)}^2 + \nu \|\bw_{yy}\|_{L^2((0,T);L^2)}^2\lesssim \sigma(T) \|\bff\|_{L^\infty([0,T];L^2)}^2,
\end{equation}
where
\begin{equation}\label{mu}
\sigma(T) = e^{C\int_0^T \left(1 + \|\bg_t\|_{L^\infty}\right)\,dt} - 1,
\end{equation}
and the constants are independent of \(\nu\).

Further, we have the weighted estimate for \(k\geq 1\),
\begin{equation}\label{ModelAPrioriWeighted}
\begin{aligned}
&\|\<y\>^k\bw\|_{L^\infty([0,T];L^2)}^2 + \nu \|\<y\>^k\bw_{yy}\|_{L^2((0,T);L^2)}^2\\
&\quad\lesssim \sigma(T)\left(\|\<y\>^k\bff\|_{L^\infty([0,T];L^2)}^2 + \sum\limits_{n = 0}^2\|\<y\>^{k - 1}\partial_y^n\bw\|_{L^\infty([0,T];L^2)}^2\right),
\end{aligned}
\end{equation}
where again the constants are independent of \(\nu\).
\end{prop}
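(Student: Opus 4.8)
The plan is to prove both estimates by a single weighted $L^2$ energy argument, with the weight chosen precisely to cancel the $\bw_y^2$ terms that the variable top-order coefficient would otherwise generate. For \eqref{ModelAPriori}, set $\Phi = (1+\bg)^{(2\beta-3)/6}$ and take $E(t) = \int \Phi^2 \bw^2\,dy$. By \eqref{BasicAPriori} we have $\tfrac12 \le 1+\bg \le \tfrac32$, so $\Phi$ is bounded above and below by absolute constants, $E \simeq \|\bw\|_{L^2}^2$, the functions $\Phi,\Phi_y,\Phi_{yy},\Phi_{yyy}$ are controlled by $\|\bg\|_{W^{3,\infty}} \lesssim 1$, and $(\Phi^2)_t = \tfrac{2\beta-3}{3}(1+\bg)^{(2\beta-6)/3}\bg_t$ so that $|(\Phi^2)_t| \lesssim |\bg_t|$. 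Differentiating $E$ and substituting \eqref{ModelSmooth}, the two top-order terms contribute, after integration by parts, $\int (h_{yyy} - \beta q_{yy})\bw^2\,dy + \int(-3h_y + 2\beta q)\bw_y^2\,dy$ with $h = \Phi^2(1+\bg)$ and $q = \Phi^2\bg_y$; the defining property of $\Phi$ is exactly $-3h_y + 2\beta q \equiv 0$, so the $\bw_y^2$ term vanishes, leaving an $O(\|\bw\|_{L^2}^2)$ term (using \eqref{BasicAPriori} to bound $h_{yyy},q_{yy}$). The $\ba\bw_y$ term contributes $\int(\Phi^2\ba)_y\bw^2 \lesssim \|\bw\|_{L^2}^2$; the source is $-2\int\Phi^2\bw\bff \lesssim \|\bw\|_{L^2}^2 + \|\bff\|_{L^2}^2$; the weight derivative gives $\int(\Phi^2)_t\bw^2 \lesssim \|\bg_t\|_{L^\infty}\|\bw\|_{L^2}^2$; and the regularizing term, integrated by parts twice, yields $-2\nu\int\Phi^2\bw_{yy}^2$ plus terms bounded by $\nu(\|\bw\|_{L^2} + \|\bw_y\|_{L^2})\|\bw_{yy}\|_{L^2}$, which after $\|\bw_y\|_{L^2}^2 \lesssim \|\bw\|_{L^2}\|\bw_{yy}\|_{L^2}$ and a split with a small parameter are absorbed into $-c\nu\|\bw_{yy}\|_{L^2}^2$ (here $0 \le \nu \ll 1$ makes the leftover $\nu\|\bw\|_{L^2}^2$ harmless). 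This gives
\[
\dot E + c\nu\|\bw_{yy}\|_{L^2}^2 \lesssim (1 + \|\bg_t\|_{L^\infty})E + \|\bff\|_{L^2}^2, \qquad E(0) = 0.
\]

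To extract \eqref{ModelAPriori} I would apply Gr\"onwall with integrating factor $e^{-A(t)}$, $A(t) := C\int_0^t(1 + \|\bg_t\|_{L^\infty})\,ds$, obtaining $E(t) \lesssim \|\bff\|_{L^\infty L^2}^2\int_0^t e^{A(t) - A(s)}\,ds$; since $A' \ge C$, the elementary identity $\int_0^t e^{A(t) - A(s)}\,ds = e^{A(t)}\int_0^t e^{-A(s)}\,ds \le \tfrac1C(e^{A(t)} - 1) \le \tfrac1C\sigma(T)$ yields the factor $\sigma(T)$ (and, since $\sigma$ is increasing, the $L^\infty$-in-time bound). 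Integrating the differential inequality over $[0,T]$ and bounding $\int_0^T(1 + \|\bg_t\|_{L^\infty})E\,dt$ by the same device, together with $T \le \sigma(T)/C$, controls the $\nu\|\bw_{yy}\|_{L^2((0,T);L^2)}^2$ term.

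For \eqref{ModelAPrioriWeighted} I would run the identical computation for $E_k(t) := \int\langle y\rangle^{2k}\Phi^2\bw^2\,dy$. Each integration by parts now also produces terms in which a derivative falls on $\langle y\rangle^{2k}$; since $(\langle y\rangle^{2k})' \lesssim \langle y\rangle^{2k-1}$ these are the only new contributions. Terms of the form $\int\langle y\rangle^{2k-j}(\cdots)\bw^2\,dy$ with $j \ge 1$ are bounded by $\|\langle y\rangle^{k-1/2}\bw\|_{L^2}^2 \le \|\langle y\rangle^k\bw\|_{L^2}\|\langle y\rangle^{k-1}\bw\|_{L^2}$, the first factor absorbed into $\varepsilon E_k$ (hence into Gr\"onwall) and the second into the right-hand side of \eqref{ModelAPrioriWeighted}. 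The delicate term is the surviving $\bw_y^2$ term $-3\int(\langle y\rangle^{2k})'h\,\bw_y^2\,dy \lesssim \|\langle y\rangle^{k-1/2}\bw_y\|_{L^2}^2$; for this I would use the weighted interpolation inequality
\[
\|\langle y\rangle^{k-1/2}\bw_y\|_{L^2}^2 \lesssim \|\langle y\rangle^{k-1}\bw\|_{L^2}^2 + \|\langle y\rangle^{k-1}\bw_{yy}\|_{L^2}\|\langle y\rangle^k\bw\|_{L^2},
\]
proved by one integration by parts and an asymmetric split of the weight, so that after Young's inequality the first and third factors land on the right-hand side of \eqref{ModelAPrioriWeighted} while $\|\langle y\rangle^k\bw\|_{L^2}^2 \lesssim E_k$ is absorbed. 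The $\nu$-errors from $(\langle y\rangle^{2k}\Phi^2)_y$ and $(\langle y\rangle^{2k}\Phi^2)_{yy}$ are handled as in the flat case, their $\|\langle y\rangle^k\bw_y\|_{L^2}^2$ factors estimated by the analogous inequality with all weights $\langle y\rangle^k$ and rendered harmless by the prefactor $\nu$. One arrives at $\dot E_k + c\nu\|\langle y\rangle^k\bw_{yy}\|_{L^2}^2 \lesssim (1 + \|\bg_t\|_{L^\infty})E_k + \|\langle y\rangle^k\bff\|_{L^2}^2 + \sum_{n=0}^2\|\langle y\rangle^{k-1}\partial_y^n\bw\|_{L^\infty([0,T];L^2)}^2$, and the Gr\"onwall argument above closes it.

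The main obstacle I anticipate is not any individual estimate but the bookkeeping in the weighted case: one has to verify that \emph{every} term produced by moving a derivative off $\langle y\rangle^{2k}$ either carries a factor $\nu$ and absorbs into the regularizing term, or absorbs with a small constant into $E_k$ via Gr\"onwall, or is genuinely of ``one lower weight'' --- controlled by $\|\langle y\rangle^{k-1}\partial_y^n\bw\|_{L^2}$ with $n \le 2$ --- so that in particular no uncontrolled $\|\langle y\rangle^k\bw_y\|_{L^2}$, and no $\nu$-free $\|\langle y\rangle^k\bw_{yy}\|_{L^2}$, survives; the weighted interpolation inequality above is precisely what makes this go through. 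A secondary subtlety is that the cancellation of the $\bw_y^2$ terms relies on the exact relation between the coefficients of $\bw_{yyy}$ and $\bw_{yy}$ in \eqref{ModelSmooth} (the former being $1+\bg$, the latter a multiple of $\bg_y$), i.e.\ exactly the structure inherited from \eqref{equationW}.
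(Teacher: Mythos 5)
Your proposal is correct and follows essentially the same method as the paper: both use the weighted energy $\int(1+\bg)^{(2\beta-3)/3}\bw^2\,dy$ (your $\Phi$ is identical to the paper's $(1+\bg)^{\frac13\beta-\frac12}$), exploit the cancellation of the $\bw_y^2$ terms coming from the precise ratio of the $\bw_{yyy}$ and $\bw_{yy}$ coefficients, use the interpolation $\|\bw_y\|_{L^2}^2\lesssim\|\bw\|_{L^2}\|\bw_{yy}\|_{L^2}$ to absorb the viscous cross terms, and conclude by Gr\"onwall. The only (cosmetic) difference is in the weighted case: the paper sets $\tilde\bw=y^k\bw$ and notes that $\tilde\bw$ satisfies the same model equation with the forcing replaced by $y^k\bff-[\mathcal L,y^k]\bw$, so the unweighted computation can be reused verbatim and only the commutator terms need estimating, whereas you differentiate the weighted energy directly and re-track where derivatives land on $\langle y\rangle^{2k}$ — both routes yield the same error terms and the same weighted interpolation is what closes them.
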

\begin{proof}
Differentiating with respect to time and integrating by parts we obtain
\begin{align*}
\frac d{dt} \|(1 + \bg)^{\frac13\beta - \frac12} \bw\|_{L^2}^2 &= 2\< (1 + \bg)^{\frac23\beta - 1}\bw,\bw_t\> + (\frac23\beta - 1) \<(1 + \bg)^{\frac23\beta - 2}\bg_t\bw,\bw\>\\
&= - \frac13\beta\<((1 + \bg)^{\frac23\beta - 1}\bg_y)_{yy}\bw,\bw\>  + \<((1 + \bg)^{\frac23\beta - 1}\ba)_y\bw,\bw\>\\
&\quad - 2\<(1 + \bg)^{\frac23\beta - 1}\bff,\bw\> + (\frac23\beta - 1)\<(1 + \bg)^{\frac23\beta - 2}\bg_t\bw,\bw\>\\
&\quad - 2\nu\|(1 + \bg)^{\frac13\beta - \frac12}\bw_{yy}\|_{L^2}^2 + 2\nu\<((1 + \bg)^{\frac23\beta - 1})_{yy}\bw_y,\bw_y\>\\
&\quad - 2\nu\<((1 + \bg)^{\frac23\beta - 1})_{yy}\bw,\bw_{yy}\>.
\end{align*}
We note that from the hypothesis \eqref{BasicAPriori} we have \(1 + \bg \sim 1\). As a consequence we may interpolate to obtain,
\[
\|\bw_y\|_{L^2}^2\lesssim\|(1 + \bg)^{\frac13\beta - \frac12}\bw\|_{L^2}\|(1 + \bg)^{\frac13\beta - \frac12}\bw_{yy}\|_{L^2}.
\]
Choosing \(0<\nu\ll1\) sufficiently small we may apply the hypothesis \eqref{BasicAPriori} to obtain the estimate,
\[
\frac d{dt} \|(1 + \bg)^{\frac13\beta - \frac12} \bw\|_{L^2}^2 \lesssim \left(1 + \|\bg_t\|_{L^\infty}\right) \|(1 + \bg)^{\frac13\beta - \frac12} \bw\|_{L^2}^2 + \|(1 + \bg)^{ \frac13\beta -\frac12}\bff\|_{L^2}^2 - \nu \|(1 + \bg)^{\frac13\beta - \frac12}\bw_{yy}\|_{L^2}^2.
\]
The estimate \eqref{ModelAPriori} then follows from Gronwall's inequality.

To prove \eqref{ModelAPrioriWeighted} we define \(\tilde\bw = y^k \bw\) and \(\mc L = (1 + \bg)\partial_y^3 + \beta\bg_y\partial_y^2 + \ba\partial_y + \nu\partial_y^4\). We then observe that that \(\tilde\bw\) satisfies the equation \eqref{ModelSmooth} with \(\bff\) replaced by
\[
\tilde\bff = y^k\bff - [\mc L,y^k]\bw
\]
Integrating by parts in the terms involving \(\nu\) we obtain the estimate
\[
|\< (1 + \bg)^{\frac23\beta - 1}\tilde\bff,\tilde\bw\>| \lesssim \left(\|y^k\bff\|_{L^2} + \sum\limits_{n = 0}^2\|\<y\>^{k - 1}\partial_y^n\bw\|_{L^2}\right)\|(1 + \bg)^{\frac13\beta - \frac12}\tilde \bw\|_{L^2} + \nu \|\<y\>^k\bw_y\|_{L^2}^2.
\]
In order to bound \( \nu \|\<y\>^k\bw_y\|_{L^2}^2\), we wish to replace the term \(- 2\nu\|(1 + \bg)^{\frac13\beta - \frac12}\tilde\bw_{yy}\|_{L^2}^2\) that appears in the expression for \(\frac d{dt}\|(1 + \bg)^{\frac13\beta - \frac12}\tilde\bw\|_{L^2}^2\) by the term \(- 2\nu\|(1 + \bg)^{\frac13\beta - \frac12}y^k\bw_{yy}\|_{L^2}\). Consequently, we integrate by parts to obtain
\[
\left|\|(1 + \bg)^{\frac13\beta - \frac12}\tilde\bw_{yy}\|_{L^2}^2 - \|(1 + \bg)^{\frac13\beta - \frac12}y^k\bw_{yy}\|_{L^2}^2\right| \lesssim \|\<y\>^k\bw_y\|_{L^2}^2 + \|\<y\>^k\bw\|_{L^2}^2,
\]
and by interpolation we have,
\[
\|\<y\>^k\bw_y\|_{L^2}^2\lesssim \|(1 + \bg)^{\frac13\beta - \frac12}\<y\>^k\bw\|_{L^2}\|(1 + \bg)^{\frac13\beta - \frac12}\<y\>^k\bw_{yy}\|_{L^2} + \|(1 + \bg)^{\frac13\beta - \frac12}\<y\>^k\bw\|_{L^2}^2.
\]
Proceeding as in the proof of \eqref{ModelAPriori} we obtain the estimate \eqref{ModelAPrioriWeighted} whenever \(0\leq \nu\ll1\) is sufficiently small.
\end{proof}

%%%%%%%%%%%%%%%%%%%
%%% SMOOTH CASE %%%
%%%%%%%%%%%%%%%%%%%

\section{Local well-posedness for \(W\)}\label{sect:Main}

\subsection{Reformulating the problem}
\subsubsection{Lagrangian coordinates}
Considering the hydrodynamic form of  \eqref{QdV}
$$
u_t + (bu)_x = 0, \quad \mbox{where} \quad b = \frac{1}{2} (u^2)_{xx} + \mu u^2,
$$
and recalling the definition \eqref{VFFlow} of the Lagrangian map \(X(t,x)\),
\[
\begin{cases}
X_t(t,x) = b(t,X(t,x))\vspace{0.1cm}\\
X(0,x) = x,
\end{cases}
\]
we may write sufficiently smooth solutions to \eqref{QdV} in the form
\begin{equation}\label{LagrangianForm}
u(t,X(t,x)) = \frac1{X_x(t,x)}u_0(x).
\end{equation}

Assuming that the map \(X\) is sufficiently smooth, we define
\[
Z(t,x) = \frac1{X_x(t,x)} - 1,
\]
and compute the equation satisfied by \(Z\),
\begin{equation}\label{Zeqn}
Z_t + (1 + Z)^2\left(\frac12(1 + Z)\left((1 + Z)\left((1 + Z)^2\rho\right)_x\right)_x + \mu (1 + Z)^2\rho\right)_x = 0,
\end{equation}
where \(\rho = u_0^2\) is defined as above. We note that the equation \eqref{Zeqn} is an inhomogeneous equation with forcing term
\begin{equation}\label{Inhomogeneous}
F := \left( \frac12\rho_{xx} + \mu \rho\right)_x.
\end{equation}
In particular, \(F = 0\) for all \(x\in I\) whenever \(\rho\) corresponds to the initial data for a traveling wave solution of \eqref{QdV}.

\subsubsection{Change of independent coordinates $x \to y$}
The leading order linear part of \eqref{Zeqn} is given by,
\[
Z_t + \rho Z_{xxx} + \text{lower order terms} = 0.
\]
This motivates a change of variables, defining
\begin{equation}\label{COV}
y(x) = \int_0^x \frac1{\rho(s)^{\frac13}}\,ds,
\end{equation}
so that the map \(y\colon I\rightarrow \R\) is a diffeomorphism.

Next we compute the equation \eqref{Zeqn} in these coordinates,
\begin{equation}\label{Flattened}
Z_t + (1 + Z)^5Z_{yyy} + \frac52\frac{\rho_y}\rho(1 + Z)^5Z_{yy} + 7(1 + Z)^4Z_yZ_{yy} + R(y,Z,Z_y) + F = 0,
\end{equation}
where \(R\) is a polynomial in \(Z,Z_y\) satisfying \(R(y,0,0) = 0\) (see \eqref{Rdefn} for the explicit expression) and in the new coordinates the inhomogeneous term \eqref{Inhomogeneous} becomes
\begin{equation}\label{Inhomogeneousy}
F = \frac12\left(\frac{\rho_{yyy}}{\rho} - \frac43\frac{\rho_{yy}\rho_y}{\rho^2} + \frac 59\frac{\rho_y^3}{\rho^3}\right) + \mu \frac{\rho_y}{\rho^{\frac13}}.
\end{equation}
For completeness, the full computation is given in \eqref{FlattenedFull}. We remark that here and subsequently we slightly abuse notation writing \(Z(t,y)\) instead of \(Z(t,x(y))\) and similarly for \(\rho,F\).

\subsubsection{Change of dependent coordinates $Z \to W$}
In order to work in unweighted \(L^2\) spaces we take \(W = \rho^{\frac56}Z\). The equation \eqref{Flattened} may then be written as
\begin{equation}\label{FinalForm}
W_t + (1 + g)W_{yyy} + \frac75 g_yW_{yy} + N(y,W,W_y) + \rho^{\frac 56}F = 0,
\end{equation}
where \(N\) is a polynomial in \(W,W_y\) satisfying \(N(y,0,0) = 0\) (see \eqref{Ndefn} for the explicit expression) and we define
\begin{equation}\label{Metric}
g = (1 + \rho^{-\frac 56}W)^5 - 1.
\end{equation}
We will then consider the existence of solutions to \eqref{FinalForm} in the weighted Sobolev spaces \(\X^{N,K}\) defined as in \eqref{FunctionSpace}.

\subsection{The initial data}
We now describe our assumptions on the initial data \(\rho = u_0^2\), which are most easily stated in the \(y\)-coordinates. However, they may be phrased in the original coordinates using the change of variables \eqref{COV} and we compute a couple of special cases in Section~\ref{sect:Examples}.

We first make the assumption that there exists an integer \(K_0\geq 0\) and some \(\delta>0\) so that in the \(y\)-coordinates,
\begin{equation}\label{SlowDecayInit}
\inf\limits_{y\in \R} \rho(y)^{\frac56}\<y\>^{K_0} \geq \delta.
\end{equation}
It seems reasonable to expect this hypothesis is true whenever (in the \(x\)-coordiantes) \(\rho\in C^3(\R)\) has subcritical decay at both endpoints in the sense of Definition~\ref{df:Subcrit}. We verify that it is indeed true for polynomially decaying data in Section~\ref{sect:Examples}.

Next we assume that
\begin{equation}\label{InitBound}
\|\rho\|_{L^\infty} + \sum\limits_{n = 1}^{2K_0 + 7}\left\|\frac{\partial_y^n\rho}\rho\right\|_{L^2}\lesssim 1.
\end{equation}
Finally we assume that there exists some \(\cM>0\) so that the inhomogeneous term \(F\), defined as in \eqref{Inhomogeneousy}, satisfies,
\begin{equation}\label{L2Init}
\|\rho^{\frac56}F\|_{\X^{4,K_0}}\leq \cM,
\end{equation}
where the integer \(K_0\geq0\) appears in the lower bound \eqref{SlowDecayInit}.

\begin{rem}
For most estimates we will treat \(\rho\) and its derivatives as coefficients in the linear and nonlinear terms involving \(W\). In this case it will be more convenient to use that from the estimate \eqref{InitBound} and Sobolev embedding we have the pointwise bound,
\begin{equation}\label{PointwiseInit}
\| \rho \|_{L^\infty} + \sum\limits_{n = 1}^{2K_0 + 6}\left\|\frac{\partial_y^n \rho}\rho\right\|_{L^\infty}\lesssim1.
\end{equation}
The only exception to this will be when \(2K_0 + 7\) derivatives fall on \(\rho\), where we will instead use the estimate \eqref{InitBound} directly.
\end{rem}

\begin{rem}
We note when \(\mu = 1\) the assumptions on \(\rho\) do not preclude the case that \(\rho \in (\Phi_{B,c})^2 + \epsilon C^\infty_0(\R)\) where \(c>0\), \(-\frac14 c^2\leq B<0\) and \(0<\epsilon\ll_{B,c}1\), i.e. \(\rho\) is a small perturbation of the non-degenerate traveling wave \(\Phi_{B,c}\).
\end{rem}

\begin{rem}
The assumptions \eqref{L2Init},\eqref{PointwiseInit} are far from the optimal regularity. In future work we will show that it is possible to improve the regularity by taking further advantage of the dispersive smoothing effects similarly to~\cite{MR3263550,MR2955206,MR3417686,MR3280026}.
\end{rem}

The main result of this section is the existence of solutions to the equation \eqref{FinalForm}:
\begin{thm}\label{thm:ProxyLWP}
Suppose that in the \(x\)-coordinates \(\rho = u_0^2 \in C^3(\R)\) satisfies the subcritical left and right endpoint decay conditions \eqref{LEP} and \eqref{REP}. Suppose also that in the \(y\)-coordinates \(\rho\) satisfies the estimates \eqref{SlowDecayInit}, \eqref{InitBound} and \eqref{L2Init}. Then there exists a time \(T>0\) and a unique (classical) solution \(W\in C([0,T];\X^{4,K_0})\) to the equation \eqref{FinalForm}.
\end{thm}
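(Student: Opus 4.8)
\textbf{Proof strategy for Theorem~\ref{thm:ProxyLWP}.}
The plan is to construct solutions of \eqref{FinalForm} by a parabolic regularization and compactness argument, using the linear a priori estimates of Proposition~\ref{prop:MainAP} together with the semigroup bounds \eqref{semigroupInit}--\eqref{ParabolicReg} and the multilinear estimates of Lemma~\ref{lem:Multilinear}. First I would fix $0<\nu\ll1$ and consider the regularized equation obtained by adding $-\nu W_{yyyy}$ to the right-hand side of \eqref{FinalForm}. For fixed $\nu>0$ this is a semilinear parabolic problem: writing it in Duhamel form with the semigroup $e^{-\nu t\partial_y^4}$, a contraction mapping argument in $\cZ^{4,K_0}$ over a short time interval $[0,T(\nu)]$ produces a unique local solution $W^{(\nu)}$. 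Here one uses \eqref{semigroupInit} to handle the (zero) initial data contribution, \eqref{ParabolicReg} to absorb the $\nu^{-3/4}$ loss into a small power of $T$, Lemma~\ref{lem:Multilinear} to estimate the polynomial nonlinearity $N$ and the terms $g W_{yyy}$, $g_y W_{yy}$ (all of which are of the form $L_n[y,W,\dots,W]$ with coefficients built from $\rho$ and its derivatives controlled by \eqref{PointwiseInit}), and the hypothesis \eqref{L2Init} for the forcing term $\rho^{5/6}F$. One must be slightly careful that $g=(1+\rho^{-5/6}W)^5-1$ is genuinely polynomial in $\rho^{-5/6}W = \<y\>^{K_0}$-type weighted quantities; the lower bound \eqref{SlowDecayInit} is exactly what makes $\rho^{-5/6}\lesssim\<y\>^{K_0}$ so that $\rho^{-5/6}W$ lands in the right weighted space and $L_n$ applies.

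The second and central step is to upgrade $W^{(\nu)}$ to a solution on a $\nu$-independent time interval $[0,T]$ with $\nu$-independent bounds in $\X^{4,K_0}$. For this I would differentiate \eqref{FinalForm} (with the $-\nu W_{yyyy}$ term) $n$ times in $y$, for $0\le n\le 2K_0+4$, observing that $\partial_y^n W$ satisfies an equation of the model form \eqref{ModelSmooth} with $\bg = g$, $\beta = 7/5$, $\ba$ some bounded coefficient, and $\bff$ a sum of commutator terms of the type $L_{n}[y,W,\dots,W]$ plus $\partial_y^n(\rho^{5/6}F)$ — crucially, the top-order term $\partial_y^{n+3}W$ has coefficient $(1+g)$ and the next-order term $\partial_y^{n+2}W$ has the special coefficient $\tfrac75 g_y$ so that the weight $(1+g)^{\frac13\beta - \frac12} = (1+g)^{-1/30}$ in Proposition~\ref{prop:MainAP} is well-defined. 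Applying the unweighted estimate \eqref{ModelAPriori} for the top derivatives and then the weighted estimate \eqref{ModelAPrioriWeighted} inductively in $k$ from $k=1$ up to $k=K_0$ (feeding the lower-weight norms on the right-hand side from the previous induction step), and summing over $n$, yields a closed differential inequality of the schematic form
\[
\frac{d}{dt}\,\mathcal{E}(t) \le C\bigl(1+\|g_t\|_{L^\infty}\bigr)\bigl(\mathcal{E}(t) + P(\mathcal{E}(t))\bigr) + C\cM^2,
\]
where $\mathcal{E}(t) = \|W^{(\nu)}(t)\|_{\X^{4,K_0}}^2$ and $P$ is a polynomial coming from the nonlinear terms; the key point is that $\|g_t\|_{L^\infty}$ is itself controlled by $\|W_t\|$, which via the equation is controlled by $\mathcal{E}$, so after possibly shrinking the interval this gives $\sup_{[0,T]}\mathcal{E}(t)\lesssim \cM^2 + 1$ with $T$ and the bound independent of $\nu$, together with the parabolic gain $\nu\|\<y\>^k\partial_y^{n+2}W^{(\nu)}\|_{L^2((0,T);L^2)}^2\lesssim 1$.

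The final step is to pass to the limit $\nu\to0$. From the uniform bound $W^{(\nu)}$ is bounded in $L^\infty([0,T];\X^{4,K_0})$, and from the equation $\partial_t W^{(\nu)}$ is bounded in $L^\infty([0,T];\X^{1,K_0})$ (the $\nu W_{yyyy}$ term being bounded in a weaker norm by the parabolic estimate); by Aubin--Lions and the local compactness of the embedding \eqref{Sobolev} noted in Remark~\ref{rem:Compact1}, a subsequence converges, locally uniformly together with enough derivatives, to some $W\in C([0,T];\X^{4,K_0})$ (using interpolation \eqref{Interpolate} and the frequency-truncation bounds \eqref{Bernie} to pass from weak-$\ast$ convergence in $\X^{4,K_0}$ and strong local convergence to genuine convergence in the relevant topology). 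Remark~\ref{rem:Compact2} guarantees the nonlinear terms $L_n[y,W^{(\nu)},\dots,W^{(\nu)}]$ converge weakly to $L_n[y,W,\dots,W]$, so $W$ solves \eqref{FinalForm} with $\nu=0$; the regularity $W\in C([0,T];\X^{4,K_0})$ plus Sobolev embedding \eqref{Sobolev} makes it a classical solution. Uniqueness for \eqref{FinalForm} follows by running the energy estimate \eqref{ModelAPriori} on the difference of two solutions — the difference solves a linear equation of the form \eqref{ModelSmooth} with coefficients and forcing built from the two solutions and their difference, and Gronwall closes. I expect the main obstacle to be the second step: verifying that the hierarchy of weighted energy estimates actually closes, i.e.\ that each commutator term $[\mc L,\<y\>^k]\partial_y^n W$ and each nonlinear term genuinely fits the structure $L_n$ with $n\le 2(K_0-k)+4$ so that Lemma~\ref{lem:Multilinear} applies, and that no derivative or weight is lost in the process — this is exactly where the precise exponent $5/6$ (and hence the choice $7/5 = 2\cdot\frac56\cdot\frac{?}{}$, i.e.\ the disappearance of the quadratic $W_{yy}$ term noted after \eqref{equationW}) and the index budget $2K_0+7$ in \eqref{InitBound} are calibrated.
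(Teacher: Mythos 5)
Your overall scheme (parabolic regularization, contraction in $\cZ^{4,K_0}$, bootstrap to $\nu$-independent bounds by writing $\partial_y^n W$ as a solution of the model equation and applying Proposition~\ref{prop:MainAP} inductively in the weight index $k$, then compactness as $\nu\to0$ and uniqueness via the linearized estimate) is the same as the paper's, and the first two steps and the uniqueness step are correct as you describe them. Two small inaccuracies: the time derivative $\partial_t W^{(\nu)}$ is only bounded in $L^2([0,T];\X^{1,K_0})$, not $L^\infty$, because $\nu W^{(\nu)}_{yyyy}$ is controlled via the parabolic gain only after time-integration; and the bootstrap closure needs $\sigma(T)\cM^2\ll\delta^2$ (the small constant in~\eqref{SlowDecayInit}), not merely $\sup\mathcal{E}\lesssim\cM^2+1$.

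The genuine gap is in your compactness step, specifically in the claim that the limit $W$ lies in $C([0,T];\X^{4,K_0})$. Weak-$\ast$ compactness in $L^\infty_t\X^{4,K_0}$ combined with Aubin--Lions and local compactness of~\eqref{Sobolev} only gives a distributional solution in $L^\infty([0,T];\X^{4,K_0})\cap W^{1,\infty}((0,T);\X^{1,K_0})$ satisfying the equation a.e.; it does \emph{not} give continuity in the top-order norm. The interpolation~\eqref{Interpolate} and the Bernstein bounds~\eqref{Bernie} that you invoke require control in $\X^{N,K_0}$ for some $N>4$ to make the high-frequency tail $P_{>j}W$ uniformly small in $\X^{4,K_0}$, but the uniform bounds you have constructed are only in $\X^{4,K_0}$ itself. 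The paper closes this by a frequency-envelope argument that mollifies the \emph{data}: it replaces $\ln\rho$ and $\rho^{5/6}F$ by their low-frequency truncations $P_{\leq j}(\cdot)$, solves the resulting problem to obtain $W_{\leq j}\in C([0,T];\X^{4,K_0})$ with a higher-regularity bound $\|W_{\leq j}\|_{L^\infty\X^{7,K_0}}\lesssim 2^{3j}\delta$, uses the linearized a priori estimate (Proposition~\ref{prop:LinearizedAP}) to show $\|W-W_{\leq j}\|_{L^\infty\X^{1,K_0}}=o(2^{-3j}\delta)$, and then interpolates between these two to conclude that $W_{\leq j}$ is Cauchy in $C([0,T];\X^{4,K_0})$ with limit $W$. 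Without this extra step — or some substitute supplying $\X^{>4,K_0}$ control — your argument establishes only that $W\in L^\infty([0,T];\X^{4,K_0})$ solves~\eqref{FinalForm} a.e., which falls short of the theorem's conclusion.
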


\begin{rem}
We note that using the usual frequency envelope approach it is possible to show that in the \(y\)-coordinates the map \(\ln \rho\mapsto W\) is continuous as map from \(L^\infty\cap\dot H^1\cap \dot H^{2K_0+7}\rightarrow\X^{4,K_0}\). However, as the y-coordinate is defined in terms of \(\rho\), this does not imply continuous dependence on the initial data for the original equation \eqref{QdV}. Similarly, the uniqueness stated in Theorem~\ref{thm:ProxyLWP} does not imply uniqueness for \eqref{QdV} so we must apply Theorem~\ref{thm:Unique} instead. As a consequence we omit the proof of continuity of the solution map for \eqref{FinalForm} and only include the proof of uniqueness because it is brief.
\end{rem}

\subsection{Two particular cases}\label{sect:Examples}

In order to better understand the conditions~\eqref{SlowDecayInit}--\eqref{PointwiseInit}, we will illustrate them in two specific cases.

\bigskip

\noindent \underline{Case 1: $\operatorname{Supp} \rho = [-1,1]$, with $\rho(s) \sim (1-s)^\alpha$, $\alpha>3$, as $s \to 1$}. By this, we mean that $\rho$ is sufficiently smooth in $(-1,1)$, and that for sufficiently many derivatives of $\rho$, there holds $\partial^k_s \rho(s) = C_k (1-s)^{\alpha-k} + o((1-s)^{\alpha-k})$, for a constant $C_k \in \mathbb{R}$. Notice that we only discuss here the right endpoint, but the left endpoint can of course be dealt with symmetrically. Then
$$
y(x) = \int_0^x \frac{ds}{(1-s)^{\alpha/3}} \sim (1-x)^{1-\frac{\alpha}{3}}.
$$
This implies that, in the coordinate $y$,
$$
\rho(y) = \rho(x(y)) \sim y^{\frac{3\alpha}{3-\alpha}} \quad \mbox{and} \quad F(y) \sim y^{-3}.
$$
Therefore, the condition~\eqref{InitBound} always holds, while the conditions~\eqref{SlowDecayInit} and~\eqref{L2Init} become, respectively,
$$
K_0 > \frac{5}{2} \frac{\alpha}{\alpha-3} \quad \mbox{and} \quad K_0 < \frac{5}{2} \frac{2\alpha -3}{\alpha - 3}.
$$
For $\alpha > 3$, there exists an integer $K_0$ satisfying these two constraints.

\bigskip

\noindent \underline{Case 2: $\operatorname{Supp} \rho = \mathbb{R}$, with $\rho(s) \sim s^{-\beta}$, $\beta \geq 0$, as $s \to \infty$} (once again, the case $s\to - \infty$ can be dealt with symmetrically). Then
$$
y(x) = \int_0^x s^{\beta/3}\,ds \sim x^{1 + \frac{\beta}{3}}.
$$
This implies that
$$
\rho(y) \sim y^{- \frac{3\beta}{3+\beta}} \quad \mbox{and} \quad F(y) \sim y^{-\frac{3\beta + 3}{\beta + 3}}.
$$
Therefore, the condition~\eqref{InitBound} always holds, while the conditions~\eqref{SlowDecayInit} and~\eqref{L2Init} become, respectively,
$$
K_0 > \frac{5}{2} \frac{\beta}{\beta + 3} \quad \mbox{and} \quad K_0 < \frac{10 \beta +3}{2\beta + 6}.
$$
For $\beta \geq 0$, there exists an integer $K_0$ satisfying these two constraints.

\subsection{Existence of solutions}
We now consider a parabolic regularization of \eqref{FinalForm} with initial data \(W_0\),
\begin{equation}\label{FinalFormReg}
\begin{cases}
W_t + (1 + g)W_{yyy} + \frac75g_{y}W_{yy} + N + \rho^{\frac56}F = - \nu \partial_y^4W,\vspace{0.1cm}\\
W(0) = W_0,
\end{cases}
\end{equation}
where \(g\) is defined as in \eqref{Metric}, \(N\) as in \eqref{Ndefn} and \(F\) as in \eqref{Inhomogeneousy}.

We then have the following existence result:
\begin{lem}\label{lem:MildExistence}
Let $\delta > 0$ be the constant defined in \eqref{SlowDecayInit} and \(W_0\in \X^{4,K_0}\) satisfy the estimate
\begin{equation}\label{ContiInit}
\|W_0\|_{\X^{4,K_0}}\leq \delta.
\end{equation}
Then, for each \(0<\nu\ll 1\) sufficiently small there exists a time \(T = T(\nu)>0\) and a (mild) solution \(W\in\cZ^{4,K_0}\) of the equation \eqref{FinalFormReg}.
\end{lem}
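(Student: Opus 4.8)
The plan is to solve the regularized equation \eqref{FinalFormReg} by a contraction mapping argument in the space $\cZ^{4,K_0}$, exploiting the smoothing of the parabolic semigroup $e^{-\nu t\partial_y^4}$. Writing \eqref{FinalFormReg} in Duhamel form,
\[
W(t) = e^{-\nu t\partial_y^4}W_0 - \int_0^t e^{-\nu(t-s)\partial_y^4}\,G\big(W\big)(s)\,ds =: \Psi(W)(t),
\]
where $G(W) = (1+g)W_{yyy} + \tfrac75 g_y W_{yy} + N(y,W,W_y) + \rho^{5/6}F$ and $g = (1+\rho^{-5/6}W)^5 - 1$, we look for a fixed point of $\Psi$ in the closed ball $B_R = \{\,\|W\|_{\cZ^{4,K_0}}\le R\,\}$ with $R := 2C(\delta + \cM)$; we note in passing that in the application $W_0 = 0$, which is allowed since $\|W_0\|_{\X^{4,K_0}}=0\le\delta$.

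The first step is to record the mapping estimates for the nonlinearity $G$. Using \eqref{SlowDecayInit} in the form $\rho^{-5/6}\lesssim\delta^{-1}\langle y\rangle^{K_0}$ together with the pointwise bounds \eqref{PointwiseInit} on the derivatives of $\log\rho$, each monomial appearing in $g$, $g_y$ and $N$ can be written as a finite sum of multilinear operators $L_m[y,W,\dots,W]$ in the sense of Section~\ref{sect:FunctionSpaces}, with smooth, uniformly bounded coefficients; the $\rho^{-5/6}$ powers match the degree in $W$ and are exactly absorbed into the $\delta^{-(M-1)}\langle y\rangle^{K_0(M-1)}$ prefactor of $L_m$. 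The top-order terms $(1+g)W_{yyy}$ and $\tfrac75 g_y W_{yy}$ carry three, respectively two, derivatives on $W$, which are controlled by the $(\nu t)^{3/4}\X^{7,K_0}$ and $(\nu t)^{1/2}\X^{6,K_0}$ components of the $\cZ^{4,K_0}$-norm. Combining this with Lemma~\ref{lem:Multilinear} and the pointwise estimates \eqref{Sobolev}, \eqref{PointwiseInit}, and bounding the inhomogeneous contribution by \eqref{L2Init}, we obtain, for $W\in B_R$,
\[
\big\|(\nu t)^{3/4}\,G(W)\big\|_{L^\infty([0,T];\X^{4,K_0})}\;\lesssim\; P(R) + \cM,
\]
for a polynomial $P$ with $P(0)=0$. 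An analogous telescoping computation, using the multilinearity of the $L_m$ and the bilinearity of products, gives the Lipschitz bound $\big\|(\nu t)^{3/4}\big(G(W_1)-G(W_2)\big)\big\|_{L^\infty_T\X^{4,K_0}} \lesssim C(R)\,\|W_1-W_2\|_{\cZ^{4,K_0}}$.

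Next I would apply the semigroup estimates of Section~\ref{sect:FunctionSpaces}. By \eqref{semigroupInit}, $\|e^{-\nu t\partial_y^4}W_0\|_{\cZ^{4,K_0}}\lesssim \|W_0\|_{\X^{4,K_0}}\le\delta$. By \eqref{ParabolicReg}, together with its routine higher-derivative variants (obtained exactly as in the proof of that lemma by commuting $\partial_y^j$ through the kernel at the cost of $(\nu t)^{-j/4}$, which are needed to recover the $\X^{4+n,K_0}$ components for $n=1,2,3$),
\[
\Big\|\int_0^t e^{-\nu(t-s)\partial_y^4}G(W)(s)\,ds\Big\|_{\cZ^{4,K_0}}\;\lesssim\;\nu^{-3/4}T^{1/4}\,\big\|(\nu t)^{3/4}G(W)\big\|_{L^\infty_T\X^{4,K_0}}.
\]
Hence $\|\Psi(W)\|_{\cZ^{4,K_0}}\le C\delta + C\nu^{-3/4}T^{1/4}\big(P(R)+\cM\big)$ and $\|\Psi(W_1)-\Psi(W_2)\|_{\cZ^{4,K_0}}\le C\nu^{-3/4}T^{1/4}C(R)\,\|W_1-W_2\|_{\cZ^{4,K_0}}$. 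With $R$ fixed as above, we then choose $\nu\ll1$ and $T=T(\nu)>0$ small enough (so that in particular $T\lesssim\nu^{-1}$, as required by the semigroup lemma, and $C\nu^{-3/4}T^{1/4}\big(P(R)+\cM\big)\le R/2$ and $C\nu^{-3/4}T^{1/4}C(R)\le 1/2$); then $\Psi$ maps $B_R$ into itself and is a contraction there, and Banach's fixed point theorem produces the mild solution $W\in\cZ^{4,K_0}$.

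The main obstacle is the second step: checking that every term of $g$, $g_y$ and $N$ — which a priori contains the singular factor $\rho^{-5/6}$ raised to a power — genuinely reassembles into the form $L_m[y,W,\dots,W]$ with honestly bounded coefficients, so that Lemma~\ref{lem:Multilinear} applies, and, relatedly, bookkeeping which $(\nu t)^{n/4}\X^{4+n,K_0}$ component of the $\cZ$-norm absorbs the extra derivatives produced by $(1+g)W_{yyy}$ and $g_y W_{yy}$. Once this accounting is carried out, the $\nu$-dependence is harmless precisely because $T$ is allowed to shrink with $\nu$, and the rest is a standard parabolic fixed point argument.
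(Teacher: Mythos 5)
Your proposal follows essentially the same route as the paper: write the regularized equation in Duhamel form, verify that the nonlinearity $G$ fits the multilinear framework of Lemma~\ref{lem:Multilinear} with coefficients controlled via \eqref{SlowDecayInit} and \eqref{PointwiseInit}, apply the semigroup estimates \eqref{semigroupInit} and \eqref{ParabolicReg}, and run a contraction in $\cZ^{4,K_0}$ on a timescale $T(\nu)$ shrinking with $\nu$. One small clarification: the higher-derivative variants you propose to derive are already built into \eqref{ParabolicReg}, since its left-hand side is measured in the full $\cZ^{N,K}$ norm (which includes the $(\nu t)^{n/4}\X^{N+n,K}$ components for $n\le3$), so no further commuting is required. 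The paper also takes the slightly cleaner ball radius $R=\cK\delta$ with $\cK\sim1$, which makes the bound $\sum_M\delta^{-(M-1)}R^M\lesssim\delta$ immediate, and it singles out the top case $n=2K_0+4$ in the multilinear expansion of $\partial_y^nG$, where $2K_0+7$ derivatives can fall on $\rho$ and one must invoke the $L^2$ bound \eqref{InitBound} directly rather than the pointwise bound \eqref{PointwiseInit}; these are the only substantive bookkeeping details missing from your sketch.
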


\begin{proof}
We take \(B\subset \cZ^{4,K_0}\) to be the ball
\[
B = \left\{ W\in \cZ^{4,K_0}: \|W\|_{\cZ^{4,K_0}}\leq\cK \delta\right\},
\]
where the constant \(\cK\sim 1\) may be chosen independently of \(\rho,W_0,\nu\). We then define
\[
\mc T[W] = e^{-\nu t\partial_x^4}W_0 + \int_0^t e^{-\nu(t - s)\partial_y^4}\left(G(s) + \rho^{\frac56}F\right)\,ds,
\]
where
\[
G = (1 + g)W_{yyy} + \frac75g_yW_{yy} + N.
\]

From the semigroup estimate \eqref{semigroupInit} and the estimate \eqref{ContiInit} we have the estimate
\[
\|e^{-\nu\partial_x^4}W_0\|_{\cZ^{4,K_0}}\lesssim \|W_0\|_{\X^{4,K_0}}\lesssim \delta.
\]
Similarly, from the estimate \eqref{L2Init} for the inhomogeneous term \(F\) and the semigroup estimate \eqref{ParabolicReg} we have the estimate,
\[
\left\|\int_0^t e^{-\nu(t - s)\partial_y^4} \rho^{\frac56}F\,dy\right\|_{\cZ^{4,K_0}}\lesssim \nu^{-\frac34}T^{\frac14}\cM,
\]
provided \(T\lesssim \nu^{-1}\).

For \(0\leq n\leq 2K_0 + 3\) we may write \(\partial_y^nG\) as a multilinear operator of the form,
\[
\partial_y^nG = \sum\limits_{M = 1}^6\sum\limits_{|\alpha|\leq 3}L_n[y,\overbrace{\partial_y^{\alpha_1}W,\dots,\partial_y^{\alpha_M}W}^M],
\]
where the coefficients of the \(L_n\) may be uniformly bounded in \(L^\infty\) using the lower bound \eqref{SlowDecayInit} and pointwise estimate \eqref{PointwiseInit} for \(\rho\). When \(n = 2K_0 + 4\) we may instead write
\[
\begin{aligned}
\partial_y^{2K_0 + 4}G &= \sum\limits_{M = 1}^6\sum\limits_{|\alpha|\leq 3}L_{2K_0 + 4}[y,\overbrace{\partial_y^{\alpha_1}W,\dots,\partial_y^{\alpha_M}W}^M]\\
&\quad + \frac{\partial_y^{2K_0 + 7}\rho}{\rho}\left(\frac12\rho^{\frac 56}\left((1 + \rho^{-\frac56}W)^5 - 1\right) - \frac13(1 + \rho^{-\frac56} W)^5W\right),
\end{aligned}
\]
where the coefficients of the \(L_{2K_0 + 4}\) are uniformly bounded and the final term may be bounded by estimating \(\dfrac{\partial_y^{2K_0 + 7}\rho}{\rho}\in L^2\) using \eqref{InitBound} and the remaining terms in \(L^\infty\) using \eqref{Sobolev}. As a consequence, we may apply the multilinear estimate \eqref{MultilinearBound} to obtain
\[
\|G\|_{\X^{4,K_0}}\lesssim (\nu t)^{-\frac34} (1 + \delta^{-1}\|W\|_{\cZ^{4,K_0}})^5\|W\|_{\cZ^{4,K_0}}\lesssim (\nu t)^{-\frac34}\delta
\]
provided \(0< T \lesssim \nu^{-1}\). We then apply the semigroup estimate \eqref{ParabolicReg} to obtain
\[
\left\| \int_0^te^{-\nu(t - s)\partial_y^4} G(s)\,ds\right\|_{\cZ^{4,K_0}}\lesssim \nu^{-\frac34} T^{\frac 14}\delta.
\]

Applying identical estimates for the difference \(\mc T[W^{(1)}] - \mc T[W^{(2)}]\) we see that we may choose the timescale \(0<T = T(\nu)\ll 1\) sufficiently small so that the map \(\mc T\colon B\rightarrow B\) is a contraction on \(B\). The result then follows from an application of the contraction principle.
\end{proof}

In order to pass to a limit as \(\nu\rightarrow0\) in the equation \eqref{FinalFormReg} we must prove uniform (in \(\nu\)) estimates for the solutions to \eqref{FinalFormReg}. However, these will follow directly from the a priori estimates for the model equation:
\begin{prop}\label{prop:UniBounds}
Let \(W_0 = 0\) and \(W\in \cZ^{4,K_0}\) be a mild solution of \eqref{FinalFormReg}. Then there exists a time \(T_* = T_*(K_0,\cM,\delta)>0\) so that provided \(0<T\leq T_*\) we have the estimate,
\begin{equation}\label{UniformAPriori}
\|W\|_{L^\infty([0,T];\X^{4,K_0})}^2 + \nu \|W\|_{L^2((0,T);\X^{6,K_0})}^2 \ll \delta^2,
\end{equation}
where the constants are independent of sufficiently small \(0<\nu\ll1\).
\end{prop}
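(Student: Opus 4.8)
The plan is to run a continuity (bootstrap) argument for the given mild solution. Since $W\in\cZ^{4,K_0}$ and $W(0)=0$, the quantity $E(T')=\|W\|_{L^\infty([0,T'];\X^{4,K_0})}^2+\nu\|W\|_{L^2((0,T');\X^{6,K_0})}^2$ is continuous in $T'$ with $E(0)=0$, and likewise $\|W\|_{\cZ^{4,K_0}([0,T'])}$ is continuous and vanishes at $T'=0$. Fix a small absolute constant $\eta\ll1$ and let $0<T'\le T$ be maximal such that $E(T')\le\eta^2\delta^2$ and $\|W\|_{\cZ^{4,K_0}([0,T'])}\le\cK\delta$ (the latter being the bound produced by Lemma~\ref{lem:MildExistence}). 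The goal is to show $E(T')\ll\delta^2$ once $T'\le T_*(K_0,\cM,\delta)$, which improves the first bound and, by continuity and maximality, forces $T'=T$, giving~\eqref{UniformAPriori}. By parabolic regularity $W$ is smooth and localized for $t>0$, so Proposition~\ref{prop:MainAP} applies on each $[\epsilon,T']$ and one lets $\epsilon\downarrow0$.

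\emph{Setting up the model equations.} For each $0\le k\le K_0$ and $0\le n\le 2(K_0-k)+4$, differentiating~\eqref{FinalFormReg} $n$ times shows that $\bw:=\partial_y^nW$ solves~\eqref{ModelSmooth} with $\bg=g$, $\beta=n+\tfrac75$, with $\ba$ equal to the coefficient of $\partial_y^{n+1}W$ (a fixed multiple of $g_{yy}$, plus the coefficient of $W_y$ in the linear part of $N$), and with $\bff$ the sum of $\partial_y^n(\rho^{5/6}F)$, the genuinely nonlinear contribution of $\partial_y^nN$, and the commutator remainder $[\,(1+g)\partial_y^3+\tfrac75g_y\partial_y^2\,,\,\partial_y^n\,]W$ with its $\partial_y^{n+2}W$ and $\partial_y^{n+1}W$ terms removed. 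The structural points that make this work are: the coefficient of $\partial_y^{n+2}W$ is exactly the pure multiple $(n+\tfrac75)g_y$ of $g_y$ (this is precisely why the dependent variable $W$ was normalised with the $\tfrac75$), so it is absorbed into the $\beta\bg_y\bw_{yy}$ term of the model; the only $\partial_y^{n+1}W$ term has coefficient in $W^{1,\infty}$ and sits in $\ba\bw_y$; and since $N=N(y,W,W_y)$ involves no $W_{yy}$ or $W_{yyy}$, nothing with $n+2$ or $n+3$ derivatives on $W$ survives in $\bff$. The hypotheses~\eqref{BasicAPriori} follow from $g=(1+\rho^{-5/6}W)^5-1$, the lower bound $\rho^{-5/6}\lesssim\delta^{-1}\<y\>^{K_0}$ from~\eqref{SlowDecayInit}, the pointwise control~\eqref{PointwiseInit} of $\rho$ and its derivatives, and the Sobolev estimate~\eqref{Sobolev} applied to $\<y\>^{K_0}\partial_y^jW$ for $j\le3$: under the bootstrap $\|W\|_{\X^{4,K_0}}\le\eta\delta$ these give $\|\rho^{-5/6}W\|_{W^{3,\infty}}\lesssim\eta$, hence $\|g\|_{L^\infty}\le\tfrac12$ and $\|g\|_{W^{3,\infty}}+\|\ba\|_{W^{1,\infty}}\lesssim1$.

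\emph{Estimating the forcing and closing.} Exactly as in the proof of Lemma~\ref{lem:MildExistence}, the nonlinear part of $\partial_y^nN$ and the commutator remainder are finite sums of multilinear operators $L_n[y,W,\dots,W]$ with $M\le6$ factors and coefficients bounded in $L^\infty$ using~\eqref{SlowDecayInit} and~\eqref{PointwiseInit} (with one exceptional term when $n$ is maximal and $2K_0+7$ derivatives fall on $\rho$, estimated directly via~\eqref{InitBound}), so Lemma~\ref{lem:Multilinear} gives $\|\<y\>^k\bff\|_{L^2}\lesssim\cM+\delta^{-1}\|W\|_{\X^{4,K_0}}^2+\dots$. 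Summing~\eqref{ModelAPriori} over $n$ for $k=0$ and~\eqref{ModelAPrioriWeighted} over $n$ for $1\le k\le K_0$, and noting that the lower-weight quantities $\<y\>^{k-1}\partial_y^m\bw$ on the right of~\eqref{ModelAPrioriWeighted} are themselves components of $\|W\|_{\X^{4,K_0}}$, we obtain
\[
E(T')\lesssim\sigma(T')\Bigl(\cM^2+\|W\|_{L^\infty([0,T'];\X^{4,K_0})}^2+\delta^{-2}\|W\|_{L^\infty([0,T'];\X^{4,K_0})}^4\Bigr).
\]
Finally one checks that $\int_0^{T'}(1+\|g_t\|_{L^\infty})\,dt\to0$ as $T'\to0$ uniformly in $0<\nu\le1$: substituting~\eqref{FinalFormReg} for $W_t$ into $g_t=5(1+\rho^{-5/6}W)^4\rho^{-5/6}W_t$ and using~\eqref{SlowDecayInit}, \eqref{PointwiseInit} and~\eqref{Sobolev} bounds $\|g_t\|_{L^\infty}$ by $\delta^{-1}(\|W\|_{\X^{4,K_0}}+\cM)$ plus $\nu\|\rho^{-5/6}\partial_y^4W\|_{L^\infty}\lesssim\nu^{3/4}t^{-1/4}\delta^{-1}\|W\|_{\cZ^{4,K_0}}$, whose time integral is $O(T'^{3/4})$. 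Hence $\sigma(T')\to0$, and choosing $T_*$ so that the constant times $\sigma(T_*)(1+\eta^2)$ is $\le\tfrac12$, the $\|W\|^2$ and (using $\delta^{-2}\|W\|^2\le\eta^2$) $\|W\|^4$ terms are absorbed into the left, leaving $E(T')\lesssim\sigma(T')\cM^2\ll\delta^2$, which proves~\eqref{UniformAPriori} and improves the bootstrap.

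\emph{Main obstacle.} The delicate part is the bookkeeping in the second and third steps: one must check that after differentiating, every term carrying $n+2$ or $n+3$ derivatives on $W$ is precisely a leading term of the model equation (so Proposition~\ref{prop:MainAP}, through the conjugation by $(1+g)^{\beta/3-1/2}$, handles it with no loss of derivatives), that the residual terms genuinely fit the class of Lemma~\ref{lem:Multilinear} with $\max\alpha_m\le n$, and that the closing inequality is a true inequality rather than a tautology — which works only because $\cM$ enters multiplied by $\sigma(T')\to0$ while every $W$-dependent term is at least quadratic. At the top index $n=2(K_0-k)+4$ the multilinear estimates are borderline, which forces the $2K_0+7$-derivative-on-$\rho$ term to be split off for~\eqref{InitBound} and occasionally requires the odd-order refinement of Remark~\ref{rem:OddSobolev}.
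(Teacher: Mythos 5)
Your overall strategy mirrors the paper's: bootstrap, rewrite $\partial_y^n W$ as a solution of the model equation~\eqref{ModelSmooth} with $\bg=g$ and $\beta = n + \tfrac75$, verify~\eqref{BasicAPriori} and control $\int\|g_t\|_{L^\infty}$, estimate the forcing via Lemma~\ref{lem:Multilinear}, then close. However, there is a genuine gap in the way you set up the continuity argument. You include \emph{two} conditions in the bootstrap, $E(T')\le\eta^2\delta^2$ and $\|W\|_{\cZ^{4,K_0}([0,T'])}\le\cK\delta$, and you use the second (via $\|W\|_{\X^{5,K_0}}\lesssim(\nu t)^{-1/4}\|W\|_{\cZ^{4,K_0}}$) to control the $\nu W_{yyyy}$ contribution to $g_t$. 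But your a priori estimate only improves the \emph{first} condition; Proposition~\ref{prop:MainAP} yields nothing about the $\cZ$ norm, and Lemma~\ref{lem:MildExistence} only gives $\|W\|_{\cZ^{4,K_0}}\le\cK\delta$ on the $\nu$-dependent interval $[0,T(\nu)]$, not on any $\nu$-independent $[0,T_*]$. So ``by continuity and maximality, forces $T'=T$'' does not follow: the maximal $T'$ could be the time at which the $\cZ$ bound, not the $E$ bound, becomes tight, and nothing in the argument prevents that. Since the whole point of Proposition~\ref{prop:UniBounds} is a $\nu$-independent time $T_*$, a bootstrap hypothesis that is itself only known on a $\nu$-dependent interval cannot be part of the closing loop.

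The fix — which is what the paper does — is to never bring the $\cZ$ norm into the bootstrap at all. Instead, bound the problematic piece by $\nu\|W\|_{\X^{5,K_0}}\le\nu\|W\|_{\X^{6,K_0}}$ and use Cauchy--Schwarz in time together with $\nu\|W\|_{L^2((0,T');\X^{6,K_0})}^2\le E(T')$, which \emph{is} part of the bootstrap quantity being improved:
\[
\int_0^{T'}\nu\|W\|_{\X^{5,K_0}}\,dt \;\le\; \nu\,(T')^{1/2}\,\|W\|_{L^2\X^{6,K_0}} \;\le\; (T')^{1/2}\,\bigl(\nu\,E(T')\bigr)^{1/2} \;\lesssim\; \sqrt{\nu T'}\,\eta\delta,
\]
so that $\int_0^{T'}\|g_t\|_{L^\infty}\,dt$ still tends to $0$ as $T'\to0$ uniformly in $\nu\le1$. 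With this replacement your single-condition bootstrap closes exactly as you describe, and the rest of your argument (the $\beta = n+\tfrac75$ identification, the $W^{3,\infty}/W^{1,\infty}$ bounds on $g$ and $\ba$, the multilinear estimate with the special treatment of the $2K_0+7$-derivatives-on-$\rho$ term and the odd-order refinement, and the absorption of the $\|W\|^2$ and $\delta^{-2}\|W\|^4$ terms using $\sigma(T_*)\ll1$) is essentially the paper's proof. A secondary, minor imprecision: you describe $\ba$ as ``a fixed multiple of $g_{yy}$, plus the coefficient of $W_y$ in the \emph{linear part} of $N$,'' but the paper's $a^{(n)}$ is the full (nonlinear in $W,W_y,W_{yy}$) coefficient of $\partial_y^{n+1}W$; that is what actually makes the residual fit the class $\max\alpha_m\le n$ of Lemma~\ref{lem:Multilinear}, as you correctly assert one line later.
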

\begin{proof}
We make the bootstrap assumption that for some \(\cK>0\) we have
\begin{equation}\label{BS}
\|W\|_{L^\infty([0,T];\X^{4,K_0})}^2 + \nu \|W\|_{L^2((0,T);\X^{6,K_0})}^2\leq (\cK \delta)^2.
\end{equation}
We then observe that \(W^{(n)} = \partial_y^nW\) satisfies the equation
\begin{equation}\label{P0}
W_t^{(n)} + (1 + g)W_{yyy}^{(n)} + (\frac 75 + n)g_{y}W_{yy}^{(n)} + a^{(n)}W_y^{(n)} + N^{(n)} + \partial_y^n(\rho^{\frac56}F) = -\nu\partial_y^4W^{(n)},
\end{equation}
where the coefficient
\[
a^{(n)} = a^{(n)}(y,W,W_y,W_{yy}),
\]
is a polynomial in \(\rho^{-\frac56}W,\rho^{-\frac56}W_y,\rho^{-\frac56}W_{yy}\) with bounded coefficients, and the perturbative term
\[
N^{(n)} = \sum\limits_{M = 1}^6L_n[y,\overbrace{W,\dots,W}^M],
\]
whenever \(0\leq n\leq 2K_0 + 3\), with the slight modification when \(n = 2K_0 + 4\),
\[
\begin{aligned}
N^{(2K_0 + 4)} &= \sum\limits_{M = 1}^6L_{2K_0 + 7}[y,\overbrace{W,\dots,W}^M]\\
&\quad + \frac{\partial_y^{2K_0 + 7}\rho}{\rho}\left(\frac12\rho^{\frac 56}\left((1 + \rho^{-\frac56}W)^5 - 1\right) - \frac13(1 + \rho^{-\frac56} W)^5W\right)
\end{aligned}
\]
We then note that \eqref{P0} is in the form of the model equation \eqref{ModelSmooth} with \(\bg = g\), \(\beta = \frac75 + n\), \(\ba = a^{(n)}\), \(\bff = N^{(n)} + \partial_y^n(\rho^{\frac56}F)\).

Applying the Sobolev estimate \eqref{Sobolev} and the pointwise estimate \eqref{PointwiseInit} for \(\rho\) with the bootstrap assumption \eqref{BS} we may bound
\begin{equation}\label{P1}
\|g\|_{W^{3,\infty}}\lesssim (1 + \cK)^4\cK,\qquad \|a^{(n)}\|_{W^{1,\infty}}\lesssim (1 + \cK)^5.
\end{equation}
In particular, the coefficients satisfy the hypothesis \eqref{BasicAPriori} of Proposition~\ref{prop:MainAP} whenever \(0<\cK\ll1\) is sufficiently small.

From the Sobolev estimate \eqref{Sobolev} we may bound,
\[
\left\|g_t\right\|_{L^\infty}\lesssim \left(1 + \cK\right)^4\delta^{-1}\|W_t\|_{\X^{1,K_0}}.
\]
We then use the equation \eqref{FinalFormReg} to write,
\[
W_t + G + \rho^{\frac56}F = - \nu W_{yyyy},
\]
where, for \(0\leq n\leq 2K_0 + 1\),
\[
\partial_y^nG = \sum\limits_{M = 1}^6L_{n +3}[y,\overbrace{W,\dots,W}^M].
\]
Applying the multilinear estimate \eqref{MultilinearBound} with the pointwise estimate \eqref{PointwiseInit} for \(\rho\) and \(\X^{4,K_0}\)-estimate \eqref{L2Init} for \(F\) we then obtain
\[
\|W_t\|_{\X^{1,K_0}}\lesssim \left(1 + \delta^{-1}\|W\|_{\X^{4,K_0}}\right)^5\|W\|_{\X^{4,K_0}} + \cM + \nu\|W\|_{\X^{5,K_0}}.
\]
As a consequence we may use the bootstrap assumption \eqref{BS} to obtain
\begin{equation}\label{P3}
\int_0^T\|g_t\|_{L^\infty}\,dt\lesssim (1 + \cK)^4\left((1 + \cK)^5\cK T + \delta^{-1}\cM T + \cK\sqrt{\nu T}\right).
\end{equation}

Finally, we apply the multilinear estimate \eqref{MultilinearBound} (and the Sobolev estimate \eqref{Sobolev} for the final term when \(n = 2K_0 + 4\)) to obtain,
\begin{equation}\label{P3_1}
\|\<y\>^k N^{(n)} \|_{L^2}\lesssim(1 + \cK)^5 \cK \delta ,
\end{equation}
whenever \(0\leq k\leq K_0\) and \(0\leq n\leq 2(K_0 - k) + 4\).

Choosing \(0<\cK\ll1\) and \(0<\nu \ll1\) sufficiently small we may then apply Proposition~\ref{prop:MainAP} (noting that it applies to mild solutions via a standard approximation argument) to obtain the estimate
\[
\|W\|_{L^\infty([0,T];\X^{4,K_0})}^2 + \nu \|W\|_{L^2((0,T);\X^{6,K_0})}^2 \lesssim \sigma(T)\mc M^2,
\]
where
\begin{equation}
\label{mudef}
\sigma(T) = e^{C(1 + \delta^{-1}\cM)T + C\sqrt{\nu T}} - 1,
\end{equation}
and the constant is independent of \(\nu\). Note that we use repeatedly~\eqref{ModelAPrioriWeighted}, along with the elementary inequality $(e^{C_1}-1)(e^{C_2}-1) \leq e^{C_1+C_2}-1$ for $C_1, C_2 > 0$ to absorb these terms into the constant $C=C(K_0)$.

We may thus find a $T_*$ independent of $\nu$ such that for all $0 < T \leq T_*$ we have
\[
\|W\|_{L^\infty([0,T];\X^{4,K_0})}^2 + \nu \|W\|_{L^2((0,T);\X^{6,K_0})}^2 \leq \frac12 (\cK \delta)^2,
\]
allowing us to close the bootstrapping argument for existence.
\end{proof}

\subsection{Uniqueness of solutions}
We now consider the linearization of \eqref{FinalForm}, taking \(\bw  \) and \(\brho\), \(\mbf F \) to be the first variations of \(W,\rho, F\) respectively to obtain the equation
\begin{equation}\label{Linearization}
\begin{cases}
\bw_t + (1 + g)\bw_{yyy} + \frac75g_y\bw_{yy} + \ba \bw_y + \mbf b \bw + \bff + \rho^{\frac56}\mbf F = 0,\vspace{0.1cm}\\
\bw(0) = 0,
\end{cases}
\end{equation}
where we define \(g\) as in \eqref{Metric}, the coefficients \(\ba = \ba(\rho,W), \mbf b = \mbf b(\rho,W)\) may be bounded using the Sobolev estimate \eqref{Sobolev} and the estimates \eqref{SlowDecayInit}, \eqref{InitBound} for \(\rho\) so that for each \(0\leq k\leq K_0\) we have
\begin{equation}\label{Linearized1}
\sum\limits_{n = 0}^{2(K_0 - k) + 2}\|\<y\>^k\partial_y^n\ba\|_{L^\infty} + \sum\limits_{n = 0}^{2(K_0 - k) + 1}\|\<y\>^k\partial_y^n\mbf b\|_{L^\infty}\lesssim (1 + \delta^{-1}\|W\|_{\X^{4,K_0}})^5,
\end{equation}
and the inhomogeneous term \(\bff = \bff(\rho,W,F,\brho)\) may be bounded similarly to obtain
\begin{equation}\label{Linearized2}
\|\bff\|_{\X^{1,K_0}}\lesssim (1 + \delta^{-1}\|W\|_{\X^{4,K_0}})^6\left(\|\brho\|_{L^\infty} + \sum\limits_{n = 1}^{2K_0 + 4}\|\partial_y^n(\ln \pmb\rho)\|_{L^2}\right).
\end{equation}

We then have the following estimate for the linearized equation:
\begin{prop}\label{prop:LinearizedAP}
If \(W\in C([0,T];\X^{4,K_0})\cap C^1([0,T];\X^{1,K_0})\) and \(\bw \in C([0,T];\X^{4,K_0})\) is a solution of the equation \eqref{Linearization} then we have the estimate
\begin{equation}\label{LinearizedAPriori}
\|\bw\|_{L^\infty([0,T];\X^{1,K_0})}\lesssim \sigma(T) \left(\|\brho\|_{L^\infty} + \sum\limits_{n = 1}^{2K_0 + 4}\|\partial_y^n(\ln \pmb\rho)\|_{L^2} + \|\rho^{\frac56}\mbf F\|_{\X^{1,K_0}}\right),
\end{equation}
where \(\sigma(T) = O(T)\) as \(T\rightarrow 0\) and the constants depend on \(\|W\|_{L^\infty([0,T];\X^{4,K_0})},\|W_t\|_{L^\infty([0,T];\X^{1,K_0})}\).
\end{prop}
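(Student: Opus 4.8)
The plan is to reduce the linearized equation \eqref{Linearization} to the model equation \eqref{ModelSmooth} (with \(\nu=0\)) satisfied by each derivative \(\partial_y^n\bw\), to apply the a priori estimates of Proposition~\ref{prop:MainAP}, and to close a Gronwall inequality. I would fix \(0\le k\le K_0\) and \(0\le n\le 2(K_0-k)+1\), so that \(\<y\>^k\partial_y^n\bw\) is one of the terms defining \(\|\bw\|_{\X^{1,K_0}}\), and differentiate \eqref{Linearization} \(n\) times. Expanding by Leibniz, I keep on the left the top-order term \((1+g)\partial_y^{n+3}\bw\), the second-order term \((\frac75+n)g_y\partial_y^{n+2}\bw\) (obtained by combining the single commutator derivative of \((1+g)\bw_{yyy}\) with the leading term of \(\partial_y^n(\frac75 g_y\bw_{yy})\)), and a first-order term \(\ba^{(n)}\partial_y^{n+1}\bw\), with \(\ba^{(n)}\) built from \(\ba\), \(g_y\) and \(\partial_y^2 g\); everything else — the remaining commutator terms, which carry at most \(n\) derivatives on \(\bw\), together with \(\partial_y^n(\mbf b\bw)\), \(\partial_y^n\bff\) and \(\partial_y^n(\rho^{\frac56}\mbf F)\) — I collect into a forcing term \(\bff^{(n)}\). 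Thus \(\partial_y^n\bw\) solves \eqref{ModelSmooth} with \(\bg=g\), \(\beta=\frac75+n\), \(\ba=\ba^{(n)}\), \(\nu=0\), \(\bff=\bff^{(n)}\). Using the Sobolev bound \eqref{Sobolev}, the pointwise bound \eqref{PointwiseInit}, the bound \eqref{Linearized1} for \(\ba\), and the smallness of \(\rho^{-\frac56}W\) (which I may assume, shrinking \(T\) if necessary, since \(W(0)=0\) in the application), I verify the hypothesis \eqref{BasicAPriori} for \(g\) and \(\ba^{(n)}\); and since \(g_t=5(1+\rho^{-\frac56}W)^4\rho^{-\frac56}W_t\), the estimate \eqref{Sobolev} gives \(\|g_t\|_{L^\infty}\lesssim(1+\delta^{-1}\|W\|_{\X^{4,K_0}})^4\delta^{-1}\|W_t\|_{\X^{1,K_0}}<\infty\) by hypothesis.

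Next I would estimate \(\|\<y\>^k\bff^{(n)}\|_{L^2}\). After expanding \(g\) as a polynomial in \(\rho^{-\frac56}W\), the residual commutator terms and \(\partial_y^n(\mbf b\bw)\) are sums of multilinear expressions of the form \(L_n[y,\overbrace{W,\dots,W}^{M-1},\bw]\) (the \(\rho^{-\frac56}\) factors being absorbed into the \(\delta^{-(M-1)}\<y\>^{K_0(M-1)}\) of the \(L_n\)-notation via \eqref{SlowDecayInit}, and the coefficients bounded via \eqref{PointwiseInit} and \eqref{Linearized1}). Since at most \(n\le 2(K_0-k)+1\) derivatives fall on \(\bw\), I would argue as in the proof of Lemma~\ref{lem:Multilinear}, but now estimating the \(\bw\)-factor in \(L^2\) — so that it is controlled by \(\|\bw\|_{\X^{1,K_0}}\) rather than \(\|\bw\|_{\X^{4,K_0}}\) — and the \(W\)-factors in \(L^\infty\); the weight arithmetic closes precisely because \(n\le 2(K_0-k)+1\). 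This gives a bound \(\lesssim(1+\delta^{-1}\|W\|_{\X^{4,K_0}})^6\|\bw\|_{\X^{1,K_0}}\) for this part of \(\bff^{(n)}\). The remaining contributions are controlled by the hypotheses: \(\|\partial_y^n\bff\|_{\<y\>^k L^2}\lesssim\|\bff\|_{\X^{1,K_0}}\) is bounded by \eqref{Linearized2}, and \(\|\partial_y^n(\rho^{\frac56}\mbf F)\|_{\<y\>^k L^2}\lesssim\|\rho^{\frac56}\mbf F\|_{\X^{1,K_0}}\).

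Finally I would assemble the estimates. Applying \eqref{ModelAPriori} when \(k=0\) and \eqref{ModelAPrioriWeighted} when \(k\ge1\) to \(\partial_y^n\bw\) — noting that the lower-order terms \(\sum_{j=0}^2\|\<y\>^{k-1}\partial_y^{n+j}\bw\|_{L^2}\) appearing on the right of \eqref{ModelAPrioriWeighted} are themselves \(\lesssim\|\bw\|_{\X^{1,K_0}}\), since \(n+j\le 2(K_0-(k-1))+1\) — and summing over all admissible \((k,n)\), I would obtain (working with \(\|\bw\|_{\X^{1,K_0}}\) itself, rather than its square, to keep the dependence on the forcing linear) the differential inequality
\[
\frac{d}{dt}\|\bw\|_{\X^{1,K_0}}\lesssim\big(1+\|g_t\|_{L^\infty}\big)\|\bw\|_{\X^{1,K_0}}+\|\brho\|_{L^\infty}+\sum_{m=1}^{2K_0+4}\|\partial_y^m(\ln\brho)\|_{L^2}+\|\rho^{\frac56}\mbf F\|_{\X^{1,K_0}},
\]
with constants depending on \(\|W\|_{L^\infty([0,T];\X^{4,K_0})}\) and \(\|W_t\|_{L^\infty([0,T];\X^{1,K_0})}\). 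Since \(\bw(0)=0\), Gronwall's inequality then yields \eqref{LinearizedAPriori} with \(\sigma(T)=\exp\big(C\int_0^T(1+\|g_t\|_{L^\infty})\,dt\big)-1=O(T)\). I expect the main obstacle to be the forcing estimate of the previous paragraph: one must check carefully that, once the three top derivative orders have been peeled into the coefficients of \eqref{ModelSmooth}, the residual nonlinear terms require only the \(\X^{1,K_0}\)-norm of \(\bw\) — which is not \emph{a priori} obvious since \(g\) is only built from \(W\in\X^{4,K_0}\) and need not decay — and this rests on the precise structure of \eqref{FinalForm} and on the matching of the weight-versus-derivative budget built into the spaces \(\X^{N,K}\) and into Lemma~\ref{lem:Multilinear}.
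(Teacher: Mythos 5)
Your proposal is correct and follows essentially the same route as the paper, which simply says "proceeding as in the proof of Proposition~\ref{prop:UniBounds}... details left to the reader": differentiate $n$ times, cast $\partial_y^n\bw$ in the form of the model equation \eqref{ModelSmooth} with $\nu=0$ and $\beta=\frac75+n$, control coefficients and forcing via \eqref{Linearized1}, \eqref{Linearized2}, \eqref{SlowDecayInit}, \eqref{PointwiseInit} and the multilinear machinery of Lemma~\ref{lem:Multilinear}, then close with Gronwall. You correctly flag the one refinement that the bare statement of Proposition~\ref{prop:MainAP} does not directly give (keeping the forcing dependence linear in $\|\bw\|$, i.e.\ splitting $\langle\bff,\bw\rangle\lesssim\|\bff\|\|\bw\|$ rather than $\|\bff\|^2+\|\bw\|^2$, so that zero initial data yields $\sigma(T)=O(T)$ rather than $O(\sqrt{T})$), and you correctly identify the weight-versus-derivative bookkeeping as the point requiring care, so the reconstruction is faithful in both substance and emphasis.
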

\begin{proof}
Proceeding as in the proof of Proposition~\ref{prop:UniBounds} we apply the a priori estimate for the model equation \eqref{ModelSmooth} with \(\nu = 0\) with the estimates \eqref{Linearized1}, \eqref{Linearized2} for the coefficients and inhomogeneous term. The details are left to the reader.
\end{proof}

\begin{cor}\label{cor:Uniqueness}
Solutions to \eqref{FinalForm} are unique in the space \(C([0,T];\X^{4,K_0})\cap C^1([0,T];\X^{1,K_0})\).
\end{cor}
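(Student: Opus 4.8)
The plan is to show that any two solutions $W^{(1)}, W^{(2)} \in C([0,T];\X^{4,K_0})\cap C^1([0,T];\X^{1,K_0})$ of \eqref{FinalForm} with the same initial data must coincide, by estimating their difference $\bw = W^{(1)} - W^{(2)}$ in the weaker norm $\X^{1,K_0}$. First I would write down the equation satisfied by $\bw$: subtracting the two copies of \eqref{FinalForm} and using that both solutions share the same $\rho$ (hence the same $y$-coordinate, the same inhomogeneous term $\rho^{5/6}F$, and $\brho = 0$, $\mbf F = 0$), one obtains an equation of exactly the form \eqref{Linearization} but with the $\brho$- and $\mbf F$-contributions absent. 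The nonlinear terms $g$, $N$ in \eqref{FinalForm} are polynomials in $\rho^{-5/6}W$ and its $y$-derivatives with bounded coefficients, so their differences can be written, via a telescoping/interpolation identity, as $\bw$ or its derivatives multiplied by coefficients that are polynomials in $\rho^{-5/6}W^{(1)}$, $\rho^{-5/6}W^{(2)}$ and their derivatives up to second order. These coefficients are controlled in the relevant pointwise and weighted norms using the Sobolev estimate \eqref{Sobolev}, the lower bound \eqref{SlowDecayInit}, and the bounds \eqref{InitBound}, \eqref{PointwiseInit} for $\rho$, exactly as in \eqref{Linearized1}; the top-order coefficient is $(1+g^{(1)})$ which satisfies the smallness hypothesis $\|g^{(1)}\|_{L^\infty}\le \tfrac12$ after possibly shrinking $T$.

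Next I would invoke Proposition~\ref{prop:LinearizedAP}: the difference $\bw$ solves an equation of the type \eqref{Linearization} with $\brho = 0$ and $\mbf F = 0$, and $\bw(0) = 0$, so the a priori estimate \eqref{LinearizedAPriori} gives
\[
\|\bw\|_{L^\infty([0,T];\X^{1,K_0})} \lesssim \sigma(T)\cdot 0 = 0,
\]
since the entire right-hand side of \eqref{LinearizedAPriori} vanishes. Here $\sigma(T) = O(T)$ and the implicit constants depend only on $\|W^{(j)}\|_{L^\infty([0,T];\X^{4,K_0})}$ and $\|W^{(j)}_t\|_{L^\infty([0,T];\X^{1,K_0})}$, both of which are finite by hypothesis. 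One mild technical point is that Proposition~\ref{prop:LinearizedAP} is stated for a solution $\bw$ of \eqref{Linearization} with given $W$; to apply it to the difference of two solutions one checks that the difference equation genuinely has this structure, with the coefficients $\ba$, $\mbf b$ now depending on both $W^{(1)}$ and $W^{(2)}$, and that the bounds \eqref{Linearized1} survive with $\|W\|_{\X^{4,K_0}}$ replaced by $\max_j \|W^{(j)}\|_{\X^{4,K_0}}$—this is routine since everything is polynomial.

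Finally, a standard continuation argument removes the restriction to small $T$: the set of times $t\in[0,T]$ on which $W^{(1)}(t) = W^{(2)}(t)$ is closed (by continuity in $\X^{1,K_0}$) and nonempty (it contains $0$), and the local argument above, applied with initial time $t_0$ in place of $0$, shows it is open, so it is all of $[0,T]$. The main obstacle is not conceptual but bookkeeping: one must verify that the difference of the nonlinear terms in \eqref{FinalForm} really does fit into the framework of \eqref{Linearization} with coefficients obeying \eqref{Linearized1}, i.e. that no derivative of order higher than two on $W^{(1)}$ or $W^{(2)}$ appears in the coefficients and that the weights $\langle y\rangle^k$ are distributed correctly—this is exactly the computation that produced \eqref{Linearized1} and \eqref{Linearized2} in the first place, now carried out for a difference rather than a first variation, so it can be cited rather than repeated.
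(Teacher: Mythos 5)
Your proof is correct and takes essentially the same approach as the paper: write the difference $\bw = W^{(1)} - W^{(2)}$ as a solution of an equation of the form \eqref{Linearization} with $\brho = 0$ and vanishing inhomogeneity, and invoke Proposition~\ref{prop:LinearizedAP} to conclude $\bw = 0$. The only stylistic difference is that the paper organizes the factored nonlinear differences via the convex-combination trick $W^{(\tau)} = \tau W^{(1)} + (1-\tau)W^{(2)}$, integrating the linearization over $\tau\in[0,1]$, whereas you use direct telescoping; these are equivalent, and the continuation argument you append at the end is not needed since \eqref{LinearizedAPriori} already yields $\bw\equiv 0$ on $[0,T]$.
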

\begin{proof}
Given any two solutions \(W^{(1)},W^{(2)}\) of \eqref{FinalForm} we define
\[
W^{(\tau)} = \tau W^{(1)} + (1 - \tau)W^{(2)}.
\]
We then see that the difference \(\bw = W^{(1)} - W^{(2)}\) satisfies the linearized equation \eqref{Linearization} about \(W^{(\tau)}\) with \(\brho = 0\) and \(\bff = 0\) integrated from \(\tau = 0\) to \(\tau  = 1\). Observing that the proof of the estimate \eqref{LinearizedAPriori} may be applied with \(\bg\) replaced by \(\int_0^1\bg\,d\tau\), etc., we may proceed as in Proposition~\ref{prop:LinearizedAP} to show that \(\bw = 0\).
\end{proof}

\subsection{Proof of Theorem~\ref{thm:ProxyLWP}}
We now complete the proof of Theorem~\ref{thm:ProxyLWP}. The argument is an essentially standard application of the energy method, so we only sketch the details:

\medskip
\begin{enumerate}
\item \textbf{Existence of solutions to the regularized equation \eqref{FinalFormReg}.} We first apply Lemma~\ref{lem:MildExistence} for each \(0<\nu\ll1\) sufficiently small to obtain a solution \(W^{(\nu)}\in \cZ^{4,K_0}\) of the regularized equation \eqref{FinalFormReg} with \(W_0 = 0\).
\item \textbf{Uniform bounds.} Next we apply Proposition~\ref{prop:UniBounds} with Lemma~\ref{lem:MildExistence} and a standard bootstrap argument to find a time \(T>0\) independent of \(\nu\) so that the set \(\{W^{(\nu)}\}\) is uniformly bounded in \(C([0,T];\X^{4,K_0})\). Further, as \(W^{(\nu)}\) is a mild solution of \eqref{FinalFormReg} we see that \(\{W_t^{(\nu)}\}\) is uniformly bounded in \(L^2([0,T];\X^{1,K_0})\).
\item \textbf{Existence of a solution to the equation \eqref{FinalForm}.} By weak compactness there exists a weak limit point \(W\in L^\infty([0,T];\X^{4,K_0})\cap H^1((0,T);\X^{1,K_0})\) satisfying the estimate
\begin{equation}\label{LimitPoint}
\|W\|_{L^\infty([0,T];\X^{4,K_0})}\ll \delta.
\end{equation}
Further, from the compactness of the Sobolev embedding \eqref{Sobolev} (see Remarks~\ref{rem:Compact1},~\ref{rem:Compact2}), by passing to a subsequence \(\nu_j\rightarrow0\) we may take a limit in \eqref{FinalFormReg} to show that \(W\) is a distributional solution of \eqref{FinalForm}. In particular, \(W\in L^\infty([0,T];\X^{4,K_0})\cap W^{1,\infty}((0,T);\X^{1,K_0})\) satisfies the equation \eqref{FinalForm} almost everywhere.
\item\textbf{Continuity in time.} It remains to show that the solution \(W\in C([0,T];\X^{4,K_0})\). To do this we define the mollified data,
\[
\ln\rho_{\leq j} = P_{\leq j}(\ln \rho),\qquad (\rho^{\frac56}F)_{\leq j} = P_{\leq j}(\rho^{\frac56}F).
\]
From the estimate \eqref{InitBound} for \(\rho\) we see that
\[
|\ln \rho - \ln\rho_{\leq j}|\lesssim 2^{-j}\|\partial_y\ln \rho\|_{L^\infty}\lesssim 2^{-j},
\]
and hence by shrinking \(\delta\) slightly we may ensure that \(\rho_{\leq j}\) satisfies the lower bound \eqref{SlowDecayInit} whenever \(j\gg1\). We note that \(\rho_{\leq j}\) satisfies the \(L^2\)-estimate \eqref{InitBound} and \((\rho^{\frac56}F)_{\leq j}\) satisfies the estimate \eqref{L2Init} uniformly in \(j\gg1\). Further, we have the estimates,
\begin{equation}\label{MollyHigh}
\|\ln \rho_{\leq j}\|_{\dot H^{2K_0 + 4 + n}}\lesssim 2^{nj},\qquad \|(\rho^{\frac56} F)_{\leq j}\|_{\X^{4 + n,K_0}} \lesssim 2^{nj}\cM,
\end{equation}
whenever \(n\geq 0\). Finally, we note that as \(j\rightarrow\infty\) we have,
\begin{equation}\label{MollyDiff}
\|\ln \rho_{\leq j} - \ln \rho\|_{\dot H^1\cap \dot H^{2K_0 + 1}} =o( 2^{-3j} ),\qquad \|(\rho^{\frac56}F)_{\leq j} - (\rho^{\frac56}F)\|_{\X^{1,K_0}} = o(2^{-3j}).
\end{equation}

Repeating the proof of the existence of \(W\), after shrinking the time \(T\) slightly we may find a solution \(W_{\leq j}\in L^\infty([0,T];\X^{4,K_0})\cap W^{1,\infty}((0,T);\X^{1,K_0})\) to the equation \eqref{FinalForm} with \(\rho\) replaced by \(\rho_{\leq j}\) and \(\rho^{\frac56}F\) by \((\rho^{\frac 56}F)_{\leq j}\). However, differentiating the equation we obtain an equation that is still of the form of the model equation and hence we may apply essentially identical estimates to Lemma~\ref{lem:MildExistence} and Proposition~\ref{prop:UniBounds} to show that
\[
W_{\leq j}\in L^\infty([0,T];\X^{7,K_0})\cap W^{1,\infty}((0,T);\X^{4,K_0}),
\]
satisfies the estimate
\begin{equation}\label{HB}
\|W_{\leq j}\|_{L^\infty([0,T];\X^{7,K_0})}\lesssim 2^{3j}\delta.
\end{equation}
By redefining \(W_{\leq j}\) on a set of measure zero we may also assume that \(W_{\leq j}\in C([0,T];\X^{4,K_0})\).

Next we consider the equation for the difference \(W - W_{\leq j}\). Estimating the difference \(W - W_{\leq j}\) using the a priori estimate for the linearized equation \eqref{LinearizedAPriori} as in the proof of Corollary~\ref{cor:Uniqueness}, and applying the estimate \eqref{MollyDiff} we then obtain
\begin{equation}\label{LB}
\|W - W_{\leq j}\|_{L^\infty([0,T];\X^{1,K_0})} = o( 2^{-3j}\delta ),\qquad j\rightarrow\infty.
\end{equation}
Applying the interpolation estimate \eqref{Interpolate} with the estimate \eqref{HB} we may then show that the sequence \(W_{\leq j}\) is Cauchy in \(C([0,T];\X^{4,K_0})\). Further, from \eqref{LB} the limit is given by \(W\) and hence \(W\in C([0,T];\X^{4,K_0})\).
\item \textbf{Uniqueness.} This follows from Corollary~\ref{cor:Uniqueness}.
\end{enumerate}
\medskip
This completes the proof of Theorem~\ref{thm:ProxyLWP}.\qed

\section{Existence and uniqueness of solutions to \eqref{QdV}}
In this section we prove a rigorous version of Theorem~\ref{thm:MainRough}, giving the existence and uniqueness of hydrodynamic solutions to \eqref{QdV} for the set of initial data considered in Theorem~\ref{thm:ProxyLWP}.

\begin{thm}\label{thm:MainFull}
Suppose that in the \(x\)-coordinates \(\rho = u_0^2 \in C^3(\R)\) satisfies the subcritical left and right endpoint decay conditions \eqref{LEP} and \eqref{REP}. Suppose also that in the \(y\)-coordinates \(\rho\) satisfies the estimates \eqref{SlowDecayInit}, \eqref{InitBound} and \eqref{L2Init}. Then there exists a time \(T>0\) and a unique hydrodynamic solution to the equation \eqref{QdV}.
\end{thm}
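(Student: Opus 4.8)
The plan is to reverse, one by one, the three reductions of Section~\ref{sect:Main} --- from $W$ to $Z$, from $Z$ to the Lagrangian map $X$, and from $X$ to $u$ --- starting from the solution produced by Theorem~\ref{thm:ProxyLWP}, and then to quote Theorem~\ref{thm:Unique} for uniqueness. Concretely: Theorem~\ref{thm:ProxyLWP} yields $T>0$ and $W\in C([0,T];\X^{4,K_0})$ solving \eqref{FinalForm} with $\|W\|_{L^\infty([0,T];\X^{4,K_0})}\ll\delta$; set $Z=\rho^{-5/6}W$. Since $\rho^{-5/6}\lesssim\langle y\rangle^{K_0}$ by \eqref{SlowDecayInit} and $\sum_{n=0}^{3}\|\langle y\rangle^{K_0}\partial_y^n W\|_{L^\infty}\lesssim\|W\|_{\X^{4,K_0}}$ by \eqref{Sobolev} (taking $k=K_0$), the function $Z$ and its first three $y$-derivatives are bounded and $\|Z\|_{L^\infty}<1$, so $1+Z\sim1$; composing with the diffeomorphism $y\colon I\to\R$ of \eqref{COV} turns $Z$ into a solution of \eqref{Zeqn} on $I$. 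Finally I reconstruct $u$: the physical solution in the original Lagrangian coordinate is $\tilde u(t,x)=(1+Z(t,x))u_0(x)$, the Lagrangian map is recovered from $X_x=(1+Z)^{-1}$, $X(0,\cdot)=\mathrm{id}$, equivalently $X(t,x)=x+\int_0^t\tilde b(s,x)\,ds$ with $\tilde b=\frac12(1+Z)\partial_x\left((1+Z)\partial_x[(1+Z)^2\rho]\right)+\mu(1+Z)^2\rho$ the expression for $b$ in Lagrangian coordinates, and $u(t,\cdot)=\tilde u(t,\cdot)\circ X(t,\cdot)^{-1}$, extended by zero off the support. Since $1+Z\sim1$, $X(t,\cdot)$ is an increasing bi-Lipschitz homeomorphism of $\R$ fixing $x_\pm$ (because $b$ vanishes at $x_\pm$, where $u^2$ vanishes to order $>3$), so $u$ is well defined with fixed support.

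Second, I would check that $u$ is a hydrodynamic solution lying in the regularity class to which Theorem~\ref{thm:Unique} applies. Nonnegativity is clear from $u=\sqrt\rho\,(1+Z)\geq0$. For the regularity, one converts the weighted $\X^{4,K_0}$ bounds on $W$ into pointwise bounds in $x$: writing $u^2=\rho+2\rho^{1/6}W+\rho^{-2/3}W^2$ and $u^{4/3}=\rho^{2/3}(1+\rho^{-5/6}W)^{4/3}$, expanding with $\partial_x=\rho^{-1/3}\partial_y$, and using \eqref{InitBound}, \eqref{PointwiseInit} together with the decisive inequality $\rho^{-5/6}\lesssim\langle y\rangle^{K_0}$, every term in $\partial_x^n(u^2)$ for $n\leq3$ and in $\partial_x^n(u^{4/3})$ for $n\leq2$ is bounded by a product of $\langle y\rangle^{K_0}$-weighted $L^\infty$ norms of $W$ and its first three derivatives, hence by $\|W\|_{\X^{4,K_0}}$ via \eqref{Sobolev}; this is precisely the algebra for which the spaces $\X^{N,K}$ of \eqref{FunctionSpace} were set up. Continuity in $t$ is inherited from $W\in C([0,T];\X^{4,K_0})$. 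This gives $u_0\in C^1_b$, $u_0^2\in C^3_b$, $u_0^{4/3}\in C^2_b$ and $u^2\in C([0,T];C^3_b)$, $u^{4/3}\in C([0,T];C^2_b)$; in particular $b=\frac12(u^2)_{xx}+\mu u^2\in C([0,T];C^1_b)$, which retroactively legitimizes the Lagrangian map, and one then verifies by the chain rule --- reversing the computations of Section~\ref{sect:Main}, which are reversible since $1+Z\sim1$ --- that $u$ satisfies \eqref{transport} pointwise and belongs to $C^1([0,T]\times\R)$, the last point using $u_t=-(bu)_x\in C([0,T];C_b)$.

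Third, uniqueness: I would apply Theorem~\ref{thm:Unique} directly. Its hypotheses on $u_0$ hold, and the $u$ just constructed lies in the class $u^2\in C([0,T];C^3_b)$, $u^{4/3}\in C([0,T];C^2_b)$ covered by that theorem, so it is the unique hydrodynamic solution of \eqref{QdV} in this class. As already noted after Theorem~\ref{thm:ProxyLWP}, the uniqueness of $W$ from Corollary~\ref{cor:Uniqueness} is not enough on its own, since an a priori rougher hydrodynamic solution of \eqref{QdV} with the same data need not correspond to a $W$ in $C([0,T];\X^{4,K_0})$; Theorem~\ref{thm:Unique} sidesteps this by working directly with $u$.

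The step I expect to be the main obstacle is the bookkeeping in the reconstruction: one must track how the singular change of variable $x\mapsto y$ in \eqref{COV} and the weight $\rho^{5/6}$ in $W=\rho^{5/6}Z$ interact, and confirm that the polynomial lower bound \eqref{SlowDecayInit} together with the weighted $L^2$ bounds \eqref{InitBound} on $\rho$ is \emph{exactly} enough to recover $u^2\in C^3_b$ and $u^{4/3}\in C^2_b$ uniformly up to and including the degenerate endpoints $x_\pm$, and not merely on compact subsets of $I$. Everything else is routine, as long as one keeps the distinction between classical and hydrodynamic solutions firmly in view: the $u$ constructed here is in general only a hydrodynamic solution, since although $u^2$ is $C^3$ its square root $u$ need not be $C^3$ at the endpoints of the support.
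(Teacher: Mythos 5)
Your proposal is correct and follows essentially the same route as the paper: invert the $W \to Z \to X \to u$ chain of changes of variables starting from the $W$ supplied by Theorem~\ref{thm:ProxyLWP}, verify that the resulting $u$ has $u^2\in C([0,T];C^3_b)$ and $u^{4/3}\in C([0,T];C^2_b)$, and then quote Theorem~\ref{thm:Unique} for uniqueness. The only cosmetic divergence is the parameterization of the Lagrangian map --- you set $X(t,x)=x+\int_0^t \tilde b(s,x)\,ds$ directly, whereas the paper writes $X(t,x)=\xi(t)+\int_0^x \frac{ds}{1+Z(t,s)}$ after first solving the ODE $\dot\xi=B(t,0)$, $\xi(0)=0$ to fix the time-dependent constant of integration; the two definitions agree because the equation \eqref{Zeqn} gives $\tilde b_x=\partial_t\bigl[(1+Z)^{-1}\bigr]$, so both yield $X_x=(1+Z)^{-1}$ and $X_t=\tilde b$.
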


To prove Theorem~\ref{thm:MainFull} we first reverse the derivation of the equation \eqref{FinalForm} and apply Theorem~\ref{thm:ProxyLWP} to construct a solution. We then prove Theorem~\ref{thm:Unique} to show that this is the unique hydrodynamic solution of the problem.

\subsection{Existence}
Given initial data as in Theorem~\ref{thm:MainFull}, we may apply Theorem~\ref{thm:ProxyLWP} to obtain a solution \(W\in C([0,T];\X^{4,K_0})\cap C^1([0,T];\X^{1,K_0})\) satisfying the equation \eqref{FinalForm}. Taking \(Z = \rho^{-\frac56}W\) we may use the lower bound \eqref{SlowDecayInit} to show that \(Z\in C([0,T];H^4)\cap C^1([0,T];H^1)\) is a classical solution of the equation \eqref{Flattened}.

Next we invert the change of coodinates \eqref{COV} and extend \(Z\) to \(\R\) by zero to obtain a solution of the equation \eqref{Zeqn} on \(\R\), where we note that, by applying Sobolev embedding in the \(y\)-coordinates, in the \(x\)-coordinates we have \(\rho^{\frac n3}\partial_x^nZ\in C_b([0,T]\times \R)\) for \(n = 0,1,2,3\).

Na\"ively we wish to define the Lagrangian map \(X\) by taking \(X_x = \dfrac1{1 + Z}\). However, this only defines \(X\) up to a time-dependent constant. To choose the constant we define
\[
U(t,x) = \left(1 + Z(t,x)\right)u_0(x),
\]
and observing that \(\rho^{\frac n3-1}\partial_x^n(U^2)\in C_b([0,T]\times\R))\) we may define
\[
B = \frac12(1 + Z)\left((1+Z)(U^2)_x\right)_x + \mu U^2 \in C([0,T];C^1_b(\R)).
\]
Using this, we find the characteristic passing through \((t,x) = (0,0)\) by finding a solution \(\xi\in C^1([0,T])\) of the ODE
\[
\begin{cases}
\dot\xi(t) = B(t,0),\vspace{.1cm}\\
\xi(0) = 0.
\end{cases}
\]
We may then define
\[
X(t,x) = \xi(t) + \int_0^x\frac1{1 + Z(t,s)}\,ds,
\]
where we note that from the proof of Theorem~\ref{thm:ProxyLWP} we have
\[
\sup\limits_{t\in[0,T]}\|Z\|_{L^\infty}\ll1,
\]
and hence \(X\in C^1([0,T]\times\R)\). By construction, it satisfies
$$
X_t(t,x) = B(t,x) \qquad \mbox{for $(t,x) \in [0,T] \times \mathbb{R}$}.
$$

The map \(x\mapsto X(t,x)\) is a diffeomorphism so we may find an inverse \(Y\in C^1([0,T]\times \R)\) so that \(Y_x(t,x) = (1 + Z(t,Y(t,x)))\) and hence \(\rho(Y(t,x))^{\frac n3}\partial_x^{n+1}Y(t,x)\in C_b([0,T]\times\R)\) for \(n = 1,2,3\). We then define
\[
u(t,x) = Y_x(t,x) u_0(Y(t,x)),
\]
and observe that \(u\in C^1_b([0,T]\times \R)\) and \(u^2\in C([0,T];C^3_b(\R))\). Further, with this definition we see that
\[
B(t,x) = b(t,X(t,x)),
\]
where \(b = \frac12(u^2)_{xx} + \mu u^2\). In particular, \(X\) satisfies the ODE \eqref{VFFlow} (recalling that $Z(0,x) = 0$) and hence \(u\) is a hydrodynamic solution of \eqref{QdV}. Further, using the bounds on \(Y\) it is straightforward to verify that \(u^{\frac43}\in C([0,T];C^2_b(\R))\) and hence satisfies the hypothesis of Theorem~\ref{thm:Unique}.

\subsection{Uniqueness}\label{sect:Uniqueness}~
We now prove Theorem~\ref{thm:Unique}: the uniqueness of hydrodynamic solutions.

We first note that if \(u\) is a hydrodynamic solution of \eqref{QdV} then \(w = u^2\) is a non-negative classical solution of the equation
\begin{equation}\label{QdV2}
\begin{cases}
w_t + 2\left(\frac12w_{xx} + \mu w\right)_x w + \left(\frac12w_{xx} + \mu w\right)w_x= 0,\vspace{0.1cm}\\
w(0) = w_0 := u_0^2.
\end{cases}
\end{equation}

Next we define the Lagrangian map \(X\) as in \eqref{VFFlow} and, treating \(b\) as a fixed function, uniqueness of solutions to linear transport equations ensures that \(u\) may be written in the form,
\[
u(t,x) = Y_x(t,x) u_0(Y(t,x)),
\]
where \(Y\in C^1([0,T]\times \R)\) is the inverse of the map \(x\mapsto X(t,x)\). From the ODE satisfied by \(Y_x\) we obtain the estimate
\[
|\ln Y_x|\lesssim \int_0^t \|u^2\|_{W^{3,\infty}}\,ds,
\]
so as \(u_0\geq0\) we have \(u\geq 0\). In particular, provided classical solutions to \eqref{QdV2} are unique, so are hydrodynamic solutions to \eqref{QdV}.

Taking \(\bw\) the first variation \(w\) in the equation \eqref{QdV2}, we have
\begin{equation}\label{LinearizedQdV2}
\begin{cases}
\bw_t + w\bw_{xxx} + \frac12 w_x\bw_{xx} + (\frac12w_{xx} + 3\mu w)\bw_x + (w_{xxx} + 3\mu w_x)\bw = 0,\vspace{0.1cm}\\
\bw(0) = \bw_0.
\end{cases}
\end{equation}
We then have the following lemma:
\begin{lem}
Suppose that \(w\in C([0,T];C^3_b(\R))\cap C^1([0,T];C_b(\R))\) is a non-negative classical solution of \eqref{QdV2} such that \(w^{\frac23}\in C([0,T];C^2_b(\R))\), and \(\bw\in C([0,T];C^3_b(\R))\cap C^1([0,T];C_b(\R))\) is a classical solution of \eqref{LinearizedQdV2} with \(w_0^{-\frac13}\bw_0\in L^2(\R)\). Then \(w^{-\frac13}\bw\in C([0,T];L^2(\R))\) satisfies the estimate
\begin{equation}\label{QdV2LinearizedBound}
\|w^{-\frac13}\bw\|_{L^\infty([0,T];L^2)}\lesssim e^{CT}\|w_0^{-\frac13}\bw_0\|_{L^2}.
\end{equation}
\end{lem}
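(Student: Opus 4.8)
The plan is to run a weighted energy estimate with weight $w^{-1/3}$, which is the natural one here for the same structural reason that $\rho^{5/6}$ was natural for $Z$ in Section~\ref{sect:Main}: the principal part of \eqref{LinearizedQdV2} is $\bw_t + w\bw_{xxx} + \frac12 w_x\bw_{xx}$, and if one formally flattens the leading coefficient by the change of variables $z = \int_0^x w^{-1/3}\,ds$ the coefficient of $\bw_{zz}$ becomes $-\frac12\frac{w_z}{w}$; exponentiating one third of an antiderivative of it produces the weight $w^{-1/6}$ in the $z$ variable, and taking the Jacobian $dz = w^{-1/3}\,dx$ into account this is exactly $\|w^{-1/3}\bw\|_{L^2_x}$. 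Rather than genuinely changing variables (which would introduce the nonlocal drift term $z_t\bw_z$, $z$ being time dependent), I would carry out the estimate directly in the $x$ variable.

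First I would regularize, e.g.\ replacing the weight $w^{-2/3}$ by $(w+\epsilon)^{-2/3}$ so that all the manipulations are licit for the given classical solutions, and recover the statement by letting $\epsilon\downarrow0$, using $(w_0+\epsilon)^{-2/3}\leq w_0^{-2/3}$ together with monotone convergence. With $E(t) = \|w^{-1/3}\bw(t)\|_{L^2}^2$, differentiating in time produces, besides $2\int w^{-2/3}\bw\,\bw_t$ (which is expanded using \eqref{LinearizedQdV2}), the term $-\frac23\int w^{-5/3}w_t\,\bw^2$, in which $w_t$ is rewritten using the equation \eqref{QdV2}, namely $w_t = -w w_{xxx} - \frac12 w_x w_{xx} - 3\mu w w_x$. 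Integrating by parts repeatedly in the terms carrying $\bw_{xxx}$, $\bw_{xx}$ and $\bw_x$, the choice of exponent $-\frac23$ is precisely what makes the surviving $\int(\cdot)\bw_x^2$ contributions and the mixed $\int(\cdot)\bw\,\bw_{xx}$ contributions cancel, leaving an identity of the form
\[
\frac{d}{dt}E(t) = \int c(t,x)\,\bw(t,x)^2\,dx,
\]
where $c$ is an explicit expression built from $w,w_x,w_{xx},w_{xxx}$ together with negative powers of $w$.

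The main obstacle is to show $\sup_{[0,T]\times\R}\bigl|c\,w^{2/3}\bigr|\lesssim 1$, which is exactly what turns the last identity into $\frac{d}{dt}E\leq CE$. Taken in isolation, several constituents of $c$ (such as $w^{-2}w_x^3$ or $w^{-5/3}w_x w_{xx}$) are \emph{not} bounded by $w^{-2/3}$; the content of the computation is that, after the cancellations and after the substitution coming from \eqref{QdV2}, they recombine into quantities controlled by $\|w\|_{C^3_b}$ and $\|w^{2/3}\|_{C^2_b}$. This is the point at which the hypothesis $w^{2/3}\in C^2_b$, and not merely $w\in C^3_b$, is used: it forces $w^{2/3}$ to vanish at least quadratically wherever $w$ does, hence $w$ to vanish to order at least $3$ (in particular it rules out quadratic vanishing at an isolated zero), and via identities such as $w^{-1/3}w_x = \frac32(w^{2/3})_x$ this is what keeps the relevant degenerate combinations in $L^\infty$. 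Granting the bound on $c$, Gronwall's inequality gives $E(T)\leq e^{CT}E(0)$; undoing the regularization yields $w^{-1/3}\bw\in L^\infty([0,T];L^2)$ together with the stated estimate, and $w^{-1/3}\bw\in C([0,T];L^2)$ then follows from the energy identity (whose right-hand side is now integrable in $t$) and the weak continuity of $t\mapsto w^{-1/3}\bw$ inherited from the equation.
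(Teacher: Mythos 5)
Your proposal follows the same route as the paper's proof: a weighted energy estimate for $\|w^{-1/3}\bw\|_{L^2}$, with a regularization of the weight to justify the manipulations at zeros of $w$, cancellations after integration by parts leaving only zeroth-order terms in $\bw$, a pointwise bound on the surviving coefficient using the hypothesis $w^{2/3}\in C^2_b$, and finally Gronwall. The heuristic you give for why $w^{-1/3}$ is the natural weight (flattening by $z=\int w^{-1/3}$ and applying the Mizohata-weight philosophy) is consistent with what the paper does in Section~\ref{sect:Main} for the $Z\mapsto W$ change of variables, even though the paper does not spell it out again here.

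One step of your argument, however, is not quite right as stated. You justify the pointwise bound on the surviving coefficient ``via identities such as $w^{-1/3}w_x=\tfrac32(w^{2/3})_x$''. That identity is correct, but it only yields $w^{-1/3}w_x\in L^\infty$, which follows already from $w^{2/3}\in C^1_b$. After the cancellations, what actually remains in the energy identity (the paper records it as $-\tfrac23\<w^{-2/3}w_{xxx}\bw,\bw\>-\tfrac49\<w^{-5/3}w_xw_{xx}\bw,\bw\>+\tfrac5{27}\<w^{-8/3}w_x^3\bw,\bw\>-5\mu\<w^{-2/3}w_x\bw,\bw\>$) needs, in order to be dominated by $\int w^{-2/3}\bw^2$, the \emph{stronger} pointwise bounds
\[
\|w^{-2/3}w_x\|_{L^\infty}+\|w^{-1/3}w_{xx}\|_{L^\infty}+\|w_{xxx}\|_{L^\infty}\lesssim 1,
\]
and $w^{-2/3}w_x = 3(w^{1/3})_x$, not $\tfrac32(w^{2/3})_x$. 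The Lipschitz bound on $w^{1/3}=(w^{2/3})^{1/2}$ is precisely where the full $C^2_b$ hypothesis on $w^{2/3}$ is used: by Glaeser's inequality for a nonnegative $C^2$ function $f$ with bounded second derivative, $|f_x|\lesssim\sqrt{f}$, so taking $f=w^{2/3}$ gives $|(w^{2/3})_x|\lesssim w^{1/3}$ and hence $w^{1/3}\in W^{1,\infty}$. This is the one ingredient your sketch is missing; with it, the rest of your plan goes through and coincides with the paper's argument.
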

\begin{proof}
Replacing \(w\) by \(\sqrt{\epsilon^2 + w^2}\) and then taking a limit as \(\epsilon\rightarrow0\) it suffices to assume that \(w>0\). Further, by a standard approximation argument we may assume that \(\bw\) has compact support. Integrating by parts we then obtain
\begin{align*}
\frac d{dt}\|w^{-\frac13}\bw\|_{L^2}^2 &= -\frac23\<w^{-\frac53}w_t\bw,\bw\> - 2\<w^{\frac13}\bw,\bw_{xxx}\> - \<w^{-\frac23}w_x\bw,\bw_{xx}\> - \<w^{-\frac23}w_{xx}\bw,\bw_x\> \\
&\quad - 2\<w^{-\frac23}w_{xxx}\bw,\bw\> - 6\mu\<w^{\frac13}\bw,\bw_x\> - 6\mu\<w^{-\frac23}w_x\bw,\bw\>\\
&= -\frac23\<w^{-\frac23}w_{xxx}\bw,\bw\> - \frac49\<w^{-\frac53}w_xw_{xx}\bw,\bw\> + \frac 5{27}\<w^{-\frac83}w_x^3\bw,\bw\> - 5\mu\<w^{-\frac23}w_x\bw,\bw\>.
\end{align*}
As \(w(t)^{\frac23}\in C^2_b(\R)\) is non-negative, a simple argument of Glaeser~\cite{MR0163995} shows that \(w(t)^{\frac13}\in W^{1,\infty}\) and we may bound
\[
\|w^{-\frac23}w_x\|_{L^\infty} + \|w^{-\frac13}w_{xx}\|_{L^\infty} + \|w_{xxx}\|_{L^\infty}\lesssim \|w^{\frac23}\|_{W^{2,\infty}} + \|w\|_{W^{3,\infty}}.
\]
As a consequence, using the equation \eqref{QdV2} to bound \(\dfrac{w_t}w\) we have the estimate
\[
\frac d{dt}\|w^{-\frac13}\bw\|_{L^2}^2\lesssim \|w^{-\frac13}\bw\|_{L^2}^2,
\]
and the estimate \eqref{QdV2LinearizedBound} then follows from Gronwall's inequality.
\end{proof}

Arguing as in the proof of Corollary~\ref{cor:Uniqueness} we may then use the estimate \eqref{QdV2LinearizedBound} to show that any two hydrodynamic solutions \(u,\tilde u\)  satisfying the hypothesis of Theorem~\ref{thm:Unique} with initial data \(u_0,\tilde u_0\) satisfy the estimate
\[
\|u^{\frac43} - \tilde u^{\frac43}\|_{L^2}\lesssim_{u,\tilde u,T} \|u_0^{\frac43} - \tilde u_0^{\frac 43}\|_{L^2},
\]
and hence solutions are unique.

This completes the the proof of Theorem~\ref{thm:Unique} and hence of Theorem~\ref{thm:MainFull}.\qed

\section{The virial argument}\label{sect:Virial}

For the convenience of the reader, and since it is short and elegant, we recall here the virial argument of Zilburg and Rosenau in \cite{2017arXiv170903322Z} in the focusing case $\mu =1$; we further observe that an analogous approach works in the defocusing case $\mu = -1$ and that this approach applies to hydrodynamic solutions, defined as in Definition~\ref{def:hydro}.

\subsection{Hydrodynamic solutions}

We will be dealing with solutions $u \in C^1 ([0,T] \times \mathbb{R})$, \(u^2\in C([0,T];C^3(\R))\) satisfying the hydrodynamic formulation of \eqref{QdV},
$$
u_t + (bu)_x = 0, \qquad \mbox{where} \qquad b = \frac12(u^2)_{xx} + (u^2).
$$
It is clear that these solutions conserve the Hamiltonian $H$, mass $M$, and momentum $J$ and that these solutions propagate non-negativity or non-positivity: $u \geq 0$ or $u \leq 0$.

\subsection{The focusing case $\mu = 1$}

\begin{lem}
\label{plover1}
Assume that $u_0 \neq 0$ and $H(u_0) \geq 0$. Then there does not exist a globally defined hydrodynamic solution $u \in C^1 ([0,\infty) \times \mathbb{R})$, \(u^2\in C([0,\infty);C^3(\R))\), and a real number $M > 0$ such that
\begin{equation}
\label{annashummingbird}
\forall t \in \mathbb{R}, \qquad \supp u(t,\cdot) \subset [-M,M].
\end{equation}
\end{lem}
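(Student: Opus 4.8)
The plan is to run a virial (Pohozaev-type) argument using the second moment of the conserved mass. Define $I(t) = \int x^2 u^2\,dx$, which is finite for all $t$ because of the support hypothesis \eqref{annashummingbird}, and compute $\ddot I(t)$ using the equation $u_t + (bu)_x = 0$. Since $\frac{d}{dt}\int x^2 u^2\,dx = \int x^2 (u^2)_t\,dx = -\int x^2 (2 b u \cdot u)_x \cdot (\text{stuff})$ — more precisely one differentiates $u^2$ using $(u^2)_t = 2u u_t = -2u(bu)_x$ and integrates by parts, the compact support killing all boundary terms — one obtains a first derivative $\dot I(t)$ that is again a spatial integral of a polynomial expression in $u$ and its derivatives weighted by powers of $x$. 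Differentiating once more and substituting the equation again, I expect the $x$-weights to be eliminated entirely after integration by parts, leaving $\ddot I(t)$ expressible in terms of the conserved quantities: schematically $\ddot I(t) = c_1 H(u_0) + c_2 M(u_0)$ or a similar combination, with explicit constants coming from the homogeneity of the terms $u(uu_x)_x$ and $\mu u^3$ under the scaling \eqref{Scaling}. This is exactly the structure that makes the argument work: $L^2$-subcriticality means the virial identity closes with a definite sign.

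The key computation is to verify that, in the focusing case $\mu = 1$, the resulting identity reads $\ddot I(t) = -\alpha H(u_0) + (\text{lower order / sign-definite})$ with $\alpha > 0$, or more likely that $\ddot I(t)$ is bounded above by a negative constant times something nonnegative plus a multiple of $H(u_0)$. Given the hypothesis $H(u_0) \geq 0$ and $u_0 \neq 0$, I would expect to conclude $\ddot I(t) \leq -c < 0$ for all $t$, for some constant $c$ depending only on the conserved quantities (using that $M(u_0) > 0$ strictly since $u_0 \neq 0$). But $I(t) = \int x^2 u^2\,dx \geq 0$ is a nonnegative quantity, and if its second derivative is bounded above by a negative constant on all of $[0,\infty)$ then $I(t) \to -\infty$, a contradiction. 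Hence no such global solution can exist.

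The main obstacle I anticipate is two-fold. First, justifying the integrations by parts and the manipulations at the level of hydrodynamic solutions rather than smooth classical solutions: one only has $u \in C^1$ and $u^2 \in C([0,\infty);C^3)$, so the expression $u(uu_x)_x$ need not individually make classical sense near the support boundary, and one must carefully rewrite everything in terms of $b = \frac12(u^2)_{xx} + u^2$ and $w = u^2$, for which the regularity is available — this is why the statement is phrased in the hydrodynamic formulation and why the paper emphasizes that $b \in C([0,T];C^1)$. Second, getting the algebra of the virial identity exactly right: tracking the constants through two substitutions of the equation and multiple integrations by parts, and confirming that the sign works out in the claimed direction (and checking it also covers $\mu = -1$, as the paper advertises, presumably because the $\mu u^4$ term contributes with a favorable sign there too, or is simply dropped). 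Once the identity $\ddot I(t) = (\text{constant depending on } H, M)$ is established with the right sign, the contradiction with $I \geq 0$ is immediate; the content is entirely in deriving that identity rigorously for hydrodynamic solutions.
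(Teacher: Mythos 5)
Your instinct to run a virial argument is correct, and you correctly identify the hydrodynamic formulation as the right framework for justifying the manipulations, but you have chosen the wrong virial quantity. You propose the NLS-style second moment $I(t) = \int x^2 u^2\,dx$ and hope that $\ddot I(t)$ closes into conserved quantities. For a third-order dispersive equation this does not happen. A direct computation gives
\begin{equation*}
\dot I(t) = -10 \int x\,(uu_x)^2\,dx + 3\mu \int x\,u^4\,dx,
\end{equation*}
which still carries a factor of $x$ in every term: the single integration by parts used to pass the $x$-derivative from the flux onto the weight only removes one power of $x$ from the $x^2$ weight, so the right-hand side is a sum of \emph{first moments} of densities that are not conserved. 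Differentiating once more, one substitutes $u_t$ again and picks up yet higher derivatives; the extra power of $x$ persists through those integrations by parts, and $\ddot I$ does not reduce to a combination of $H$ and $M$. The Glassey-type scheme "second moment, two derivatives" is tuned to the Schr\"odinger scaling $u_t \sim \partial_x^2$; here $u_t \sim \partial_x^3(\text{quadratic})$, and the correct conjugate quantity is the \emph{first} moment $\int x\,u^2\,dx$, differentiated once.

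That is what the paper does: pairing the equation with $xu$ and integrating by parts (all boundary terms vanish by the support hypothesis and the $C^1/C^3$ regularity of $u$ and $u^2$) yields
\begin{equation*}
\frac12 \frac{d}{dt}\int x\,u^2\,dx = -\frac52\int |uu_x|^2\,dx + \frac{3\mu}{4}\int u^4\,dx,
\end{equation*}
and in the focusing case $\mu = 1$ the conserved Hamiltonian lets one rewrite this as
\begin{equation*}
\frac12 \frac{d}{dt}\int x\,u^2\,dx = -\int |uu_x|^2\,dx - 3H(u_0).
\end{equation*}
The conclusion is then as you anticipate, but at the level of the \emph{first} derivative: $\int x\,u^2\,dx$ is uniformly bounded by $M\cdot M(u_0)$ thanks to the compact support and mass conservation, while the right-hand side is $\leq -3H(u_0) \leq 0$, and moreover $\int |uu_x|^2\,dx$ is bounded away from zero by a Poincar\'e inequality on $[-M,M]$ together with $\int u^2 = M(u_0) > 0$. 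So $\int x\,u^2\,dx$ decreases at a rate bounded away from zero, contradicting its boundedness. Your plan would need this replacement of the second-moment, second-derivative scheme by the first-moment, first-derivative one to go through; everything else in your outline — the use of the hydrodynamic regularity, the role of $H(u_0)\geq 0$ and $u_0 \neq 0$, the boundedness-from-support contradiction — carries over once that change is made.
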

\begin{proof} Pairing~\eqref{QdV} with $xu$ and integrating by parts leads to the identity
$$
\frac{1}{2} \frac{d}{dt} \int x u^2 \,dx = - \frac{5}{2} \int |uu_x|^2\,dx - \frac{3}{4} \int |u|^4\,dx.
$$
Using the conservation of the Hamiltonian, this becomes
$$
\frac{1}{2} \frac{d}{dt} \int x u^2 \,dx = - \int |uu_x|^2\,dx - 3 H(u_0).
$$
Combined with the conservation of the mass $M$, this identity leads to a contradiction under the hypothesis~\eqref{annashummingbird}. \end{proof}

\subsection{The defocusing case $\mu = -1$}

\begin{lem}
\label{plover2}
Assume that $u_0 \geq 0$ or $u_0 \leq 0$. Then there does not exist a globally defined hydrodynamic solution $u \in C^1 ([0,\infty) \times \mathbb{R})$, \(u^2\in C([0,\infty);C^3(\R))\), and a real number $M > 0$ such that
\begin{equation}
\label{annashummingbird2}
\forall t \in \mathbb{R}, \qquad \supp u(t,\cdot) \subset [-M,M].
\end{equation}
\end{lem}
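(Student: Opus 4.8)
The defocusing Hamiltonian $H(u)=\frac12\int |uu_x|^2\,dx+\frac14\int u^4\,dx$ is a sum of non-negative terms, so, in contrast with the focusing case, energy conservation carries no useful coercivity; the virial identity obtained by pairing with $xu$ is likewise not sign-definite. The plan is therefore to use a \emph{lower order} virial identity, pairing \eqref{QdV} against the multiplier $x$ rather than $xu$, and to exploit instead the positivity of $u$ together with the conservation of mass. Since $-u$ solves \eqref{QdV} whenever $u$ does, we may assume $u_0\geq 0$, hence $u(t,\cdot)\geq 0$ for all $t$; we may also assume $u_0\not\equiv 0$, since otherwise $u\equiv 0$ and there is nothing to prove.

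First I would record the virial identity: pairing \eqref{QdV} with $x$ and integrating by parts — all boundary terms vanish because $u(t,\cdot)$ is supported in $[-M,M]$, and the regularity $u\in C^1$, $u^2\in C^3$ amply justifies the single integration by parts involved — gives, in the case $\mu=-1$,
\[
\frac{d}{dt}\int x\,u\,dx \;=\; -\int u\,u_x^2\,dx \;-\;\int u^3\,dx.
\]
Both integrals on the right are non-negative because $u\geq 0$, so in particular $\frac{d}{dt}\int x u\,dx\leq -\int u^3\,dx$.

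Next I would bound $\int u^3\,dx$ below, uniformly in $t$. Here the hypothesis \eqref{annashummingbird2} that the support stays in the fixed interval $[-M,M]$ is crucial: Hölder's inequality with exponents $\tfrac32$ and $3$ gives $\int u^2\,dx=\int_{-M}^{M}u^2\cdot 1\,dx\leq\big(\int u^3\,dx\big)^{2/3}(2M)^{1/3}$, and since the mass $\int u^2\,dx=\|u_0\|_{L^2}^2>0$ is conserved this yields $\int u^3\,dx\geq c_0:=\|u_0\|_{L^2}^3/\sqrt{2M}>0$ for all $t\geq 0$. Combining with the previous step, $\frac{d}{dt}\int x u\,dx\leq -c_0$, so $\int x u\,dx\leq \int x u_0\,dx - c_0 t\to -\infty$ as $t\to\infty$. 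On the other hand, conservation of the momentum $\int u\,dx$ together with the support bound force $\big|\int x u\,dx\big|\leq M\int u\,dx = M\int u_0\,dx<\infty$ for every $t$, a contradiction. This proves the lemma.

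I do not expect a genuine obstacle here: the argument is a routine virial computation once the correct multiplier is identified. The one point to be careful about is justifying the integration by parts at the available regularity — this is immediate from the fixed compact support, exactly as in the focusing case — and the conceptual point worth emphasizing is that positivity renders the lower order virial monotone while the fixed support together with mass conservation keeps $\int u^3\,dx$ bounded away from zero, so that the virial defect grows at least linearly in time.
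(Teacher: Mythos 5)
Your argument is correct and is essentially the same as the paper's: you derive the identity $\frac{d}{dt}\int x u\,dx = -\int u\,u_x^2\,dx + \mu\int u^3\,dx$, use $u\geq 0$ to conclude the right-hand side is $\leq -\int u^3\,dx$ when $\mu=-1$, bound $\int u^3$ below via H\"older on the fixed support, and contradict the boundedness of $\int xu$ from the support hypothesis. The only cosmetic difference is that you feed mass conservation $\int u^2$ into the H\"older step, whereas the paper's (terse) proof invokes only momentum conservation $J=\int u$, which works just as well via $J\leq (2M)^{2/3}\bigl(\int u^3\bigr)^{1/3}$ and is arguably more economical since $J$ also supplies the upper bound $\lvert \int xu\rvert\leq MJ$; either choice completes the argument.
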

\begin{proof}
Multiplying\eqref{QdV} by $x$ and integrating leads to the identity
$$
\frac{d}{dt} \int x u \,dx = - \int u |u_x|^2\,dx + \mu \int u^3 \,dx.
$$
Combined with the conservation of the momentum $J$, this identity leads to a contradiction under the hypothesis~\eqref{annashummingbird2}.
\end{proof}

\subsection{The neutral case $\mu = 0$}

In the neutral case, both Lemma~\ref{plover1} and Lemma~\ref{plover2} hold true.

\begin{appendix}

\section{Coordinate changes}
\subsection{The equation for $Z$ in $x$ coordinates}

Let us start with the hydrodynamic formulation
$$
u_t + (bu)_x = 0, \qquad \mbox{where} \qquad b = \frac12(u^2)_{xx} + \mu u^2.
$$
Denote $X(t,x)$ the Lagrangian map defined by
$$
\left\{
\begin{array}{l}
X_t(t,x) = b(t,X(t,x)) \\
X(t=0,x) = x.
\end{array}
\right.
$$
Differentiating the above in $x$ and letting $Z(t,x) = \frac{1}{X_x(t,x)} - 1$ leads to
$$
Z_t(t,x) = - (Z(t,x)+1)b_x(t,X(t,x)).
$$
We observe that $u(t,X(t,x)) = (Z(t,x) +1) u_0(x)$ and hence, denoting $Y(t,\cdot)$ the inverse map to $X(t,\cdot)$,
$$
u^2(t,x) = \left[ (Z(t,\cdot) + 1)^2 u_0^2 \right] (Y(t,x)).
$$
We further observe that, for any function $F$,
$$
\partial_x [ F(t,Y(t,x))] = \left[ (1 + Z(t,\cdot)) \partial_y F(t,\cdot) \right] (Y(t,x)).
$$
Introducing the notation $\rho = (u_0)^2$, this leads to
$$
Z_t + (1 + Z)^2\left(\frac12(1 + Z)\left((1 + Z)\left((1 + Z)^2\rho\right)_x\right)_x + \mu (1 + Z)^2\rho\right)_x = 0,
$$
which becomes after expanding
\begin{equation}\label{ZeqnFull}
\begin{aligned}
&Z_t + \rho(1 + Z)^5Z_{xxx} + \frac72\rho_x(1 + Z)^5Z_{xx} + 7\rho(1 + Z)^4Z_xZ_{xx}\\
&\quad + \frac92\rho_{xx}(1 + Z)^5Z_x + \frac{19}2\rho_x(1 + Z)^4Z_x^2 + 4\rho(1 + Z)^3Z_x^3\\
&\quad + \frac12\rho_{xxx}(1 + Z)^6 + \mu \rho_x(1 + Z)^4 + 2\mu \rho(1 + Z)^3Z_x = 0.
\end{aligned}
\end{equation}

\subsection{The equation for $Z$ in $y$ coordinates}
Making the change of variables 
$$
y(x) = \int_0^x \frac{ds}{\rho(s)^{1/3}},
$$
we have
\begin{gather*}
\partial_x = \rho^{-\frac13}\partial_y,\qquad \partial_x^2 = \rho^{-\frac23}\left(\partial_y^2 - \frac13\frac{\rho_y}\rho\partial_y\right),\\
\partial_x^3 = \rho^{-1}\left(\partial_y^3 - \frac{\rho_y}\rho\partial_y^2 + \frac59\frac{\rho_y^2}{\rho^2} \partial_y - \frac13\frac{\rho_{yy}}\rho\partial_y\right),
\end{gather*}
and therefore the equation becomes
\begin{equation}\label{FlattenedFull}
\begin{aligned}
&Z_t + (1 + Z)^5Z_{yyy} + \frac52\frac{\rho_y}{\rho}(1 + Z)^5 Z_{yy} + 7(1 + Z)^4Z_yZ_{yy}\\
&\quad - \frac{19}9\frac{\rho_y^2}{\rho^2}(1 + Z)^5Z_y + \frac{25}6\frac{\rho_{yy}}{\rho}(1 + Z)^5Z_y + \frac{43}6\frac{\rho_y}{\rho}(1 + Z)^4Z_y^2 + 4(1 + Z)^3Z_y^3\\
&\quad + \frac12\left(\frac{\rho_{yyy}}{\rho} - \frac43\frac{\rho_y\rho_{yy}}{\rho^2} + \frac 59\frac{\rho_y^3}{\rho^3}\right)(1 + Z)^6 + \mu \rho^{\frac23}\left(\frac{\rho_y}{\rho}(1 + Z)^4  + 2(1 + Z)^3Z_y\right) = 0.
\end{aligned}
\end{equation}
As a consequence, we obtain the equation
\begin{equation*}
Z_t + (1 + Z)^5Z_{yyy} + \frac52\frac{\rho_y}\rho(1 + Z)^5Z_{yy} + 7(1 + Z)^4Z_yZ_{yy} + R(y,Z,Z_y) + F = 0,
\end{equation*}
where
$$
F = \frac12\left(\frac{\rho_{yyy}}{\rho} - \frac43\frac{\rho_{yy}\rho_y}{\rho^2} + \frac 59\frac{\rho_y^3}{\rho^3}\right) + \mu \frac{\rho_y}{\rho^{\frac13}}.
$$
and
\begin{equation}\label{Rdefn}
\begin{aligned}
R(y,Z,Z_y) &= - \frac{19}9\frac{\rho_y^2}{\rho^2}(1 + Z)^5Z_y + \frac{25}6\frac{\rho_{yy}}{\rho}(1 + Z)^5Z_y + \frac{43}6\frac{\rho_y}{\rho}(1 + Z)^4Z_y^2 + 4(1 + Z)^3Z_y^3\\
&\quad + \frac12\left(\frac{\rho_{yyy}}{\rho} - \frac43\frac{\rho_y\rho_{yy}}{\rho^2} + \frac 59\frac{\rho_y^3}{\rho^3}\right)\left((1 + Z)^6 - 1\right)\\
&\quad + \mu \rho^{\frac23}\left(\frac{\rho_y}{\rho}\left((1 + Z)^4 - 1\right)  + 2(1 + Z)^3Z_y\right).
\end{aligned}
\end{equation}

\subsection{The equation for $W$}
Writing \(Z = \rho^{-\frac56}W\), we obtain the expressions
\begin{align*}
Z_y &= \rho^{-\frac56}\left(W_y - \frac 56\frac{\rho_y}\rho W\right),\\
Z_{yy} &= \rho^{-\frac56}\left(W_{yy} - \frac53\frac{\rho_y}\rho W_y - \frac 56\frac{\rho_{yy}}\rho W + \frac{55}{36}\frac{\rho_y^2}{\rho^2}W\right),\\
Z_{yyy} &= \rho^{-\frac56}\left(W_{yyy} - \frac52 \frac{\rho_y}\rho W_{yy} - \frac52\frac{\rho_{yy}}\rho W_y + \frac{55}{12}\frac{\rho_y^2}{\rho^2}W_y - \frac 56\frac{\rho_{yyy}}\rho W + \frac{15}{12}\frac{\rho_y\rho_{yy}}{\rho^2}W - \frac{935}{216}\frac{\rho_y^3}{\rho^3}W\right).
\end{align*}
Plugging these into \eqref{FlattenedFull} leads to
\begin{equation*}
W_t + (1 + g)W_{yyy} + \frac75 g_yW_{yy} + N(y,W,W_y) + \rho^{\frac 56}F = 0,
\end{equation*}
where we define
$$
g = (1 + \rho^{-5/6} W)^5 - 1
$$
and
\begin{equation}\label{Ndefn}
\begin{aligned}
N(y,W,W_y) &=  - \frac52\frac{\rho_{yy}}\rho(1 + \rho^{-\frac56}W)^5 W_y + \frac5{12}\frac{\rho_y^2}{\rho^2}(1 + \rho^{-\frac56}W)^5W_y - \frac 56\frac{\rho_{yyy}}\rho (1 + \rho^{-\frac56}W)^5W\\
&\quad - \frac{55}{108}\frac{\rho_y^3}{\rho^3}(1 + \rho^{-\frac56}W)^5W - \frac {5}{6}\frac{\rho_y\rho_{yy}}{\rho^2}(1 + \rho^{-\frac56}W)^5 W\\
&\quad + 7(1 + \rho^{-\frac56}W)^4(\rho^{-\frac56}W)_y\left( - \frac53\frac{\rho_y}\rho W_y - \frac 56\frac{\rho_{yy}}\rho W + \frac{55}{36}\frac{\rho_y^2}{\rho^2}W\right)\\
&\quad + \rho^{\frac56}R\left(y,\rho^{-\frac56}W,(\rho^{-\frac56}W)_y\right),
\end{aligned}
\end{equation}
and finally \(R\) is defined as in \eqref{Rdefn}.
\end{appendix}

%%%%%%%%%%%%%%%%%%%%
%%% BIBLIOGRAPHY %%%
%%%%%%%%%%%%%%%%%%%%

\bibliographystyle{abbrv}
\bibliography{KdV}
\bigskip
\end{document}